\documentclass{article}

\usepackage[english]{babel}

\usepackage[letterpaper,top=2cm,bottom=2cm,left=3cm,right=3cm,marginparwidth=1.75cm]{geometry}
\usepackage[toc,page]{appendix}

\usepackage[algo2e,linesnumbered,vlined,ruled]{algorithm2e}

\usepackage{amsmath,amssymb,amsthm,color}
\usepackage{graphicx}
\usepackage{subcaption}
\usepackage{mdframed}
\usepackage{caption}
\usepackage{overpic}
\usepackage{titlesec}
\usepackage{tikz}
\usepackage[colorlinks=true, allcolors=blue]{hyperref}

\newtheorem{theorem}{Theorem}[section]
\newtheorem{lemma}[theorem]{Lemma}
\newtheorem{remark}[theorem]{Remark}

\newtheorem{proposition}[theorem]{Proposition}
\newtheorem{example}[theorem]{Example}
\newtheorem{definition}[theorem]{Definition}
\newtheorem{assumption}[theorem]{Assumption}
\newtheorem*{remark*}{Remark}

\usepackage{algorithm}
\usepackage{algpseudocode}
\usepackage{verbatim}
\usepackage{xcolor}
\usepackage{mathrsfs}

\newcommand{\cX}{\mathcal{X}}
\newcommand{\cY}{\mathcal{Y}} 
\newcommand{\cZ}{\mathcal{Z}} 

\newcommand{\cA}{\mathcal{A}}
\newcommand{\cB}{\mathcal{B}}
\newcommand{\cG}{\mathcal{G}}

\newcommand{\cF}{\mathcal{F}}

\newcommand{\cO}{\mathcal{O}}

\newcommand{\bL}{\mathbb{L}}

\newcommand{\cT}{\mathcal{T}}

\newcommand{\cP}{\mathcal{P}}

\newcommand{\ctdb}{\mathcal{T}^\delta_{\beta_t}}
\newcommand{\ctdbi}{\mathcal{T}^\delta_{\beta_i^t}}

\newcommand{\git}{g_{i}^t}
\newcommand{\gitm}{g_{i}^{t-1}}
\newcommand{\gite}{g_{i}^{t+1}}
\newcommand{\bGt}{\mathbf{G}^t}
\newcommand{\bGte}{\mathbf{G}^{t+1}}

\newcommand{\bxt}{\mathbf{x}^t}
\newcommand{\bvt}{\mathbf{v}^t}
\newcommand{\bvte}{\mathbf{v}^{t+1}}
\newcommand{\bxte}{\mathbf{x}^{t+1}}
\newcommand{\byt}{\mathbf{y}^t}
\newcommand{\byte}{\mathbf{y}^{t+1}}

\newcommand{\xit}{x_{i}^{t}}
\newcommand{\xitm}{x_{i}^{t-1}}
\newcommand{\xite}{x_{i}^{t+1}}

\newcommand{\yit}{y_{i}^{t}}

\newcommand{\bbxt}{\bar{\mathbf{x}}^t}
\newcommand{\bbxte}{\bar{\mathbf{x}}^{t+1}}

\newcommand{\bbyt}{\bar{\mathbf{y}}^t}
\newcommand{\bbyte}{\bar{\mathbf{y}}^{t+1}}

\newcommand{\bbvte}{\bar{\mathbf{v}}^{t+1}} 
\newcommand{\bbvt}{\bar{\mathbf{v}}^{t}}

\newcommand{\RR}{\mathbb{R}}
\newcommand{\EE}{\mathbb{E}}

\newcommand{\st}{\mathrm{s.t.}}

\newcommand{\bx}{\mathbf{x}}

\newcommand{\bW}{\mathbf{W}}

\newcommand{\bone}{\mathbf{1}}
\newcommand{\lnorm}{\left\|}
\newcommand{\rnorm}{\right\|}

\newcommand{\fij}{f_{i,j}}

\newcommand\keywords[1]{\textbf{Keywords}: #1}

\title{Clipped Stochastic Gradient Tracking For\\  Locally Smooth Functions}
\author{Leilei Mei\thanks{National University of Singapore, Singapore. Email: e0974170@u.nus.edu},   
Junyu Zhang\thanks{National University of Singapore, Singapore. Email: junyuz@nus.edu.sg}}

\date{}

\begin{document}

\maketitle

\begin{abstract}
Most stochastic gradient tracking (GT) methods adopt pre-scheduled stepsize rules, while a few recent works studied adaptive stepsizes that attempt to respond to the problem's local landscape. These methods are typically built upon the problem's global smoothness constant in both analysis and implementation, even for the adaptive ones. On the one hand, for many problems the local smoothness constant may vary drastically across the domain, and sometimes even unbounded, using the global upper bound of the local constants is too conservative. On the other hand, drastic stepsize changes can cause difficulties in the analysis of convergence and consensus of distributed algorithms, making the direct use of local smoothness constants risky and theoretically challenging. In this paper, we propose a \emph{Relative Uniform Continuity} (RUC) regularity condition for the local smoothness constant as a function of sets. The RUC condition covers most common growth functions for local smoothness constant, ranging from constant and logarithmic to polynomial and even exponential. For RUC-regular distributed optimization problems with finite-sum structure, we derive a clipped gradient tracking method with staggered variance reduction, which only relies on the local smoothness of objective functions, and an  $\cO(\sum_in_i^{1.5}+n_i^{0.5}\epsilon^{-1})$ complexity has been established for our algorithm. 
\end{abstract}

\keywords{Distributed Optimization, Gradient Clipping, Non-Lipschitz-Smooth}

\section{Introduction}
\label{section:Intro}
Distributed optimization is a key of large network systems in many signal processing, control, and machine learning applications \cite{nedic2017achieving,olfati2007consensus,chen2012diffusion,ling2012decentralized}, where people aim to solve the following formulation: 
\begin{eqnarray}
    \label{prob:consensus}
    \min_{x_1,\cdots,x_m}\,\, \frac{1}{m}\sum_{i=1}^{m} f_i(x_i)\qquad \st \qquad x_i = x_j, \forall\, i,j\in[m].
\end{eqnarray}
Efficient classic algorithms for this problem include the distributed gradient descent (DGD)   \cite{nedic2009distributed,nedic10distributed,yuan2016convergence,zeng2018nonconvex}, gradient tracking (GT)  \cite{nedic2017achieving,pu2021distributed,koloskova2021improved}, Push-Pull \cite{nedic2009distributed,li2019decentralized,shi2015extra,qu2017harnessing}, the primal-dual (PD) methods \cite{pesquet2014class,zhang2017distributed}, as well as their numerous  variants \cite{shi2015extra,hong2017prox,tang2018d,lu2019gnsd,sun2019distributed,xin2020general,pu2021distributed,sun2022distributed}, to name a few.  In this paper, we are specifically interested in the stochastic distributed algorithms for the finite-sum setting: 
\begin{equation}
    \label{prob:main}
    \min_{x\in\RR^d} \frac{1}{m}\sum_{i=1}^m f_i(x)\qquad\mbox{with}\qquad f_i(x) = \frac{1}{n_i}\sum_{j=1}^{n_i} \fij(x), \quad i\in[m], 
\end{equation}
where $n_i$ stands for the local data size of agent $i$, $[m]:=\{1,\cdots,m\},$ and the local objective function $f_i$'s are differentiable but possibly nonconvex.
Under the finite-sum setting, the predominant results are the distributed stochastic variance reduced first-order methods, including \cite[etc.]{sun2020improving,xin2020general,xin2020variance,xin2020near,xin2021fast,jiang2022distributed}, most of which are based on gradient tracking due to its ability to simultaneously obtain consensus in iterates and gradient estimations. By adopting variance reduction technique, they behave significantly better than vanilla stochastic approximation variants of GT and DGD, such as \cite{tang2018d,lu2019gnsd,pu2021distributed}.

In this paper, we are particularly interested in the problem setting where, instead of assuming each $f_i$ to be globally $L$-smooth, we only require the knowledge of a set function $\bL(\cX)$ that returns an upper bound of $f_i$'s local smoothness constant in some set $\cX$, which does not exclude the extreme case $\bL(\mathbb{R}^d)=+\infty$ that corresponds to problems that are not globally Lipschitz-smooth.

Our main motivation for considering local smoothness is that for many optimization problems, the local smoothness constants can vary drastically across the problem domain. One typical example is the so-called $\alpha$-generalized smoothness condition, which assumes the training loss $f(x)$ to satisfy $\|\nabla^2 f(x)\| \leq L_0 + L_1\|\nabla f(x)\|^\alpha.$ It holds with $\alpha = 1$ for deep models in natural language processing (NLP), computer vision (CV) tasks \cite{zhang2019gradient}, and distributed robust optimization (DRO) tasks \cite{jin2021non}. It also holds with $\alpha = 2/3$ for phase retrieval problem \cite{chen2023generalized}, and it is shown in \cite{li2023convex} that the self-concordant functions \cite{nesterov2013introductory} satisfy this condition with $\alpha=2$. Note that for large problems, the initial gradient size is often large if no good guess of near-stationary initialization is provided, e.g., in our experiments, the initial gradient can be as large as $10^5$. This brings large variation to local smoothness constants during problem solving. In addition, for the hard problem instances considered under the relative smooth optimization topic \cite{bolte2018first,lu2018relatively}, where $\|\nabla^2f(x)\|\leq L\|\nabla^2h(x)\|$ for some general strictly convex but non-Lipschitz-smooth distance generating function $h$, the local smoothness constants can explode when the iterations approach the boundary $\partial\mathrm{dom}(h)$. In these problems, the global smoothness constant either does not exist or 
can be several orders greater than the local constants. According to the above discussion, it is crucial for an algorithm to adapt to a problem's local smoothness properties if the problem has drastically varying local smoothness constants. 

Note that the state-of-the-art variance reduced algorithms \cite[etc.]{sun2020improving,xin2020general,xin2020near,xin2020variance,xin2021fast,jiang2022distributed} in the existing results all work under the classic setting where a global smoothness constant is assumed and used throughout the design and analysis of the algorithm.
Therefore, a straightforward idea is  replacing the global smoothness constant by its local counterparts in each epoch of the variance reduced methods. Unfortunately, such a direct modification may not necessarily lead to a convergent method, which is mainly due to the technical intricacies of GT-based variance reduction methods in balancing consensus and convergence, as well as designing a descending Lyapunov functions that governs the algorithm's convergence behavior. 

To resolve this gap, we introduce a new \emph{Relative Uniform Continuous} (RUC) regularity condition for the local
smoothness constant. Under this condition, for any $\epsilon>0$, there exists a $\delta>0$ so that the \emph{relative difference} between the local smoothness constants satisfies
$$\left|1-\max\left\{\frac{\mathbb{L}(\cX)}{\mathbb{L}(\cY)},\frac{\mathbb{L}(\cY)}{\mathbb{L}(\cX)}\right\}\right| < \epsilon \qquad \forall d_\mathrm{H}(\cX,\cY)\leq \delta,$$ 
where $d_\mathrm{H}(\cdot,\cdot)$ stands for the Hausdorff distance of the two sets. We should notice that the above relative difference bound is much milder than requiring the absolute value difference $|\mathbb{L}(\cX)-\mathbb{L}(\cY)|$ to be small in our interested problem setting, where both $\mathbb{L}(\cX)$ and $\mathbb{L}(\cY)$ are large numbers.  

In this paper, we show that this condition covers a lot of common growth
functions $\mathbb{L}(\cdot)$ for local smoothness constants, ranging from constant and logarithmic to polynomial and even exponential. In particular, the $\alpha$-generalized smoothness condition with $\alpha\in(0,1]$, and the relative smoothness condition with appropriate kernels can also be covered. Moreover, we show that this regularity condition is very versatile and robust and it remains closed under various operations on the function $\mathbb{L}(\cdot)$, such as affine composition, scaling, linear combination, and point-wise maximum or minimum, making it convenient to construct or check new growth function for local smoothness constant by manipulating the known RUC-regular functions.

If the objective function of problem \eqref{prob:main} is RUC-regular, then it is implied that the local smoothness constants do not change so fast in a \emph{relative} sense, even though they may still vary drastically in the \emph{absolute} sense. To utilize this property, we consider a clipped gradient tracking technique that clips the update when it is too aggressive while preserving the update if it is in a controllable range. Based on this technique, we design a staggered variance reduced algorithms with clipped gradient tracking to solve problem \eqref{prob:main}, which allows a staggered, or asynchronous, computation of the check points, as well as the restart of epochs, among the distributed agents. For this method, a tight complexity has been established for our method, which can reduce to the classic $O(\sqrt{n}/\epsilon)$ complexity for finding an $\epsilon$-stationary point in the classic setting where the problem is globally Lipschitz-smooth. 
 \vspace{0.3cm}

\noindent\textbf{\large Related literature.}$\,\,$ There is a rich bulk of literature in distributed optimization. Here, we only review the most related works that focus on the stochastic gradient tracking methods, the variance reduction techniques, the gradient clipping, as well as the locally smooth optimization results. 

The gradient tracking method was originally proposed in \cite{nedic2017achieving} for convex problems, and has been extended to the nonconvex and stochastic setting in \cite{zhang2019decentralized,lu2019gnsd,koloskova2021improved,pu2021distributed}, and they significantly outperform their DGD counterparts, whose convergence requires additional regularity condition such as uniformly bounded difference between global and local gradients \cite{lian2017can,assran2019stochastic}. Moreover, for finite-sum problems like \eqref{prob:main}, the ability of GT to track the average gradient facilitate its combination with the stochastic variance reduction technique \cite{johnson2013accelerating,defazio2014saga,pham2020proxsarah,cutkosky2019momentum}, leading to improved complexity results for both convex and nonconvex settings, see e.g. \cite{li2020communication,sun2020improving,xin2020general,xin2020variance,xin2021fast,jiang2022distributed}. For our problem setting where the local smoothness constants can vary drastically or even unbounded, directly combining GT and variance reduction may not work, and we need to incorporate a gradient clipping technique that is typically used in  neural networks training to prevent the issue of exploding gradients by constraining the gradient to some specific threshold, and has been theoretically analyzed under the generalized smoothness condition \cite[etc.]{zhang2019gradient,koloskova2023revisiting,li2023convex}, where $\epsilon$-small clipping threshold is often adopted, leading to slow conservative updates. The gradient clipping technique has also been applied to the policy gradient analysis in reinforcement learning to prevent the exploding importance weights \cite{zhang2021convergence}. In this work, the gradient clipping will function in a different way in order to improve consensus and to facilitate the use of the RUC-regularity condition, and we only require a constant level clipping threshold. Finally, in terms of the algorithms that exploit the local smoothness, \cite{lu2023accelerated} and \cite{lu2024primal} consider the strongly convex optimization and strongly monotone variational inequality, respectively, and \cite{zhang2024first,hong2020diverge} consider the unconstrained nonconvex optimization problem. Besides the above case-by-case literature, a large class results exists under the topic of relative-smoothness \cite{lu2018relatively} or smooth-adaptable \cite{bauschke2017descent} optimization problems, where a carefully designed kernel function $h$ and the induced Bregman divergence are used to automatically exploit the local smoothness via mirror descent updates, see e.g. \cite{bolte2018first,teboulle2018simplified}. However, as the nonlinear mirror maps $\nabla h$ and $\nabla h^*$ can cause consensus difficulties, we will not adopt this approach.\vspace{0.25cm}

\noindent\textbf{\large Organization.}$\,\,$
The remainder of this paper is organized as follows. In Section~\ref{section:RUC-regularity}, we introduce the concept of relative uniform continuous regularity and its relevance to our problem setting. Section~\ref{section:CGTVR} presents the proposed clipped gradient tracking algorithm with synchronous and staggered variance reduction. Section~\ref{section:cvg-analysis} is devoted to the convergence and complexity analysis of the proposed \texttt{CGTVR-stag} method. Finally, in Section~\ref{section:numerical}, we provide extensive numerical experiments that demonstrate the practical performance of our approach. 

\vspace{0.25cm}

\noindent\textbf{\large Notation.}$\,\,$ If not specifically mentioned, $\|\cdot\|$ will always be defaulted as $\ell_2$ norm for vectors and spectral norm for matrices without causing confusion. For any matrix $A\in\RR^{m\times d}$, we use $\|A\|_F$ to denote the Frobenius norm of $A$. Then it holds that $\|AB\|_F\leq \|A\|\|B\|_F$ for any two matrices $A,B$ of compatible sizes. We use $\|A\|_{2,\infty}$ to denote its $\ell_2$-$\ell_\infty$ norm defined by $\|A\|_{2,\infty} = \max_{i}\|A_{i,:}\|$, where $A_{i,:}$ stands for the $i$-th row of matrix $A$. For any vector $v$, the $\ell_2$-$\ell_\infty$ norm satisfies $\|A^\top v\|\leq \|A\|_{2,\infty}\cdot\|v\|_1$, where $\|v\|_1:=\sum_i |v_i|$ denotes the $\ell_1$ norm of $v$. We use $B(x, r)$ to denote a ball centered at $x$ with radius $r$. We use $\mathbf{1}$ to denote all-one vector. We use $[x,y]$ to denote the line segment between two points  $x,y\in\RR^d.$ We denote $[m] := \{1, 2, \cdots, m\}.$

\section{Relative uniform continuous regularity condition}
\label{section:RUC-regularity}
As discussed in Section \ref{section:Intro}, we need to regularize the variation of the local Lipschitz constant $\bL(\cdot)$ so as to balance the consensus error and descent progress. Denote $\mathcal{P}(\mathbb{R}^d)$ the family of all nonempty compact subsets 
of $\RR^d$, then the local smoothness constant of some objective function $\bL :\mathcal{P}(\RR^d)\to\RR_{++}$ is a positive-valued set function that maps any closed set $\cX\subseteq\RR^d$ to $\bL(\cX)$, (an upper bound of) the function's local smoothness constant on $\cX$. We introduce a relative uniform continuous (RUC) regularity condition for the local Lipschitz constant as a set function of the considered neighborhood. To start with, for any two compact convex sets $\cX,\cY\subseteq \RR^d$, we adopt the Hausdorff distance to measure their difference: 
$$d_{\rm H}(\cX,\cY):=\max\left\{\max_{x\in\cX}\min_{y\in\cY}\|x-y\|\,,\,\max_{y\in\cY}\min_{x\in\cX}\|x-y\|\right\},$$
see in \cite{birsan2006one}, which is different from the standard definition of the distance between two sets. We should note that the norm here is not necessarily the $\ell_2$-norm. When the variables are matrices, the norm $\|\cdot\|$ can be the spectral norm instead of the Frobenius norm. Based on this concept, we introduce the RUC regularity condition as follows.

\begin{definition}[RUC regularity]
\label{defn:RUC}
We say a positive-valued set function $F:\mathcal{P}(\mathbb{R}^d)\to\RR_{++}$ is relative uniform continuous, if $\forall\epsilon>0$, $\exists\delta >0$ s.t. the  relative function value difference satisfies  
$$\left|1-\max\left\{\frac{F(\cX)}{F(\cY)},\frac{F(\cY)}{F(\cX)}\right\}\right|\leq\epsilon
$$
for $\forall\cX,\cY\subseteq \mathcal{P}(\mathbb{R}^d)$ satisfying $d_{\rm H}(\cX,\cY)\leq \delta$.
\end{definition}
This condition can be viewed as a generalization of the classic uniform continuity concept to set functions and relative difference. Compared to the absolute difference, the use of relative difference allows a much more drastic change of function values. To illustrate this point, we can consider the counterpart of RUC for standard univariate function. Take the exponential function $f(x) = e^x$ for example. The standard uniform continuity property defined under the absolute difference requires:
$$\forall \epsilon>0, \exists \delta>0\quad \text{s.t.}\quad |f(x)-f(y)|\leq\epsilon\quad \text{for}\quad\forall |x-y|\leq \delta.$$
This property is clearly not satisfied by $e^x$ because $e^{x+\delta}-e^{x} = e^x(e^\delta-1)\to+\infty$ as $x\to+\infty$. 
That is, the uniform continuity under absolute function value difference is too restrictive and cannot describe the fast changing functions. However, if we switch to the relative difference for positive-valued functions, we have $|1-\max\{e^{x}/e^y,e^{y}/e^x\}| = e^{|x-y|}-1$, which implies that
$$\forall \epsilon>0, \exists \delta = \ln(1+\epsilon)>0 \quad\mbox{s.t.}\quad e^{|x-y|}-1<\epsilon\quad\mbox{for}\quad\forall |x-y|<\delta.$$
Hence $e^x$ is a univariate RUC function. By extending this observation to a general positive-valued set function 
$F:\mathcal{P}(\mathbb{R}^d)\to\mathbb{R}_{++}$, and by replacing the Euclidean 
distance between points with the Hausdorff distance between compact sets, we 
arrive at Definition~\ref{defn:RUC}.

Denote $\cF_{\text{ruc}}^d$ the class of all RUC functions over $\mathcal{P}(\RR^d)$, then this function class is closed under various common operations, which is summarized as Proposition \ref{proposition:closed}.

\begin{proposition}[Closedness of $\cF_{\text{ruc}}^d$]
\label{proposition:closed}
The RUC function class $\cF_{\emph{ruc}}^d$ is closed under affine composition, positive linear combination, maximum/minimum, and multiplication/division operations:\vspace{0.1cm} \\
\textbf{\emph{(1).}}
Let $F \in \cF_{\mathrm{ruc}}^d$ and let 
$\mathcal{A}:\mathbb{R}^{d'} \to \mathbb{R}^d$ be globally Lipschitz continuous. That is,   $\exists L>0$ s.t.
\[
    \|\mathcal{A}(x) - \mathcal{A}(y)\|
    \le L \|x-y\|,
    \qquad \forall x,y \in \mathbb{R}^{d'}.
\]
For $\cX \in \cP(\mathbb{R}^{d'})$ define 
$\mathcal{A}(\cX) := \{\mathcal{A}(x): x\in\cX\}$ and
$G(\cX) := F(\mathcal{A}(\cX))$.
Then $G \in \cF_{\mathrm{ruc}}^{d'}$.\\
\textbf{\emph{(2)}.}  If $F,G\in\cF_{\emph{ruc}}^{d}$, then $\alpha F+\beta G\in\cF_{\emph{ruc}}^{d}$ for any $\alpha,\beta>0$.\vspace{0.1cm}\\
\textbf{\emph{(3)}.} If $F,G\in\cF_{\emph{ruc}}^{d}$, then both $ \max\{F,G\}\in\cF_{\emph{ruc}}^{d}$ and $\min\{F,G\}\in\cF_{\emph{ruc}}^{d}$.\vspace{0.1cm}\\
\textbf{\emph{(4)}.} If  $F,G\in\cF_{\emph{ruc}}^{d}$, then both $F\cdot G\in\cF_{\emph{ruc}}^{d}$ and $F/G\in\cF_{\emph{ruc}}^{d}$.
\end{proposition}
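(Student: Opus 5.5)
The plan is to first reformulate the RUC condition multiplicatively, and then treat each of the four items as a short consequence of this reformulation. Since $\max\{a,b\}\ge 1$ for $a=F(\cX)/F(\cY)$ and $b=1/a$, the quantity inside the absolute value is $\max\{a,1/a\}-1\ge 0$. Hence $F\in\cF_{\rm ruc}^d$ is equivalent to the following statement: for every $\epsilon>0$ there is $\delta>0$ such that
$$(1+\epsilon)^{-1}\le \frac{F(\cX)}{F(\cY)}\le 1+\epsilon\qquad\text{whenever } d_{\rm H}(\cX,\cY)\le\delta.$$
Equivalently, $|\ln F(\cX)-\ln F(\cY)|\le \ln(1+\epsilon)$. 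In this form, RUC is exactly uniform continuity of $\ln F$ with respect to $d_{\rm H}$ on $\cP(\RR^d)$. I will use this logarithmic form throughout.

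\textbf{(1)} The key step is to show that $d_{\rm H}(\cA(\cX),\cA(\cY))\le L\, d_{\rm H}(\cX,\cY)$. Take any $\cA(x)\in\cA(\cX)$ and pick $y\in\cY$ with $\|x-y\|\le d_{\rm H}(\cX,\cY)$. Then $\|\cA(x)-\cA(y)\|\le L\,d_{\rm H}(\cX,\cY)$, and the same argument works with the roles of $\cX$ and $\cY$ swapped. Two further checks are needed. First, $\cA(\cX)$ is nonempty and compact, because $\cA$ is continuous, so $G$ is well defined. Second, given $\epsilon$, if $\delta_F$ is the tolerance for $F$, then $\delta=\delta_F/L$ works for $G$.

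\textbf{(4)} This follows immediately from the log form. We have $\ln(FG)=\ln F+\ln G$ and $\ln(F/G)=\ln F-\ln G$, and sums and differences of uniformly continuous functions are uniformly continuous. Concretely, choose $\delta=\min\{\delta_F,\delta_G\}$ for tolerance $\epsilon/2$ on each logarithm.

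\textbf{(2)} Fix $\alpha,\beta>0$. If both ratios $F(\cX)/F(\cY)$ and $G(\cX)/G(\cY)$ lie in $[(1+\epsilon)^{-1},1+\epsilon]$, then
$$\frac{\alpha F(\cX)+\beta G(\cX)}{\alpha F(\cY)+\beta G(\cY)}\in\big[(1+\epsilon)^{-1},\,1+\epsilon\big].$$
This holds because $\alpha F(\cX)+\beta G(\cX)\le (1+\epsilon)(\alpha F(\cY)+\beta G(\cY))$, and the lower bound follows in the same way. Note that the positivity of $\alpha,\beta$ and of $F,G$ is essential here.

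\textbf{(3)} The argument is similar. Write $M_X=\max\{F(\cX),G(\cX)\}$ and assume, say, $M_X=F(\cX)$. Then
$$M_X=F(\cX)\le(1+\epsilon)F(\cY)\le(1+\epsilon)M_Y,$$
and the reverse inequality holds by symmetry. For the minimum, write $m_X=\min\{F(\cX),G(\cX)\}$ and assume $m_Y=F(\cY)$. Then $m_X\le F(\cX)\le (1+\epsilon)F(\cY)=(1+\epsilon)m_Y$, and again the reverse holds by symmetry. An alternative route is to use $\min\{F,G\}=FG/\max\{F,G\}$ together with (4).

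None of these steps is genuinely hard. The only place that requires care is (1). There one must verify the Hausdorff Lipschitz estimate, confirm that $\cA(\cX)\in\cP(\RR^d)$, and handle a possibly different norm on $\RR^{d'}$; the Lipschitz constant $L$ is taken with respect to the norms used in the two Hausdorff distances.
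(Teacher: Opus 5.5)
Your proof is correct and follows the paper's argument essentially step for step: the same Hausdorff--Lipschitz estimate $d_{\rm H}(\cA(\cX),\cA(\cY))\le L\,d_{\rm H}(\cX,\cY)$ with $\delta=\delta_F/L$ for (1), and the same $\delta=\min\{\delta_F,\delta_G\}$ ratio bounds plus a symmetry argument for (2) and (3). For (4), your logarithmic reformulation is the additive form of the paper's choice $(1+\epsilon)^2=1+\epsilon'$ and of its observation that $d^{\rm rel}_G=d^{\rm rel}_{1/G}$; it also carries out the product computation the paper omits, and your check that $\cA(\cX)$ is nonempty and compact fills a point the paper leaves implicit.
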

\begin{proof}
First, for ease of notation, let us denote relative difference as 
$$d_{F}^{\rm rel}(\cX,\cY): = \left|1-\max\left\{\frac{F(\cX)}{F(\cY)},\frac{F(\cY)}{F(\cX)}\right\}\right|.$$
Then it is not hard to observe that $d_{F}^{\rm rel}(\cX,\cY)\leq \epsilon$ if and only if  $\max\left\{\frac{F(\cX)}{F(\cY)},\frac{F(\cY)}{F(\cX)}\right\}\leq 1+\epsilon$. First, let us  prove (1). By the $L$-Lipschitz continuity of $\mathcal{A}$, the induced set mapping is also
$L$-Lipschitz continuous under the Hausdorff distance:
\[
    d_H(\mathcal{A}(\cX),\mathcal{A}(\cY))
    \le L\cdot d_H(\cX,\cY),
    \qquad \forall \cX,\cY \in \mathcal{P}(\mathbb{R}^{d'}).
\]
Indeed, for any $x\in\cX$ we can find $y\in\cY$ with $\|x-y\|\le d_H(\cX,\cY)$,
and the Lipschitz property of $\mathcal{A}$ yields
$\operatorname{dist}(\mathcal{A}(x),\mathcal{A}(\cY))
\le L\cdot d_H(\cX,\cY)$, then taking maximum over all $x\in\mathcal{X}$ proves the first half of bound on Hausdorff distance between $\cA(\cX)$ and $\cA(\cY)$; the other half  follows by symmetry.
Next, fix an arbitrary $\epsilon>0$.
Because $F\in\cF_{\mathrm{ruc}}^d$, there exists $\delta>0$ such that
$d_H(\cX,\cY)\le\delta$ implies
$d_F^{\mathrm{rel}}(\cX,\cY)\le\epsilon$ for all
$\cX,\cY\in\mathcal{P}(\mathbb{R}^d)$.
Let $\delta' := \delta/L$.
Then for any $\cX,\cY\in\mathcal{P}(\mathbb{R}^{d'})$ with
$d_H(\cX,\cY)\le\delta'$, we have
\[
    d_H(\mathcal{A}(\cX),\mathcal{A}(\cY))
    \le L\, d_H(\cX,\cY)
    \le L\delta' = \delta,
\]
and hence, by definition that $G(\cX):=F(\mathcal{A}(\cX))$, we obtain
\[
    d_G^{\mathrm{rel}}(\cX,\cY)
    = d_F^{\mathrm{rel}}(\mathcal{A}(\cX),\mathcal{A}(\cY))
    \le \epsilon,
\]
which shows $G\in\cF_{\mathrm{ruc}}^{d'}$.  This completes the proof of (1).

Next we prove (2). Because $F,G\in\cF_{\rm ruc}^{d}$, for any $\epsilon>0$, there exist $\delta_F,\delta_G>0$ s.t. 
$d_{F}^{\rm rel}(\cX,\cY)\leq\epsilon$ for any $d_{\rm H}(\cX,\cY)\leq \delta_F$, and $d_{G}^{\rm rel}(\cX,\cY)\leq\epsilon$ for any $d_{\rm H}(\cX,\cY)\leq \delta_G$. Taking $\delta = \min\{\delta_F,\delta_G\}$, the above argument indicates that 
$F(\cX)\leq (1+\epsilon)F(\cY)$ and $G(\cX)\leq (1+\epsilon)G(\cY)$ simultaneously hold for any $d_{\rm H}(\cX,\cY)\leq \delta$, consequently
$$\frac{\alpha F(\cX)+\beta G(\cX)}{\alpha F(\cY)+\beta G(\cY)}\leq 1+\epsilon.$$
By the symmetry between $\cX$ and $\cY$,   repeating the above discussion with interchanged $\cX$ and $\cY$ gives 
$$\max\left\{\frac{\alpha F(\cX)+\beta G(\cX)}{\alpha F(\cY)+\beta G(\cY)},\frac{\alpha F(\cY)+\beta G(\cY)}{\alpha F(\cX)+\beta G(\cX)}\right\}\leq 1+\epsilon,$$
which implies an $\epsilon$-small relative difference: $d_{\alpha F+\beta G}^{\rm rel}(\cX,\cY)\leq\epsilon$. Hence we complete  the proof of (2) and conclude that $\alpha F+\beta G\in \cF_{\rm ruc}^{d}$.  

Next, we prove (3). For any $\epsilon>0$, let us inherit the choice of $\delta$ from the proof of (2). For any $d_{\rm H}(\cX,\cY)\leq\delta$, we assume without loss of generality that $F(\cX)\geq G(\cX)$. Then we have  
$$\frac{\max\{F(\cX),G(\cX)\}}{\max\{F(\cY),G(\cY)\}}=\frac{F(\cX)}{\max\{F(\cY),G(\cY)\}}\overset{(i)}{\leq}\frac{F(\cX)}{F(\cY)}\leq1+\epsilon,$$ where (i) is because $F(\cY)\leq\max\{F(\cX),G(\cX)\}$. Similar to the proof of (2), we can further use the symmetry between $\cX$ and $\cY$ to show $d_{\max\{F,G\}}^{\rm rel}(\cX,\cY)\leq\epsilon$ and $\max\{F,G\}\in\cF_{\rm ruc}^{d}$.

For the minimum function, we assume without loss of generality that $F(\cY)\geq G(\cY)$, then  
$$\frac{\min\{F(\cX),G(\cX)\}}{\min\{F(\cY),G(\cY)\}} = \frac{\min\{F(\cX),G(\cX)\}}{G(\cY)}\overset{(ii)}{\leq}\frac{G(\cX)}{G(\cY)}\leq 1+\epsilon,$$
where (ii) is due to $G(\cX)\geq\min\{F(\cX),G(\cX)\}$. Analogous to the maximum case, we can again use the symmetry argument to show $d_{\min\{F,G\}}^{\rm rel}(\cX,\cY)\leq\epsilon$ and hence $\min\{F,G\}\in\cF_{\rm ruc}^{d}$.

Finally, we prove (4). Given any fixed $\epsilon'>0$, let us inherit the choice of $\delta$ from the proof of (2) with  $\epsilon$ satisfying 
$(1+\epsilon)^2 = 1+\epsilon'$. Then direct computation already implies that $F\cdot G\in\cF_{\rm ruc}^{d}$, and we omit this simple and sheer computation for succinctness. The division is also simple. Note that the symmetric definition of relative difference indicates that $d_{\min\{G\}}^{\rm rel}(\cX,\cY) \equiv d_{\min\{1/G\}}^{\rm rel}(\cX,\cY)$ for any $\cX',\cY'\in\mathcal{K}(\RR^{d'})$. Hence $G\in\cF_{\rm ruc}^{d}$ indicates that $1/G\in\cF_{\rm ruc}^{d}$. Then $F/G\in\cF_{\rm ruc}^{d}$ will hold by viewing the division between $F$ and $G$ as the multiplication between $F$ and $1/G$.
\end{proof}

Based on the closedness property, if we can provide some basic RUC functions in $\cF_{\rm ruc}^{d}$, then one can construct new RUC-regular functions or verify RUC-regularity by affine composition, scaling, linear combination, or taking their piecewise variants by minimum or maximum operations. We notice that for many problems, the local smoothness constant $\bL(\cdot)$ grows in a  point-wise maximum form $\bL(\cX):=\max\{l(x): x\in\cX\}$ for some function $l(\cdot)$. In the next proposition, we provide a general sufficient condition for $l$ to guarantee the RUC-regularity of $\bL(\cdot)$.  
\begin{proposition} 
\label{proposition:growth}
Let $f:\mathbb{R}^d\to \mathbb{R}_{++}$ be a positive valued function. If
$\log(f)$ is uniformly continuous, then the set function $F:\mathcal{P}(\mathbb{R}^d)\to\mathbb{R}_{++}$ defined by $F(\mathcal{X}) := \max\{f(x):x\in\mathcal{X}\}$
is RUC-regular.
\end{proposition}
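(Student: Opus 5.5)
Since $\log f$ is uniformly continuous, we can fix $\epsilon>0$ and choose $\delta>0$ so that $|\log f(x)-\log f(y)|\le \log(1+\epsilon)$ whenever $\|x-y\|\le\delta$. Equivalently, $f(x)\le(1+\epsilon)f(y)$ for all such pairs. The goal is to show that this same $\delta$ works for the set function $F$ in Definition~\ref{defn:RUC}.

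First, the maximum defining $F$ must be attained. Uniform continuity of $\log f$ makes $f=e^{\log f}$ continuous. Each $\cX\in\cP(\RR^d)$ is nonempty and compact, so $F(\cX)=\max_{x\in\cX}f(x)$ is attained at some $x^*\in\cX$ and $F(\cX)>0$.

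Next, take $\cX,\cY\in\cP(\RR^d)$ with $d_{\rm H}(\cX,\cY)\le\delta$. By the definition of the Hausdorff distance, $\min_{y\in\cY}\|x^*-y\|\le\delta$. Compactness of $\cY$ gives a point $y^*\in\cY$ attaining this minimum, so $\|x^*-y^*\|\le\delta$. Hence
$$F(\cX)=f(x^*)\le(1+\epsilon)f(y^*)\le(1+\epsilon)F(\cY).$$
Exchanging the roles of $\cX$ and $\cY$ gives $F(\cY)\le(1+\epsilon)F(\cX)$. Therefore
$$\max\left\{\frac{F(\cX)}{F(\cY)},\frac{F(\cY)}{F(\cX)}\right\}\le1+\epsilon.$$
As observed at the start of the proof of Proposition~\ref{proposition:closed}, this is equivalent to $d^{\rm rel}_F(\cX,\cY)\le\epsilon$, which is exactly RUC-regularity.

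The only delicate step is attainment: of the maximum defining $F$, and of a point of $\cY$ within $\delta$ of $x^*$. Both follow from compactness and continuity. Even without attainment, one could pick $y\in\cY$ with $\|x^*-y\|\le\delta'$ for some $\delta'<\delta$ slightly smaller than the one used in the definition, so the argument has no genuine obstacle.
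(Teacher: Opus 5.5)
Your proof is correct and follows the same route as the paper's: convert uniform continuity of $\log f$ into the multiplicative bound $f(x)\le(1+\epsilon)f(y)$ for $\|x-y\|\le\delta$, take a maximizer $x^*$ of $f$ on $\mathcal{X}$, match it to a point of $\mathcal{Y}$ within $\delta$, and then symmetrize. Your extra justification that the minimum distance from $x^*$ to $\mathcal{Y}$ is attained, by compactness of $\mathcal{Y}$, is a detail the paper passes over, but it does not change the argument.
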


\begin{proof}

Fix $\epsilon>0$ and set $\eta=\log(1+\epsilon)$. By uniform continuity of
$\log(f)$, there exists $\delta>0$ such that $|\log f(x)-\log f(y)| \le \eta$ for all $\|x-y\|\le \delta$.  
Exponentiating this inequality yields 
\begin{equation}\label{eq:ratio-bound}
\frac{f(x)}{f(y)} \le e^\eta = 1+\epsilon
\quad\text{and}\quad
\frac{f(y)}{f(x)} \le 1+\epsilon, \qquad \forall \|x-y\|\le\delta.
\end{equation} 
Now take any $\mathcal{X},\mathcal{Y}\in\mathcal{P}(\mathbb{R}^d)$ such that 
$d_{\mathrm{H}}(\mathcal{X},\mathcal{Y})\le \delta$. By compactness of $\mathcal{X}$ and 
continuity of $f$, we can find $x^*\in\mathcal{X}$ s.t.
$F(\mathcal{X})=f(x^{*})$. By $d_{\mathrm{H}}(\mathcal{X},\mathcal{Y})\le \delta$, we can find some $y\in\mathcal{Y}$ s.t. $\|x^{*}-y\|\le\delta$. Applying
\eqref{eq:ratio-bound} gives
$\frac{F(\cX)}{F(\cY)} \leq \frac{f(x^*)}{f(y)} \leq 1+\epsilon$. Interchanging
$\mathcal{X}$ and $\mathcal{Y}$, we also obtain
$F(\mathcal{Y})/F(\mathcal{X})\le 1+\epsilon$. Therefore, $d^{\rm rel}_F(\cX,\cY)\leq \epsilon$, proving the RUC-regularity of $F$.

\end{proof}

Note that the (global) Lipschitz continuity or H\"older continuity implies the uniform continuity. Therefore,  proving these stronger continuity  conditions for $\log(f)$ could be a more quantitative way to establish the RUC-regularity of $F$. 

\begin{remark}
\label{remark:growth}
Let $f:\mathbb{R}^d\to\mathbb{R}_{++}$ be one of the following: \emph{(1)} $f(x) \equiv L$, \emph{(2)} $f(x) = \log(c+\|x\|)$, \emph{(3)} $f(x) = L + \|x\|^\nu$, \emph{(4)} $f(x) = e^{\|x\|/r}$, with $c>1$, $L>0$, $\nu>0$ and $r>0$. Then $\log(f)$ is Lipschitz continuous on $\mathbb{R}^d$, and the induced set function 
$F(\mathcal{X})=\max\{f(x):x\in\mathcal{X}\}$ is RUC-regular.
\end{remark}

\noindent For the succinctness of the presentation, we move the verification of this proposition and the remark to Appendix \ref{appdx:proposition:growth}. It can be observed that the proof of the proposition only involves triangle inequality of the norm $\|\cdot\|$ and the basic calculus, and is hence not limited to the $\ell_2$-norm. For example, for matrix functions, it can also be the spectral norm. Nevertheless, one can always use the Frobenius norm for simplicity. 

\begin{example}
\label{example:matrix-decomp}
Let $A\in\RR^{m\times n}$ be the input data matrix and let  $X\in\RR^{m\times k}$,  $Y\in\RR^{n\times k}$ be two matrix decision variables.
For the matrix decomposition problem with objective function $f(X,Y) = \|A-XY'\|_F^2$, within any $\cZ\in\mathcal{P}(\RR^{m\times k}\times\RR^{n\times k})$ , its local Lipschitz constant will be 
$$\bL(\cZ):=\max_{[X;Y]\in\cZ}\,\,\|A\|+ 3\cdot\max\left\{\|X\|^2,\|Y\|^2\right\}.$$
Let $F(\cZ) =\max_{Z\in\cZ} \frac{1}{3}\|A\| + \|Z\|^2$, and denote $\cA_X, \cA_Y$ the linear operators that map $Z$ to its $X$ and $Y$ components, respectively. Then we know 
$$\bL(\cZ)=3\cdot\max\left\{F\circ\mathcal{A}_X(\cZ),F\circ\mathcal{A}_Y(\cZ)\right\}.$$
Propositions \ref{proposition:closed} and \ref{proposition:growth} indicates the RUC-regularity of this function. 
\end{example}
\noindent The proof of Example \ref{example:matrix-decomp} only consists of basic calculus and is omitted for simplicity. 
\begin{example}
\label{example:gen-smooth}
Suppose $f:\RR^d\mapsto\RR$ is an $\alpha$-generalized smooth function such that $\|\nabla^2 f(x)\| \leq L_0 + L_1\|\nabla f(x)\|^\alpha$ for some $L_0,L_1>0$ and $\alpha\in(0,1)$. Given any reference point $\bar{x}$, then the Hessian matrix satisfies
$$\|\nabla^2f(x)\|\leq A_0(\bar{x})+A_1(\bar{x})\|x-\bar{x}\|^{\frac{\alpha}{1-\alpha}},$$
where the constants $A_0$ and $A_1$ are defined by 
$$\begin{cases}
A_0(\bar{x}) := L_0 
+  2^{\tfrac{\alpha}{1-\alpha}}L_1\|\nabla f(\bar{x})\| 
+  2^{\tfrac{\alpha^2}{1-\alpha}} L_0 L_1,\\
A_1(\bar{x}):= 2^{\tfrac{\alpha^2}{1-\alpha}} L_0 L_1
+ 2^{\tfrac{\alpha^2}{1-\alpha}}L_1\left((1-\alpha)\,L_1\right)^{\tfrac{\alpha}{1-\alpha}}.
\end{cases}$$
Therefore, the local smoothness constant of $f$ can be defined as 
$$\bL(\cX) := A_0(\bar{x}_i)+A_1(\bar{x}_i)\max_{x\in\cX}\|x-\bar{x}_i\|^{\frac{\alpha}{1-\alpha}}.$$
By Proposition \ref{proposition:growth}, we know $\bL$ is a RUC-regular function.
\end{example} 
\noindent The proof of this example is placed in Appendix \ref{appdx:example:gen-smooth}.

\section{Clipped gradient tracking with staggered variance reduction}
\label{section:CGTVR} 

In this section, we present the clipped gradient tracking method with staggered variance reduction, as well as its simplified synchronous version. We call them \texttt{CGTVR-stag} and \texttt{CGTVR-sync} to distinguish the two variants. Before presenting the algorithms, we would like to make the following assumptions about the objective function of the distributed optimization problem \eqref{prob:main}, based on our previous discussion of RUC local smoothness constants. 

\begin{assumption}
\label{assumption:RMSS} 
For each node $i\in[m]$ and any $\cX\in\mathcal{P}(\RR^d)$, we assume the finite-sum objective function $f_i$ to be $\bL_i(\cX)$-mean-squared-smooth on $\cX$, that is
\begin{equation} 
     \frac{1}{n_i}\sum_{j=1}^{n_i} \|\nabla \fij(x)-\nabla \fij(x') \|^2  \leq \bL_i^2(\cX)\|x-x'\|^2, \quad \forall x, x' \in \cX.\nonumber
\end{equation}  
As local smoothness constants, we further assume that each $\bL_i$ is monotone and additive in the sense that $\bL_i(\cX)\leq\bL_i(\cY)$ for any $\cX\subseteq\cY$, and $\max\{\bL_i(\cX),\bL_i(\cY)\} = \bL_i(\cX\cup\cY)$. Without loss of generality, we
assume that $\bL_i(\cX)\geq1$，for any agent $i\in[m]$ and any set $\cX$. Otherwise, we can make the following modification $\bL_i(\cX) \leftarrow \max\{\bL_i(\cX),1\}$.
\end{assumption}
\noindent As the local smoothness constants of some locally smooth function $f_i$, the monotone and additive properties assumed above are very natural, and is satisfied by all the examples in this paper. 
Throughout the paper, we default that the function $\bL_i(\cdot)$ is known to all other nodes over the network.  

\begin{assumption}
    \label{assumption:RUC}
    For each node $i\in[m]$, we assume $\bL_i\in\cF_{\rm ruc}^d$. 
\end{assumption}
\noindent Denote the maximum local smoothness constant among all agents as
\begin{equation}
    \label{eqn:max-L}
    \bL := \max\left\{\bL_1,\cdots,\bL_m\right\},
\end{equation}
then Proposition \ref{proposition:closed} immediately implies $\bL\in\cF_{\rm ruc}^d$ it is a straightforward exercise that $\bL$ also inherit the monotone and additive properties from each $\bL_i$. And it is straightforward that the $f = \frac{1}{m}\sum_{i=1}^mf_i$ is $\bL(\cX)$-smooth over the convex set $\cX$. 
\begin{assumption}[Finite infimum]
\label{ass:existence-minimizer}
The objective function  of problem~\eqref{prob:main} is bounded from below. That is, we assume $f^* = \inf\{f(x):x\in\mathbb{R}^d\}>-\infty.$
\end{assumption}

Finally, for the communication network of the distributed optimization problem, with node set $\mathcal{V}=[m]$ and undirected edge set $\mathcal{E}$, we assume the associated mixing matrix $W$ to satisfy the following assumption.  
 
\begin{assumption}
\label{assumption:mix-spectral}
Based on the network structure, the mixing matrix $W$ satisfies $W_{ij}=0$ for all $(i,j)\notin\mathcal{E}$, and $W_{ij}>0$ otherwise. In addition, we require $W\mathbf{1}=W^\top\mathbf{1}=\mathbf{1}$,  and $\underline{\lambda}_{\max }(\mathbf{W})=\eta\in(0,1)$ where $\underline{\lambda}_{\max}(\cdot)$ denotes the second largest eigenvalue. 
\end{assumption} 
\noindent As a direct result of Assumption \ref{assumption:mix-spectral}, it holds that
\begin{equation}
    \label{eqn:mix-spectral}
    \Big\|\mathbf{W}-\frac{1}{m} \mathbf{1}\mathbf{1}^\top\Big\| = \eta < 1.
\end{equation}
Setting $\epsilon = \min\left\{1,\frac{1-\eta}{2\eta}\right\}$ and $\gamma = 1+\epsilon = \min\left\{2,\frac{1+\eta}{2\eta}\right\}$, then Assumption \ref{assumption:RUC} implies that 
\begin{equation}
    \label{eqn:ruc-r0}
    \exists r_0>0\qquad\mbox{s.t.}\qquad \gamma^{-1}\leq \frac{\bL_i(\cX)}{\bL_i(\cY)} \leq \gamma\qquad\mbox{for}\qquad\forall i\in[m],\,\,\,\, \forall d_{\rm H}(\cX,\cY)\leq r_0,
\end{equation} 
which, by Proposition \ref{proposition:closed}, also holds for the maximum local smoothness constant $\bL(\cdot)$ with the same set of parameters $r_0$ and $\gamma$. 

Given the constants $\eta,r_0$, and let $\bar{n}=\frac{1}{m}\sum_{i=1}^mn_i$ be the average data size among the agents. Then it is straightforward to design the clipped variant of the SARAH/SPIDER based variance reduced GT algorithms \cite{sun2020improving,xin2020near} that can achieve the optimal $O(\bar{n}^{1/2})$ dependence under classic Lipschitz smoothness setting. Let us present this vanilla extension as \texttt{CGTVR-sync} in Algorithm \ref{algorithm:TVRG} as a warm up of the more efficient but also more complicated \texttt{CGTVR-stag} method.  

\begin{algorithm2e}[H]
\caption{Clipped GT with Synchronous Variance Reduction (\texttt{CGTVR-sync})} 
\label{algorithm:TVRG}
\textbf{Input:} Initialize $x_{1}^{0} = \cdots = x_{m}^{0} = x^0$, clipping threshold $\delta$, epoch length $\tau$,  and constants $\theta,r_0$. \\
Each agent $i\in[m]$, default  $y_{i}^{-1} = g_{i}^{-1} = 0$\\
\For{$t=0,1,\cdots,T-1$, \emph{each agent} $i$}{
    \uIf{$t\!\!\mod\tau == 0$}{
     Compute gradient at check point $\git = \nabla f_i(\xit)$. \\
     Compute local constant $L_{i}^{t}:= \bL\left(\cX_{i}^{t}\right)$ for the ball area $\cX_{i}^{t}:=B(\xit,r_0)$.\\
     All agents broad-cast local constant and compute $\beta_t = \theta L_t$ with $L_t:=\max_j L_{j}^{t}$. \\ 
    }
    \Else{Sample a batch $\cB_i^t$ and compute:
    $\git = \gitm + \frac{1}{|\cB_i^t|}\sum_{j\in\cB_i^t}\!\left(\nabla\fij(\xit)-\nabla\fij(\xitm)\right).$\\
    Inherit the previous parameter $\beta_t = \beta_{t-1}$.  
    }
    Update gradient tracker: $\,\,\yit = \sum_{j}W_{ij}y_j^{t-1} + \git - \gitm.$\\
    Update decision variable: $x_i^{t+1} = \sum_{j}W_{ij}x_j^t - \ctdb\big(y_i^t\big)$.  
    }
\end{algorithm2e}
\vspace{0.1cm}

In Algorithm \ref{algorithm:TVRG}, \texttt{CGTVR-sync} initializes the $(s+1)$-th epoch, by determining the active region $\cX_{i}^{s\tau}$ of the next $\tau$ iterations for each agent $i$, where the region $\cX_{i}^{s\tau}$ is an $r_0$-radius ball centered at the initial point $x_{i}^{s\tau}$ of the current epoch. Then the agents can estimate their local smoothness constant $L_i^{t}=\bL(\cX_{i}^{s\tau})$ for $s\tau\leq t\leq (s+1)\tau-1$ in this region and then communicate to get maximum local smoothness constant $L_{t} = \max_i L_i^{t}$ to construct the stepsize $1/\beta_t$, which will remain unchanged within the current epoch. Then we update the gradient estimator $\git$ by the standard SARAH/SPIDER scheme and update the gradient tracker $y_i^t$ by the standard GT scheme. To ensure the next $\tau$ local iterates of agent $i$ to stay in $\cX_{i}^{s\tau}$, we introduce a norm-based clipping operator for any update direction $v\in\RR^d$ and any stepsize parameter $1/\beta>0$: 
\begin{align}
\label{defn:clip-operator}
\mathcal{T}_\beta^\delta(v):= \begin{cases}
    v/\beta,& \mbox{if }\|v\|\leq \beta\delta,\\
    {\delta\cdot v}/{\|v\|}, &\mbox{otherwise}.
\end{cases}
\end{align}
Then we adopt the clipped gradient update in Line 12:
\begin{equation}
    \label{eqn:clipped-update}
    x_i^{t+1} = \sum_{j}W_{ij}x_i^t - \ctdb \big(y_i^t\big),
\end{equation}
where the gradient update is guaranteed to satisfy $\|\ctdb \big(y_i^t\big)\|\leq \delta$ due to the norm-based clipping scheme. By the mixing property of Assumption \ref{assumption:mix-spectral}, it can be shown that 
\begin{equation}
    \label{eqn:sync-epoch-radii}
    \big\|x_{i}^{s\tau}-x_i^{s\tau+k}\big\|\leq \left(\frac{2\sqrt{2m}}{1-\eta}+k\right)\cdot \delta \qquad\mbox{for}\qquad\forall 1\leq k\leq \tau,
\end{equation}
regardless of the randomness in gradient estimators. Let us denote the constant $c = \frac{2\sqrt{2m}}{1-\eta}$ for simplicity because it frequently appears in latter analysis. Then the above bound immediately indicates that $\{x_{i}^{s\tau+k}\}_{k=0}^{\tau} \subseteq \cX_{i}^{s\tau}$ as long as we select a small enough clipping threshold $ \delta \leq \frac{r_0}{c+\tau}$. Therefore, the estimated local smoothness constant $L_{t}$, with $t/\tau\in[s,s+1)$, is actually evaluating
$$L_t = \max_{1\leq i\leq m}\,\,\bL(\cX_{i}^{s\tau}) = \bL(\cX_{s\tau})\qquad\mbox{with}\qquad \cX_{s\tau} := \bigcup^m_{i=1}\cX_{i}^{s\tau}.$$
Clearly, the Hausdorff distance between $\cX_{s\tau}$ and $\cX_{(s+1)\tau}$ satisfies
$$d_{\rm H}\big(\cX_{s\tau},\cX_{(s+1)\tau}\big)\leq\max_{1\leq i\leq m}d_{\rm H}\big(\cX_{i}^{s\tau},\cX_{i}^{(s+1)\tau}\big) = \max_{1\leq i\leq m}\big\|x_{i}^{s\tau}-x_i^{(s+1)\tau}\big\| \leq r_0.$$
Hence \eqref{eqn:ruc-r0} is activated and the sequence of estimated local smoothness constants $\{L_t\}_{t\geq0}$ will only change slowly in the relative sense, hence facilitating the analysis of gradient tracking methods. 

Compared to the existing SARAH/SPIDER based variance reduced GT methods \cite{sun2020improving,xin2020near}, \texttt{CGTVR-sync} adopts a straightforward modification by replacing a global smoothness constant $L$ with a sequence of estimated local smoothness constants $L_t$. To ensure the effectiveness of the local estimates $L_t$, we propose to additionally incorporate a clipped gradient update \eqref{eqn:clipped-update} with small enough $\delta$. 

However, let us simplify the discussion by adopting the assumption that $n=n_1=\cdots=n_m$ from \cite{sun2020improving,xin2020near}. Then the typical epoch length $\tau$ for SARAH type variance reduction scheme is $\tau\in[\sqrt{n},n]$, see \cite{pham2020proxsarah}. When $n$ is excessively large, the requirement that $\delta\leq\frac{r_0}{c+\tau}$ will result in an extremely small clipping threshold, leading to overly conservative steps and slow practical performance. In order to resolve this issue, we propose a more flexible staggered variant of \texttt{CGTVR-sync}, where each agent no longer needs to broad-cast and communicate their locally estimated smoothness constant, and no need to take the $\cO(r_0/\tau)$ conservative steps. We present the improved variant as Algorithm \ref{algorithm:ATVRGT}, and we call it the clipped GT with staggered variance reduction (\texttt{CGTVR-stag}). 

\begin{algorithm2e}[H]
\caption{Clipped GT with Staggered Variance Reduction (\texttt{CGTVR-stag})} 
\label{algorithm:ATVRGT}
\textbf{Input:} Initialize $x_{1}^0 = x_2^0 = \cdots = x_m^0$, clipping threshold $\delta$, epoch radius $d$,  constants $\theta$ and ${r_0}$, and the maximum epoch length $\tau_i$ for each agent $i\in[m]$. \\
Each agent $i\in[m]$, default  $y_i^{-1} = g_i^{-1} = 0$, ${\rm Flag}(i)={\bf true}$ and ${\rm Count}(i) =0$.\\
\For{$t=0,1,2,\cdots,T-1$, }{
\For{\emph{each agent} $i$}{\uIf{${\rm Flag}(i)$}{
    Compute gradient at check point $\git = \nabla f_i(x_i^t)$ and record the check point ${x}_i^{\rm c}\leftarrow x_i^t$. \\
    Compute local constant $L_{i}^t:= \bL\left(\cX_{i}^{t}\right)$ in the ball $\cX_{i}^{t}:=B(x_{i}^t,{r_0})$  and set $\beta_i^t = \theta L_i^t$.\\

    Update epoch restart signal ${\rm Flag}(i)\leftarrow {\bf false}$ and iteration counter ${\rm Count}(i)\leftarrow 0$.
 }
    \Else{Sample a batch $\cB_i^t$ and compute:
    $\git = \gitm + \frac{1}{|\cB_i^t|}\sum_{j\in\cB_i^t}\!\left(\nabla\fij(x_i^t)-\nabla\fij(x_i^{t-1})\right).$\\
    Inherit the previous inverse step size $\beta_i^t = \beta_i^{t-1}$.  
    }
    Update gradient tracker: $\,\,y_i^{t} = \sum_{j}W_{ij}y_j^{t-1} + \git - \gitm.$\\
    Update decision variable: $x_i^{t+1} = \sum_{j}W_{ij}x_j^t - \ctdbi\big(y_i^t\big)$. \\
    \uIf{$\|x_i^{t+1}-{x}_i^{\rm c}\| \geq d $ or ${\rm Count}(i)  == \tau_i-1$}{
    Renew epoch restart signal: ${\rm Flag}(i)\leftarrow {\bf true}$.} 
    \Else{Increase local iteration counter: ${\rm Count}(i)\leftarrow {\rm Count}(i) + 1$.}}    
    }  
\end{algorithm2e}
\vspace{0.1cm}

Compared to \texttt{CGTVR-sync}, a naive extension of \cite{sun2020improving,xin2020near}, a major difference in \texttt{CGTVR-stag} lies in the early stop strategy in Line 14 of Algorithm \ref{algorithm:ATVRGT} that allows the local agent $i$ to terminate the epoch before running the full $\tau_i$ steps if the iterations have already made enough progress, which is represented by a large enough update ($\|x_i^{t+1}-{x}_i^{\rm c}\| \geq d$) compare to the initial solution (check point ${x}_i^{\rm c}$) of the current epoch. Because the agent need not run the full $\tau_i$ steps of the epoch, we no longer need to use the pessimistic estimate of the iterates' active region \eqref{eqn:sync-epoch-radii}, and hence it will be sufficient to choose $\delta  = \cO(r_0/c) = \cO(1)$ for \texttt{CGTVR-stag} rather than the conservative $1/n\sim1/\sqrt{n}$ clipping threshold in \texttt{CGTVR-sync}. Therefore, this strategy allows a much more aggressive update and can improve practical performance for many problem instances.  As a consequence of early stopping, each agent will naturally experience staggered, or asynchronous, initialization of the epochs, which makes the synchronous broad-cast and take max step (Algorithm \ref{algorithm:TVRG}, Line 6) unnecessary. In \texttt{CGTVR-stag}, each agent simply utilizes the locally estimated constant $L_i^t$ and the clipped update as well as the early stopping criteria will guarantee their relative difference $|1-\max\{ {L_i^t}/{L_j^t}, {L_j^t}/{L_i^t}\}|$ to be small. To showcase this difference, we provide the Lemma \ref{lemma:clip-distance2mean} and Proposition \ref{proposition:stager-dist-bound} about the clipped updates. To simplify the presentation of these results and latter discussion, we adopt the following block notations: 
\begin{equation}
    \label{defn:block-notation}
     \bxt = \begin{bmatrix}
    (x_1^t)^\top\\
    \vdots\\
    (x_m^t)^\top
\end{bmatrix}, \qquad \byt = \begin{bmatrix}
    (y_1^t)^\top\\
    \vdots\\
    (y_m^t)^\top
\end{bmatrix}, \qquad \bGt = \begin{bmatrix}
    ({g}_1^t)^\top\\
    \vdots\\
    ({g}_m^t)^\top
\end{bmatrix}, \qquad\mbox{and}\qquad\ctdb(\byt)=\begin{bmatrix}
    \mathcal{T}^\delta_{\beta_1^t}(y_1^t)^\top\\
    \vdots\\
    \mathcal{T}^\delta_{\beta_m^t}(y_m^t)^\top
\end{bmatrix}, 
\end{equation}
where $\bxt,\byt\in\RR^{m\times d}$ are constructed by stacking the local variable $\xit$ and the local gradient tracker $\yit$ row by row, respectively.   The construction of $\bGt$ and $\ctdb(\byt)$ are similar. In particular, for the notation $\ctdb(\byt)$, we use $\beta_t:=\{\beta_1^t,\beta_2^t,\cdots,\beta_m^t\}$ to denote the collection of all local $\beta_i^t$ for simplicity. Under this notation, the clipped update in \texttt{CGTVR-sync} and \texttt{CGTVR-stag} can be compactly written as 
\begin{eqnarray}
\label{eqn:CGT}
    \begin{cases}
        \bxte = \bW\bxt - \ctdb\left(\byt\right),\\
        \byte = \bW\byt + \bGte- \bGt.
    \end{cases}
\end{eqnarray}
In addition, define the (transposed) average variable and gradient tracker as the following row vectors:
$$\bar{\mathbf{x}}^t:=\frac{1}{m}\mathbf{1}^\top\bxt=\frac{1}{m}\sum_i (\xit)^\top, \quad \bar{\mathbf{y}}^t:=\frac{1}{m}\mathbf{1}^\top\byt=\frac{1}{m}\sum_i (\yit)^\top, \quad \bar{\mathbf{G}}^t = \frac{1}{m}\mathbf{1}^\top \mathbf{G}^t = \frac{1}{m}\sum_i ({g}_i^t)^\top, $$
where ${\bf 1}$ denotes the all-one column vector. Now we state the following results for the clipped update.

\begin{lemma}
    \label{lemma:clip-distance2mean}
    Suppose the mixing matrix $\bW\in\RR^{m\times m}$ satisfies Assumption \ref{assumption:mix-spectral}, and the sequence $\{\bxt\}_{t\geq0}\subseteq\RR^{m\times d}$ is generated by $\bxte = \bW\bxt + \bvt$ from a consensus initialization $x_1^0 =x_2^0= \cdots = x_m^0$ and an arbitrary sequence $\{\bvt\}_{t\geq0}\subseteq\RR^{m\times d}$ such that $\|\bvt\|_{2,\infty}\leq \delta$ for some fixed $\delta>0$ and $\forall t\geq0$. Then for the sequence $\{\bxt\}_{t\geq0}$, the following relationships hold for all $t\geq0$ that
    $$\lnorm\bbxte-\bbxt\rnorm \leq \delta\qquad\mbox{and}\qquad\lnorm\bxt-{\bf 1}\bbxt\rnorm_{F}\leq {c\delta}/{2},$$
    where $c = \frac{2\sqrt{2m}}{1-\eta}$ is a constant.  
\end{lemma}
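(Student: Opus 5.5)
The plan is to work directly with the linear recursion $\bxte = \bW\bxt + \bvt$ and project it onto the consensus direction and its orthogonal complement. Two facts drive everything: $\bW$ is doubly stochastic, and the bound \eqref{eqn:mix-spectral} holds, namely $\|\bW - \frac1m\bone\bone^\top\| = \eta<1$.

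For the first claim, multiply the recursion on the left by $\frac1m\bone^\top$. Since $\bone^\top\bW = \bone^\top$, this gives $\bbxte - \bbxt = \frac1m\bone^\top\bvt = \frac1m\sum_i (v_i^t)^\top$. Applying the triangle inequality together with $\|v_i^t\|\le \|\bvt\|_{2,\infty}\le\delta$ yields $\|\bbxte-\bbxt\|\le\delta$. This is the same as the notation-section inequality $\|A^\top v\|\le\|A\|_{2,\infty}\|v\|_1$ with $v=\frac1m\bone$.

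For the consensus bound, set $e_t := \|\bxt - \bone\bbxt\|_F$ and $J:=\frac1m\bone\bone^\top$. The key algebraic step is the identity
$$\bxte - \bone\bbxte = (I-J)\bW\bxt + (I-J)\bvt = (\bW - J)(\bxt - \bone\bbxt) + (I-J)\bvt.$$
It follows from $(I-J)\bW = \bW - J = (\bW-J)(I-J)$, which uses $\bW\bone=\bone$ and $\bone^\top\bW=\bone^\top$.

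Taking Frobenius norms, using $\|AB\|_F\le\|A\|\|B\|_F$, $\|I-J\|\le1$ and $\|\bvt\|_F\le\sqrt m\,\|\bvt\|_{2,\infty}\le\sqrt m\,\delta$, gives the scalar recursion $e_{t+1}\le \eta e_t + \sqrt m\,\delta$. The consensus initialization gives $e_0=0$. Unrolling, $e_t\le \sqrt m\,\delta\sum_{k\ge0}\eta^k = \frac{\sqrt m\,\delta}{1-\eta}$, and this is at most $\frac{\sqrt{2m}\,\delta}{1-\eta} = c\delta/2$. The conclusion therefore holds with room to spare. Alternatively, one could avoid the projection $(I-J)$ and keep the cruder bound $\|\bvt - \bone\frac1m\bone^\top\bvt\|_F\le 2\sqrt m\delta$, which still lands within a factor matching the constant $c$.

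There is no serious obstacle here. The only point needing care is the commutation identity $(I-J)\bW=(\bW-J)(I-J)$, which lets the contraction factor $\eta$ act on the consensus error rather than on $\bxt$ itself. Everything else is a geometric-series bound that holds deterministically for every sequence $\{\bvt\}$ with $\|\bvt\|_{2,\infty}\le\delta$, independent of any randomness.
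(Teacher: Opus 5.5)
Your argument is correct and is essentially the paper's proof: both use the decomposition $\bxte-\bone\bbxte=(\bW-J)(\bxt-\bone\bbxt)+(I-J)\bvt$ with $J=\frac{1}{m}\bone\bone^\top$ and $\|\bW-J\|=\eta$. The only difference is that you apply the triangle inequality to unsquared Frobenius norms, whereas the paper squares and uses Young's inequality with $\zeta=1-\eta$, so you obtain the slightly sharper bound $\sqrt{m}\,\delta/(1-\eta)\le c\delta/2$.

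One caveat about your aside. Dropping the projection and using $\|\bvt-J\bvt\|_F\le 2\sqrt{m}\,\delta$ would only give $2\sqrt{m}\,\delta/(1-\eta)=c\delta/\sqrt{2}$, which misses the stated constant $c\delta/2$. So the step $\|I-J\|\le 1$ is actually needed.
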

\begin{proof}
    First, due to the fact that $\bW^\top{\bf 1}={\bf 1}$, it always hold that $\bbxte = \bbxt + \bbvt.$ Hence,  we have
    $$\left\|\bbxte-\bbxt\right\| = \left\|({\bf 1}/m)^\top \bvt\right\| \leq \left\|\bvt\right\|_{2,\infty}\cdot\|{\bf 1}/m\|_1 \leq \delta.$$
    Second, with the constant $\zeta = 1-\eta\in(0,1)$, we have 
    \begin{eqnarray*}
        \lnorm\bxte-\bone\bbxte\rnorm^2_F &=& \lnorm\left(\bW-\bone\bone^\top/m\right)\left(\bxt-\bone\bbxt\right) + \left(I-\bone\bone^\top/m\right)\bvt\rnorm_F^2 \\
        & \leq & (1+\zeta)\lnorm \bW-\bone\bone^\top/m\rnorm^2\lnorm\bxt-\bone\bbxt\rnorm_F^2 + (1+1/\zeta)\lnorm I-\bone\bone^\top/m\rnorm^2\lnorm\bvt\rnorm_F^2.
    \end{eqnarray*}   
    By \eqref{eqn:mix-spectral} and using our special choice of $\zeta = 1-\eta$ as well as the condition that $\|\bvt\|^2_F\leq m\delta^2$, we obtain
    $$\lnorm\bxte-\bone\bbxte\rnorm^2_F\leq (2-\eta)\eta^2\lnorm\bxt-\bone\bbxt\rnorm_F^2 + \frac{2-\eta}{1-\eta}\lnorm\bvt\rnorm_F^2 \leq  \eta\lnorm\bxt-\bone\bbxt\rnorm_F^2 + \frac{2m\delta^2}{1-\eta},$$
    which further indicates that 
    $\|\bxt-\bone\bbxt\|^2_F \leq \frac{2m\delta^2}{(1-\eta)^2}= c^2\delta^2/4$ for all $t\geq0$ given the consensus initialization that $x_1^0=x_2^0=\cdots=x_m^0$, which completes the proof of the lemma. 
\end{proof}
Note that in the clipped gradient tracking update \eqref{eqn:CGT}, the adoption of the clipping operator \eqref{defn:clip-operator} guarantees the update to satisfy $\|-\ctdb(\byt)\|_{2,\infty}\leq \delta$  regardless of $\beta_t$ and $\byt$. Therefore, Lemma \ref{lemma:clip-distance2mean} directly applies to the iteration sequence $\{\bxt\}$ generated by \texttt{CGTVR-sync} and \texttt{CGTVR-stag}. Because the Frobenius norm always upper bounds the $\ell_2$-$\ell_\infty$ norm, Lemma \ref{lemma:clip-distance2mean}  indicates that all the local iterates remains $(c\delta/2)$-close to the average iterate:
$\|x_i^t - (\bbxt)^\top\| \leq c\delta/2$ for all $i\in[m]$ and $t\geq0$. As a direct consequence, we can upper bound the distance of any two local iterates by 
\begin{equation}
    \label{eqn:local-iter-dist}
    \|x_i^t-x_j^{t'}\| \leq (c+|t'-t|)\delta
\end{equation}
for any $i,j\in[m]$ and $t,t'\geq0$, please refer to Figure \ref{fig:CGT-dist}-(a). This complement the earlier discussion of \texttt{CGTVR-sync} in this section. 

For \texttt{CGTVR-stag} where all the agents asynchronously initialize or early stop their local epochs, at any time point $t\geq0$, let us define the mapping $u$ that returns the last time step right before terminating the current epoch at agent $i$ as $u(i,t)$. Similarly, we define the mapping $l$ to return the first time step of the current epoch at agent $i$ as $l(i,t)$. Based on this definition, if agent $i$ terminates the current epoch at time $t$, then we should write $t = l(i,t)$ as it is the first time step of the new epoch. Therefore, we should notice that the adoption of an early stop criterion may cause the epoch to be shorter than $\tau_i$ steps, that is
$$u(i,t)-l(i,t)\leq \tau_i, \qquad \forall i\in[m],\,\,\forall t\geq0$$
while the equality may not necessarily be attained. In this case, bounding the relative similarity between the local smoothness constants $L_i^t,L_j^t$ at two arbitrary agents $i,j$ essentially reduces to bounding the distance between $x_i^{l(i,t)}$ and $x_j^{l(j,t)}$, which is described by the following proposition.  
\begin{proposition}
    \label{proposition:stager-dist-bound}
    Let the sequence $\{\bxt\}_{t\geq0}$ be generated by \texttt{CGTVR-stag} (Algorithm \ref{algorithm:ATVRGT}), then 
    $$d_{\rm H}\left(\cX_i^{l(i,t)},\cX_j^{l(j,t)}\right) \leq c\delta+d  \quad\mbox{and}\quad d_{\rm H}\left(\cX_i^{l(i,t)},\cX_j^{l(j,t+1)}\right)\leq (c+1)\delta+d$$
    For all $i,j\in[m]$ and any $ t\geq0.$
    Suppose Assumptions \ref{assumption:RMSS} and \ref{assumption:RUC} hold. Then if we select $\delta$ and $d$ such that $(c+1)\delta+d\leq r_0$, we can guarantee  
    $$\frac{1}{\gamma}\leq  \max\left\{\frac{L_j^t}{L_i^t}\,\,,\, \frac{L_j^{t+1}}{L_i^t}\right\}\leq \gamma$$
    for any $i,j\in[m]$ and $t\geq0$, where $\gamma = \min\left\{2,\frac{1+\eta}{2\eta}\right\}$ is defined in \eqref{eqn:ruc-r0}.
\end{proposition}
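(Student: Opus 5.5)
The plan has two parts. First, I will reduce the Hausdorff distance between two balls to the distance between their centers. Then I will control that center distance using Lemma \ref{lemma:clip-distance2mean} together with the early-stopping rule of Line 14 of Algorithm \ref{algorithm:ATVRGT}.

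\textbf{Step 1: balls.} For two balls of the same radius $r_0$, I will show $d_{\rm H}(B(x,r_0),B(y,r_0))\leq\|x-y\|$. Indeed, any point $x+z$ with $\|z\|\leq r_0$ is matched by the point $y+z$ in the other ball, and the argument is symmetric. Hence it suffices to bound the center distances
$$\|x_i^{l(i,t)}-x_j^{l(j,t)}\| \qquad\mbox{and}\qquad \|x_i^{l(i,t)}-x_j^{l(j,t+1)}\|.$$

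\textbf{Step 2: center distance.} I will insert the current iterates and use the triangle inequality:
$$\|x_i^{l(i,t)}-x_j^{l(j,t)}\|\leq \|x_i^{l(i,t)}-x_i^{t}\|+\|x_i^t-x_j^t\|+\|x_j^t-x_j^{l(j,t)}\|.$$

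The middle term is at most $c\delta$. This follows from Lemma \ref{lemma:clip-distance2mean}, since each local iterate is within $c\delta/2$ of $\bbxt$, which is exactly \eqref{eqn:local-iter-dist} with $t'=t$.

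For the outer terms I use the early-stopping rule. Within the current epoch at agent $i$, every iterate $x_i^s$ with $l(i,t)\leq s\leq t$ satisfies $\|x_i^s-x_i^{\rm c}\|<d$, because the epoch would otherwise have been restarted, and $x_i^{\rm c}=x_i^{l(i,t)}$. So each outer term is below $d$.

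\textbf{Main obstacle.} Adding these bounds naively gives $c\delta+2d$, which is weaker than the stated $c\delta+d$. To sharpen this, I plan to avoid passing through both agents' current iterates. Suppose without loss of generality that $l(j,t)\leq l(i,t)$. Then at time $l(i,t)$ agent $j$ is still inside its epoch, so $\|x_j^{l(i,t)}-x_j^{l(j,t)}\|<d$. The two epoch starts are then compared through the single time $s=l(i,t)$:
$$\|x_i^{s}-x_j^{l(j,t)}\|\leq \|x_i^{s}-x_j^{s}\|+\|x_j^{s}-x_j^{l(j,t)}\|\leq c\delta+d.$$

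This choice of intermediate time is the key step. Some care is needed with the boundary convention: the restart check in Line 14 is applied to $x_i^{t+1}$ with strict inequality for the continuing epoch, so every iterate strictly inside the epoch, as well as the check point itself, lies within distance $d$ of it. If $\tau_i$ forces a restart, this only shortens the epoch and does no harm.

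\textbf{Second bound.} If agent $j$ does not restart at $t+1$, then $l(j,t+1)=l(j,t)$ and the first bound applies directly. Otherwise $l(j,t+1)=t+1$, and I consider two cases according to which epoch start is later.
\begin{itemize}
\item If $l(i,t)\leq t+1$ is the earlier start, I compare $x_j^{t+1}$ with $x_i^{t}$. By \eqref{eqn:local-iter-dist} with $|t'-t|=1$ this distance is at most $(c+1)\delta$, and $\|x_i^t-x_i^{l(i,t)}\|<d$ adds the final $d$.
\item The symmetric case uses the same intermediate-time trick.
\end{itemize}
In every case the bound is $(c+1)\delta+d$.

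\textbf{RUC consequence.} If $(c+1)\delta+d\leq r_0$, both Hausdorff distances are at most $r_0$. Since $L_i^t=\bL(\cX_i^{l(i,t)})$ by the definition in Line 7, \eqref{eqn:ruc-r0} applied to $\bL$ gives both ratios $L_j^t/L_i^t$ and $L_j^{t+1}/L_i^t$ in $[\gamma^{-1},\gamma]$, which is the claimed inequality.
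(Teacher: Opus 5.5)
Your proposal is correct and is essentially the paper's proof. For the first bound you use the same intermediate time $l(i,t)$ under $l(j,t)\le l(i,t)$, bridging through $x_j^{l(i,t)}$ and hence through $\bar{\mathbf{x}}^{l(i,t)}$ via Lemma~\ref{lemma:clip-distance2mean}. For the second bound you use the same chain $x_i^{l(i,t)}\to x_i^t\to x_j^{t+1}$, which is exactly the paper's route through $\bar{\mathbf{x}}^t$ and $\bar{\mathbf{x}}^{t+1}$.

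The only superfluous piece is your two-case split in the second bound. Since $l(i,t)\le t<t+1$ always holds, the ``symmetric case'' never arises.
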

\begin{proof}
    Lemma \ref{lemma:clip-distance2mean} shows that the average iterates $\bxt$ provides a bridge to bound the distance between different local iterates. One only needs to use such a bridge in triangle inequality to bound the distance between the check points $x_i^{l(i,t)}$ and $x_j^{l(j,t)}$.  By the early stop criterion (Line 14, Algorithm \ref{algorithm:ATVRGT}), we know that all the local iterates for agent $i$ and agent $j$ satisfy the following relationship:
    $$\left\{x_i^k: l(i,t)\leq k\leq t\right\}  \subseteq B\left(x_i^{l(i,t)},d\right)\qquad\mbox{and}\qquad \left\{x_j^k: l(j,t)\leq k \leq t\right\} \subseteq B\left(x_j^{l(j,t)},d\right).$$
    See illustration in Figure \ref{fig:CGT-dist}-(b). Now, let us assume w.l.o.g. that $l(j,t)\leq l(i,t)$. Then we can use $\bar{\bf x}^{l(i,t)}$ and $x_j^{l(i,t)}$ as bridges and use Lemma \ref{lemma:clip-distance2mean} to yield
    $$\lnorm x_i^{l(i,t)}-x_j^{l(j,t)}\rnorm \leq \lnorm x_i^{l(i,t)}-\bar{\bf x}^{l(i,t)}\rnorm + \lnorm \bar{\bf x}^{l(i,t)}-x_j^{l(i,t)}\rnorm + \lnorm x_j^{l(i,t)}-x_j^{l(j,t)}\rnorm \leq c\delta+d.$$
    Then it remains to use the fact that 
    $d_{\rm H}\big(\cX_i^{l(i,t)},\cX_j^{l(j,t)}\big) = \big\|x_i^{l(i,t)}-x_j^{l(j,t)}\big\|$ to prove the first bound. 
    
    For the second bound, we consider two cases. In the first case, the agent $j$ does not terminate the current epoch, then $x_j^{l(j,t+1)} = x_j^{l(j,t)}$ does not change and we still have $d_{\rm H}\big(\cX_i^{l(i,t)},\cX_j^{l(j,t)}\big) \leq c\delta+d$. In the second case, the agent $j$ does terminate the current epoch and start a new one at time $t+1$. Then in this case, $x_j^{l(j,t+1)} = x_j^{t+1}$ changes to a new point. This case is illustrated in Figure \ref{fig:CGT-dist}-(c), where we use the chain $x_i^t$, $\bbxt$, $\bbxte$ as the bridge to obtain
    $$\lnorm x_i^{l(i,t)}-x_j^{l(j,t+1)}\rnorm \leq \lnorm x_i^{l(i,t)}-x_i^t\rnorm + \lnorm x_i^t-\bbxt\rnorm + \lnorm \bbxt-\bbxte\rnorm + \lnorm\bbxte-x_j^{t+1}\rnorm\leq (c+1)\delta+d.$$
    Combining the two cases proves the second Hausdorff distance bound.  
    Then the remaining argument of the proposition directly follows Assumptions \ref{assumption:RMSS}, \ref{assumption:RUC}, and \eqref{eqn:ruc-r0}. 
\end{proof} 

\begin{figure}[h!]
    \centering 
    \begin{subfigure}{0.27\textwidth}
        \centering
        \includegraphics[width=\textwidth]{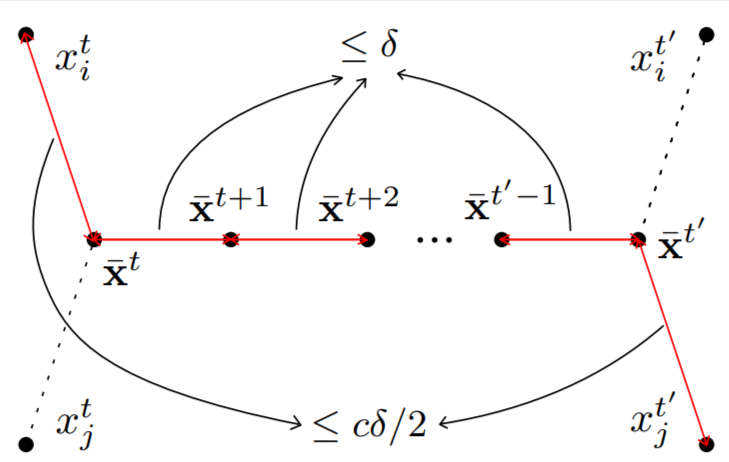}   
        \caption{ }
    \end{subfigure} 
    \begin{subfigure}{0.23\textwidth}
        \centering
        \includegraphics[width=\textwidth]{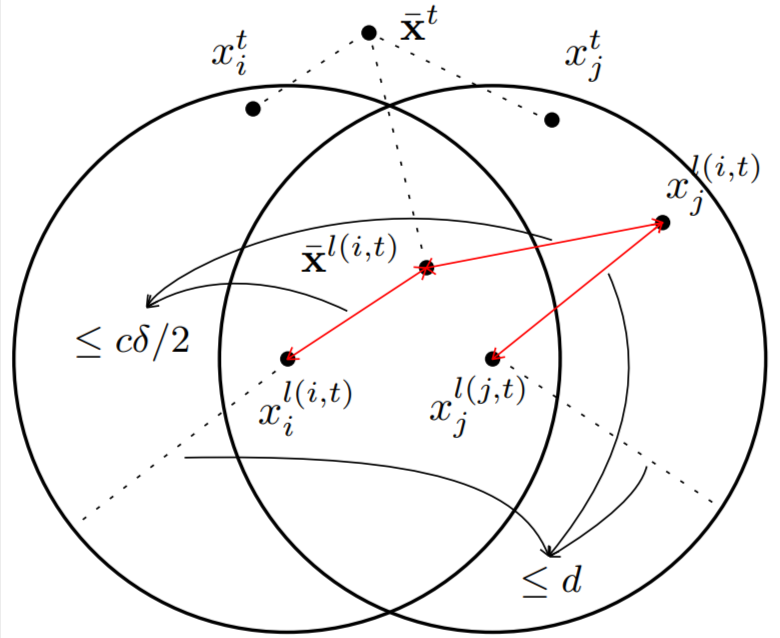}  
        \caption{ }
    \end{subfigure} 
    \begin{subfigure}{0.25\textwidth}
        \centering
        \includegraphics[width=\textwidth]{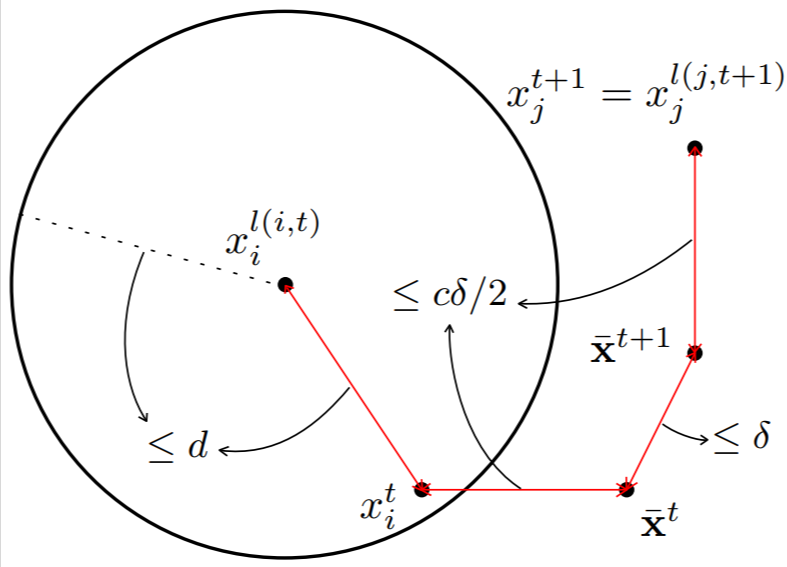}  
        \caption{ }
    \end{subfigure}
    \begin{subfigure}{0.22\textwidth}
        \centering
        \includegraphics[width=\textwidth]{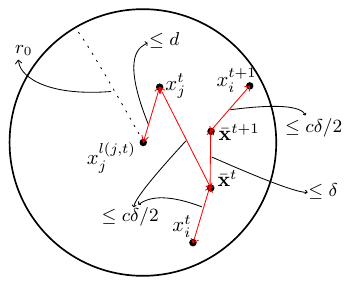}  
        \caption{ }
    \end{subfigure}
    \caption{(a) illustrates the distance upper bound for two arbitrary local iterates at time $t$ and $t'$. (b) illustrates the difference between the local check points of arbitrary two agents at any time point $t$. (c) illustrates the difference between the local check points of arbitrary two agents at any time point $t$ and $t+1$. (d) illustrates the membership relationship between several important points and the set $\cX_j^{l(j,t)}:=B(x_j^{l(j,t)},r_0)$ for an arbitrary agent $j\in[m]$.}
    \label{fig:CGT-dist}
\end{figure}

\begin{remark} \label{remark:Stag-point-membership}
Based on the analysis of Lemma \ref{lemma:clip-distance2mean} and Proposition \ref{proposition:stager-dist-bound}, we know that for any $t\geq 0$ and any $i,j\in[m]$, it holds that 
$\big\{\bxt, \bxte, \xit, \xite\big\} \subseteq \mathcal{X}_j^{l(j,t)}$.
\end{remark}
The proof of this remark is illustrated in Figure \ref{fig:CGT-dist}-(d), and is hence omitted for succinctness, and the requirement that on $\delta,d$ and $r_0$ is exactly the same as that of Proposition \ref{proposition:stager-dist-bound}. With this observation in mind, we know that although the local smoothness constants may change drastically in terms of the absolute difference, but their relative difference can be safely controlled if a clipped update is adopted. Next, we can proceed to the convergence analysis of \texttt{CGTVR-stag} method. For succinctness, we omit the analysis of \texttt{CGTVR-sync} as it is similar but simpler than \texttt{CGTVR-stag}.

\section{The convergence analysis of \texttt{CGTVR-stag}}
\label{section:cvg-analysis}
In this section, we provide the convergence analysis of the \texttt{CGTVR-stag} method based on the assumptions and preliminary results in Sections \ref{section:RUC-regularity} and \ref{section:CGTVR}. We start with a standard iterative descent result based on the local estimation of the smoothness constants that are RUC-regular. 

\begin{lemma}
\label{lemma:stag-descent}
Suppose that the Assumption \ref{assumption:RMSS} - \ref{assumption:mix-spectral} hold and the parameters $\delta,d$ and $r_0$ are selected according to Proposition \ref{proposition:stager-dist-bound}, and $\theta>2$. Then for any $t\geq0$, the iterates of \texttt{CGTVR-stag} satisfy
\begin{equation}
\label{lm:stag-descent-main}
f(\bbxte) \leq f(\bbxt)  -  \frac{(\theta-2)L_{\min}^t}{m} \left\|\ctdb(\byt)\right\|_F^2 +  \frac{L_{\min}^t}{2m}\|\mathbf{1}\bar{\mathbf{x}}^t-\mathbf{x}^t\|^2_F + \frac{\|\bone\bbyt - \byt\|^2_F}{2mL_{\min}^t}+ \frac{\|\omega^t-(\bbyt)^\top\|^2}{2 L_{\min}^t}.
\end{equation}
where $L^t_{\min}=\min_i{L_i^t}$, and $\omega^t:=\frac{1}{m}\sum_{i=1}^m\nabla f_i(\xit)$.
\end{lemma}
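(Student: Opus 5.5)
We apply the descent lemma to the average iterate along the segment $[\bbxt,\bbxte]$, and then split the linear term into a clipped-descent part and three error terms.

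\textbf{Step 1 (smoothness constant along the step).} By Remark~\ref{remark:Stag-point-membership}, both $\bbxt$ and $\bbxte$ lie in $\cX_j^{l(j,t)}$ for every $j$. These sets are balls, hence convex, so the whole segment $[\bbxt,\bbxte]$ lies in their intersection. Since $f$ is $\bL(\cX)$-smooth on convex $\cX$ and $\bL$ is monotone, we have
$$f(\bbxte)\le f(\bbxt)+\langle\nabla f(\bbxt),\bbxte-\bbxt\rangle+\frac{\bL(\cX_{j^*}^{l(j^*,t)})}{2}\|\bbxte-\bbxt\|^2,$$
where $j^*$ is the index attaining $L_{\min}^t$. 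We need to identify $L_i^t$ with $\bL(\cX_i^{l(i,t)})$: the constant is inherited within an epoch. Using $\bbxte-\bbxt=-\frac1m\bone^\top\ctdb(\byt)$ and Jensen's inequality, the quadratic term is at most $\frac{L_{\min}^t}{2m}\|\ctdb(\byt)\|_F^2$.

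\textbf{Step 2 (splitting the linear term).} We write
$$\nabla f(\bbxt)=(\bbyt)^\top+\big(\omega^t-(\bbyt)^\top\big)+\big(\nabla f(\bbxt)-\omega^t\big).$$
Each $y_i^t$ is further written as $(\bbyt)^\top-(y_i^t-(\bbyt)^\top)$. The linear term then becomes $-\frac1m\sum_i\langle y_i^t,\mathcal{T}_i\rangle$, where $\mathcal{T}_i:=\mathcal{T}^\delta_{\beta_i^t}(y_i^t)$, plus three cross terms. Each cross term is handled by Young's inequality with weight $L_{\min}^t$, which produces costs of the form $\frac{1}{2mL_{\min}^t}\|\cdot\|^2$ or $\frac{1}{2L_{\min}^t}\|\cdot\|^2$ together with an $\frac{L^t_{\min}}{2m}\|\ctdb(\byt)\|_F^2$ piece.

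\begin{itemize}
\item For the term $\bbyt - y_i^t$, Young's inequality yields $\frac{\|\bone\bbyt-\byt\|_F^2}{2mL_{\min}^t}$.
\item For the term $\omega^t-\bbyt$, it yields $\frac{\|\omega^t-(\bbyt)^\top\|^2}{2L^t_{\min}}$.
\item For the term $\nabla f(\bbxt)-\omega^t$, we use the mean-squared smoothness of Assumption~\ref{assumption:RMSS} on $\cX_{j^*}^{l(j^*,t)}$, which contains $\bbxt$ and all $x_i^t$. This gives $\|\nabla f(\bbxt)-\omega^t\|^2\le \frac{(L^t_{\min})^2}{m}\|\bone\bbxt-\bxt\|_F^2$, so the cost is $\frac{L^t_{\min}}{2m}\|\bone\bbxt-\bxt\|_F^2$.
\end{itemize}

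\textbf{Step 3 (the clipped inner product; main obstacle).} We need the key inequality $\langle y_i^t,\mathcal{T}_i\rangle\ge\beta_i^t\|\mathcal{T}_i\|^2$, which we check case by case.
\begin{itemize}
\item If the step is not clipped, then $\mathcal{T}_i=y_i^t/\beta_i^t$ and the inequality holds with equality.
\item If the step is clipped, then $\langle y_i^t,\mathcal{T}_i\rangle=\delta\|y_i^t\|$, and $\beta_i^t\|\mathcal{T}_i\|^2=\beta_i^t\delta^2\le\delta\|y_i^t\|$ because $\|y_i^t\|>\beta_i^t\delta$.
\end{itemize}
Since $\beta_i^t=\theta L_i^t\ge\theta L^t_{\min}$, we obtain $-\frac1m\sum_i\langle y_i^t,\mathcal{T}_i\rangle\le-\frac{\theta L^t_{\min}}{m}\|\ctdb(\byt)\|_F^2$. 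Collecting the $\frac{L^t_{\min}}{2m}$ pieces (one from smoothness and up to three from Young's inequality) gives at most $\frac{2L^t_{\min}}{m}\|\ctdb(\byt)\|_F^2$, which leaves the claimed coefficient $-(\theta-2)L^t_{\min}/m$.

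The delicate bookkeeping is ensuring that every point appearing in Steps~1 and~2 sits in the single ball whose constant equals $L^t_{\min}$, so that no factor of $\gamma$ is needed. This is where Remark~\ref{remark:Stag-point-membership} is essential. If it fails for some point, we would fall back on Proposition~\ref{proposition:stager-dist-bound}, absorb a factor $\gamma\le2$ into $\theta$, and adjust the constants accordingly.
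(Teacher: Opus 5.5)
Your proposal is correct and follows the paper's proof essentially line for line. Both arguments use the same steps: the descent lemma with constant $L_{\min}^t$ justified by Remark~\ref{remark:Stag-point-membership}, the inequality $\langle \mathcal{T}_\beta^\delta(v),v\rangle\ge\beta\|\mathcal{T}_\beta^\delta(v)\|^2$ with $\beta_i^t\ge\theta L_{\min}^t$, and the same three-term split (the paper's $T_1,T_2,T_3$), each handled by Young's inequality at weight $L_{\min}^t$, which gives $\theta-\tfrac12-\tfrac32=\theta-2$. One small slip: your identity for $y_i^t$ should read $(\bbyt)^\top=y_i^t+((\bbyt)^\top-y_i^t)$, but this is harmless because Young's inequality does not depend on the sign of the cross term.
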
 

The proof of this lemma is presented in Appendix \ref{appdx:lemma:stag-descent} for succinctness. We should notice that the term $\|\mathbf{1}\bar{\mathbf{x}}^t-\mathbf{x}^t\|^2_F = \sum_i\|x_i^t-\frac{1}{m}\sum_jx_j^t\|^2$ reflect the consensus error, and is expected to go to $0$ due to the mixing property of the matrix $\bW$. A similar consensus behavior should be expected for  $\|\mathbf{1}\bar{\mathbf{y}}^t-\mathbf{y}^t\|^2_F$, and will proved later.  To connect the stochastic descent term (with random errors) $\|\ctdb(\byt)\|_F^2$ with the target quantity $\|\cT_{\beta_{\min}^t}^\delta(\nabla f(\bbxt))\|^2$ that we aim to bound, we need to further provide the following bound, with proof stated in Appendix \ref{appdx:lemma:stag-descent-gradmap}.

\begin{lemma}
\label{lemma:stag-descent-gradmap}
Under the same conditions of Lemma \ref{lemma:stag-descent}, it also holds that 
\begin{eqnarray} 
\label{lm:stag-descent-gradmap-main}
&& f(\bbxte)  \leq  f(\bbxt)  -  \frac{(\theta-2)L_{\min}^t}{8\gamma^2} \big\|\cT_{\beta_{\min}^t}^\delta(\nabla f(\bbxt))\big\|^2  -  \frac{(\theta-2)L_{\min}^t}{2m}\left\|\ctdb(\byt)\right\|_F^2\\
&& \qquad\qquad+ \frac{9L_{\min}^t}{16m}\|\mathbf{1}\bar{\mathbf{x}}^t-\mathbf{x}^t\|^2_F 
 + \frac{9}{16mL_{\min}^t}\|\bone\bbyt - \byt\|^2_F + \frac{9}{16L_{\min}^t}\|\omega^t-(\bbyt)^\top\|^2. \nonumber
\end{eqnarray} 
where $\beta_{\min}^{t}=\theta L_{\min}^t$, and $\gamma = \left\{2,\frac{1+\eta}{2\eta}\right\}$ is the constant selected in \eqref{eqn:ruc-r0} to utilize the RUC property.
\end{lemma}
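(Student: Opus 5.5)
The plan is to start from Lemma \ref{lemma:stag-descent} and split its descent term $\frac{(\theta-2)L_{\min}^t}{m}\|\ctdb(\byt)\|_F^2$ into two equal halves. One half is kept as it is; this gives the term $-\frac{(\theta-2)L_{\min}^t}{2m}\|\ctdb(\byt)\|_F^2$ in \eqref{lm:stag-descent-gradmap-main}. The other half is converted into a lower bound involving $\|\cT_{\beta_{\min}^t}^\delta(\nabla f(\bbxt))\|^2$. This conversion costs some extra error terms, and they raise the coefficients $\frac12$ in \eqref{lm:stag-descent-main} to $\frac{9}{16}$.

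\textbf{Step 1: compare each local clip with the clip at $\beta_{\min}^t$.} By Proposition \ref{proposition:stager-dist-bound}, $L_{\min}^t\le L_i^t\le \gamma L_{\min}^t$, so $\beta_{\min}^t\le\beta_i^t\le\gamma\beta_{\min}^t$. From the definition \eqref{defn:clip-operator}, the map $\beta\mapsto\|\cT^\delta_\beta(v)\|=\min\{\|v\|/\beta,\delta\}$ is nonincreasing in $\beta$ and satisfies $\|\cT^\delta_{\gamma\beta}(v)\|\ge \gamma^{-1}\|\cT^\delta_\beta(v)\|$. Hence
\[
\|\cT^\delta_{\beta_i^t}(y_i^t)\|\ge \gamma^{-1}\|\cT^\delta_{\beta_{\min}^t}(y_i^t)\|
\]
for every agent $i$.

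\textbf{Step 2: replace $y_i^t$ by $\nabla f(\bbxt)$.} The map $v\mapsto \cT^\delta_\beta(v)$ is the projection of $v/\beta$ onto the ball of radius $\delta$. It is therefore $1/\beta$-Lipschitz, and consequently $\|\cT^\delta_\beta(v)\|\ge \|\cT^\delta_\beta(w)\|-\|v-w\|/\beta$. Applying this with $v=y_i^t$, $w=\nabla f(\bbxt)$ and $\beta=\beta_{\min}^t$, and using $(a-b)^2\ge a^2/2-b^2$, gives
\[
\|\cT^\delta_{\beta_i^t}(y_i^t)\|^2\ge \frac{1}{2\gamma^2}\|\cT_{\beta_{\min}^t}^\delta(\nabla f(\bbxt))\|^2-\frac{1}{\gamma^2(\beta^t_{\min})^2}\|y_i^t-\nabla f(\bbxt)\|^2 .
\]
I then decompose the difference as
\[
y_i^t-\nabla f(\bbxt)=\bigl(y_i^t-(\bbyt)^\top\bigr)+\bigl((\bbyt)^\top-\omega^t\bigr)+\Bigl(\omega^t-\nabla f(\bbxt)\Bigr).
\]
The last term is bounded using the local smoothness on $\cX_j^{l(j,t)}$ (Remark \ref{remark:Stag-point-membership}) and Jensen/mean-squared smoothness: $\|\omega^t-\nabla f(\bbxt)\|^2\le \frac1m\sum_i (L_i^t)^2\|x_i^t-\bbxt\|^2\le \frac{\gamma^2 (L^t_{\min})^2}{m}\|\bone\bbxt-\bxt\|_F^2$. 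A factor-3 Young inequality then splits the squared sum into the three pieces.

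\textbf{Step 3: average and collect.} Averaging over $i$ and multiplying by $\frac{(\theta-2)L_{\min}^t}{2}$ produces the target term $-\frac{(\theta-2)L_{\min}^t}{4\gamma^2}\|\cT^\delta_{\beta^t_{\min}}(\nabla f(\bbxt))\|^2$, which is better than the claimed $\frac{1}{8\gamma^2}$ and leaves room for slack. The error terms have coefficients of order $\frac{3(\theta-2)}{2\gamma^2\theta^2 L_{\min}^t}$ on $\frac1m\|\bone\bbyt-\byt\|_F^2$ and on $\|\omega^t-(\bbyt)^\top\|^2$, and of order $\frac{3(\theta-2)L_{\min}^t}{2\theta^2}$ on $\frac1m\|\bone\bbxt-\bxt\|_F^2$. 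Since $(\theta-2)/\theta^2\le 1/8$ for all $\theta>2$, and $\gamma\ge1$, each extra coefficient is at most $\frac{3}{16}$ times the natural scale. This does not yet match the target, because the coefficients $\frac{9}{16}=\frac12+\frac1{16}$ require the extra contributions to be at most $\frac1{16}$.

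\textbf{Main obstacle: tightening the constants.} The bookkeeping of constants is where I expect the difficulty. My plan is to use a weighted Young inequality, for instance $(a-b)^2\ge a^2/4-\frac13 b^2$, which yields exactly the $\frac{1}{8\gamma^2}$ factor, and to use the fact that the final descent coefficient has slack $\frac14$ versus $\frac18$. This should push each error coefficient down to $\frac{(\theta-2)}{\theta^2}\cdot\frac12\le\frac1{16}$, with the $\gamma^2$ in the $x$-term cancelling against the $\gamma^{-2}$ factor from Step 1. I will tune these weights so that all three error terms land exactly at $\frac{1}{16}$ added to the $\frac12$ from Lemma \ref{lemma:stag-descent}.
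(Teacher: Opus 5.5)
Your argument is correct and is essentially the paper's proof. Your refined split $(a-b)^2\ge a^2/4-b^2/3$ is exactly the paper's Young weighting $4$ versus $\frac{4}{3}$, and spending half of the descent term of Lemma~\ref{lemma:stag-descent} is the paper's weighted sum with factor $\frac{(\theta-2)L_{\min}^t}{8m}$, which gives $\frac{1}{8\gamma^2}$ and extra error coefficients $\frac{\theta-2}{2\theta^2}\le\frac{1}{16}$. The only difference is ordering: the paper applies Young and non-expansiveness at each agent's own $\beta_i^t$ and only afterwards uses $\gamma\|\ctdbi(\nabla f(\bbxt))\|\ge\|\cT^\delta_{\beta_{\min}^t}(\nabla f(\bbxt))\|$, bounding the consensus term directly by $L_{\min}^t$ via Remark~\ref{remark:Stag-point-membership}, whereas you pass to $\beta_{\min}^t$ first and trade the extra $\gamma^{-2}$ against $L_i^t\le\gamma L_{\min}^t$, which is equivalent bookkeeping.
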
 
Both of the two descent inequalities \eqref{lm:stag-descent-main} and \eqref{lm:stag-descent-gradmap-main} will be useful in the latter analysis. Next, we present the bound on the last term of \eqref{lm:stag-descent-gradmap-main}. 

\begin{lemma}
\label{lemma:bound variance}
Suppose \texttt{CGTVR-stag} takes a constant batch size $|\cB_i^t|\equiv B_i, i\in[m]$, then we have 
\begin{equation}
\label{lm:bound variance-main-1}
\mathbb{E}\left[\left\|\omega^t-(\bbyt)^\top\right\|^2/L_{\min}^t\right]\leq \mathbb{E}\bigg[\frac{\gamma}{m}\sum_{i=1}^m \sum_{r=l(i,t)}^{t-1}\frac{L_{\min}^{r}}{B_i}\left\|x_i^{r+1}-x_i^{r}\right\|^2\bigg].
\end{equation}
\end{lemma}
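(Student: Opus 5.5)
The plan is to use the gradient-tracking invariant to reduce $\omega^t-(\bbyt)^\top$ to an average of per-agent SARAH estimation errors, and then bound each agent's error by the standard SARAH martingale argument, run only over that agent's current epoch.

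First, since $\bW^\top\bone=\bone$, the tracker update $\byte=\bW\byt+\bGte-\bGt$ together with $y_i^{-1}=g_i^{-1}=0$ gives $\bbyt=\bar{\mathbf{G}}^t$ for all $t$. Hence
$$\omega^t-(\bbyt)^\top=\frac1m\sum_{i=1}^m e_i^t,\qquad e_i^t:=\nabla f_i(x_i^t)-g_i^t.$$
By Jensen, $\|\omega^t-(\bbyt)^\top\|^2\le \frac1m\sum_i\|e_i^t\|^2$. I would use this crude bound rather than exploit cross-agent independence, because the target bound already carries the factor $1/m$ outside the sum.

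Second, fix agent $i$. At the check point $l(i,t)$ we have $e_i^{l(i,t)}=0$. For $l(i,t)<r+1\le t$ the recursion reads
$$e_i^{r+1}=e_i^r+\Big(\nabla f_i(x_i^{r+1})-\nabla f_i(x_i^r)\Big)-\frac{1}{B_i}\sum_{j\in\cB_i^{r+1}}\Big(\nabla f_{i,j}(x_i^{r+1})-\nabla f_{i,j}(x_i^r)\Big).$$
Conditioned on the past, the batch term is an unbiased estimate of the deterministic difference, so the cross term vanishes. With uniform sampling, either with or without replacement, this yields
$$\EE\|e_i^{r+1}\|^2\le\EE\|e_i^r\|^2+\frac{1}{B_i}\EE\Big[\frac1{n_i}\sum_j\|\nabla f_{i,j}(x_i^{r+1})-\nabla f_{i,j}(x_i^r)\|^2\Big].$$
One subtlety is that $l(i,t)$ is random, since it is a stopping time determined by the early-stop rule. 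I would handle this by writing $\|e_i^t\|^2=\sum_r\big(\|e_i^{r+1}\|^2-\|e_i^r\|^2\big)\mathbb{1}\{l(i,t)\le r\}$ and noting that the indicator is measurable with respect to the information available before $\cB_i^{r+1}$ is drawn. This preserves the martingale cancellation.

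Third, the points $x_i^r$ and $x_i^{r+1}$ with $l(i,t)\le r<t$ lie in $\cX_i^{l(i,t)}$ by Remark \ref{remark:Stag-point-membership}. Assumption \ref{assumption:RMSS} then bounds the mean-squared difference by $(L_i^{t})^2\|x_i^{r+1}-x_i^r\|^2$, because $L_i^t=L_i^{l(i,t)}=\bL(\cX_i^{l(i,t)})\ge\bL_i(\cX_i^{l(i,t)})$ is constant within the epoch.

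It remains to divide by $L_{\min}^t$ and convert $(L_i^t)^2/L_{\min}^t$ into $\gamma L_{\min}^r$. This conversion is the main obstacle. Proposition \ref{proposition:stager-dist-bound} controls ratios of local constants only for consecutive times, or equal times across agents, so iterating it over a whole epoch would produce $\gamma^{t-r}$. Instead I would compare the underlying sets directly. For $l(i,t)\le r\le t$, each check point $x_k^{l(k,r)}$ and $x_k^{l(k,t)}$ can be bridged to $x_i^{l(i,t)}$ through $x_i^r$ and $\bar{\mathbf{x}}^r$, using $\|x_i^r-x_i^{l(i,t)}\|\le d$ and Lemma \ref{lemma:clip-distance2mean}. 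This keeps all the relevant balls within Hausdorff distance $(c+1)\delta+d\le r_0$ of $\cX_i^{l(i,t)}$, so \eqref{eqn:ruc-r0} gives $L_i^t\le\gamma L_{\min}^r$.

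That handles one factor of $L_i^t$. The other factor should cancel against $1/L_{\min}^t$: if $L_i^t/L_{\min}^t$ is not exactly $1$, it is bounded by $\gamma$, and I expect the $\gamma$ in the statement to absorb only one factor. The cleanest route is therefore to move the division by $L_{\min}^t$ inside and use $L_i^t\le\gamma L_{\min}^r$ together with $L_i^t\le\gamma L_{\min}^t$. If that produces $\gamma^2$, I would instead check whether the intended reading bounds $L_i^t/L_{\min}^t$ by monotonicity of $\bL$ with nested sets.

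Summing over $r$ and $i$ and taking expectations then yields the claimed inequality.
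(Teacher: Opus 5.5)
Your reduction $\bbyt=\bar{\mathbf{G}}^t$, the Jensen step that gives the factor $1/m$, and the per-agent SARAH recursion are the same ingredients the paper imports from Sun et al.

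\textbf{Main gap: the constant.} You bound the mean-squared smoothness on $[x_i^r,x_i^{r+1}]$ by $(L_i^t)^2$. That leaves you with $(L_i^t)^2/L_{\min}^t$, and from this quantity you cannot get below $\gamma^2L_{\min}^r$. The ratio bounds available to you allow $L_i^t=L_i^r=\gamma L_{\min}^t$ together with $L_{\min}^r=L_{\min}^t$, and then $(L_i^t)^2/L_{\min}^t=\gamma^2L_{\min}^r$. Your nested-set fallback does not rescue this, since $\cX_i^{l(i,t)}$ is not nested with the other agents' balls.

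The missing idea is to apply Remark \ref{remark:Stag-point-membership} at time $r$ to \emph{every} agent $j$, not only $j=i$. The segment $[x_i^r,x_i^{r+1}]$ lies in $\cX_j^{l(j,r)}$ for all $j$, so $\bL([x_i^r,x_i^{r+1}])\le L_{\min}^r$ with no loss. Then RUC is needed for only one factor:
\[
\frac{(L_{\min}^r)^2}{L_{\min}^t}=L_{\min}^r\cdot\frac{L_{\min}^r}{L_{\min}^t}\le L_{\min}^r\cdot\frac{L_i^r}{L_{\min}^t}=L_{\min}^r\cdot\frac{L_i^t}{L_{\min}^t}\le\gamma L_{\min}^r .
\]
This uses $L_i^r=L_i^t$ for $l(i,t)\le r\le t$ (same epoch) and the equal-time bound of Proposition \ref{proposition:stager-dist-bound}. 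It is exactly the paper's argument, it yields the stated $\gamma$, and it makes your bridging argument for $L_i^t\le\gamma L_{\min}^r$ unnecessary.

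\textbf{Secondary issue: the random epoch start.} Your handling of it does not work as stated. The indicator of $\{l(i,t)\le r\}$ is the event that agent $i$ does not restart at any of the times $r+1,\dots,t$. The restart decision at time $s$ is made from $x_i^s$, which depends on the batches drawn at times $r+1,\dots,s-1$. So for $t>r+1$ this indicator is not known before $\cB_i^{r+1}$ is drawn, and conditioning does not kill the cross terms.

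For the same reason, you cannot take expectations first and divide by the random $L_{\min}^t$ afterwards. The weight $1/L_{\min}^t$ has to sit inside the expectation, where it is also not measurable at time $r$.

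The paper does not resolve this point either. It takes the weighted bound, with $\bL([x_i^r,x_i^{r+1}])^2/(B_iL_{\min}^t)$ inside the expectation, directly from the analysis of Sun et al. At the paper's level of detail you may cite that step. You should not present the false measurability claim as its justification.
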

The proof of this lemma is straightforward and is largely omitted. One only needs to slightly modify the analysis of \cite[Lemma 1, Eq.(39)-(47)]{sun2020improving} while using the local smoothness constants to get 
$$\EE\left[\|\omega^t-(\bbyt)^\top\|^2/L_{\min}^t\right] \leq \mathbb{E}\bigg[\frac{1}{m}\sum_{i=1}^m \sum_{r=l(i,t)}^{t-1}\frac{\bL([x_i^r,x_i^{r+1}])^2}{B_iL_{\min}^t}\left\|x_i^{r+1}-x_i^{r}\right\|^2\bigg],$$
where the detailed computation is omitted for succinctness. Due to Remark \ref{remark:Stag-point-membership}, the line segment $[x_i^r,x_i^{r+1}]\subseteq\cX_j^{l(j,r)}$ for any $i,j$. Hence we can replace $\bL([x_i^r,x_i^{r+1}])$ with $L_{\min}^r$. Now let us fix an arbitrary $i$ and some index $r\geq l(i,t)$ in the above summation, then using Proposition \ref{proposition:stager-dist-bound}, we further know that 
$\frac{L_{\min}^r}{L_{\min}^t}\leq \frac{L_{i}^r}{L_{\min}^t} = \frac{L_{i}^t}{L_{\min}^t} \leq \gamma.$ Hence we complete the proof of this lemma. Next, we present a handy result on the summation of error bounds in Lemma \ref{lemma:bound variance} under the parameter selection $B_i = \tau_i$.
\begin{align}
\label{eqn:summation}
&\sum_{t=0}^{T-1}\mathbb{E}\bigg[\sum_{i=1}^m \sum_{r=l(i,t)}^{t-1}\frac{L_{\min}^{r}}{B_i}\left\|x_i^{r+1}-x_i^{r}\right\|^2\bigg]   \overset{(i)}{\leq}  \sum_{t=0}^{T-1}\mathbb{E}\bigg[\sum_{i=1}^m  \frac{L_{\min}^{t}\tau_i}{B_i}\left\|x_i^{t+1}-x_i^{t}\right\|^2\bigg]\nonumber \\
& \quad\qquad\qquad\qquad\qquad\qquad\overset{(ii)}{=}  \mathbb{E}\bigg[\sum_{t=0}^{T-1}  L_{\min}^{t}\left\|\bxte-\bxt\right\|^2_F\bigg]\\
& \quad\qquad\qquad\qquad\qquad\qquad\overset{(iii)}{\leq}  \mathbb{E}\bigg[\sum_{t=0}^{T-1}  L_{\min}^{t}\Big((1+4)\|\bxte-\bxt\|_F^2 + (1+1/4)\|(\bW-I)(\bxt-\bone\bbxt)\|_F^2\Big)\bigg]\nonumber\\
&\quad\qquad\qquad\qquad\qquad\qquad\overset{(iv)}{\leq} \mathbb{E}\bigg[\sum_{t=0}^{T-1}  5L_{\min}^{t}\Big(\left\|\mathbf{1} \bar{\mathbf{x}}^t-\mathbf{x}^t\right\|^2_F+\left\|\ctdb\left(\byt\right)\right\|^2_F\Big)\bigg].    \nonumber
\end{align} 
In the above argument, (i) is because no matter how the epochs are partitioned for the iteration steps $\{0,1,\cdots,T-1\}$, each $\|\xite-\xit\|^2$ will be summed for at most $\tau_i$ times, (ii) is because we choose $B_i=\tau_i$, (iii) is due to the update rule \eqref{eqn:CGT}, and (iv) is due to Assumption \ref{assumption:mix-spectral}. 

Next, combining Lemmas \ref{lemma:stag-descent}, \ref{lemma:stag-descent-gradmap}, \ref{lemma:bound variance}, and the standard mixing results on the consensus errors $\|\bone\bbxt-\bxt\|^2_F$ and $\|\bone\bbyt-\byt\|^2_F$, we may consider the potential function 
\[
P_t := \mathbb{E}\Big[f(\bar{\mathbf{x}}^{t}) + \alpha_1 L_{\min}^t\big\|\mathbf{x}^t - \mathbf{1} \bar{\mathbf{x}}^t\big\|_F^2 + {\alpha_2}\big\|\mathbf{y}^t - \mathbf{1} \bar{\mathbf{y}}^t\big\|_F^2/{L_{\min}^t}\Big],
\]
for which the following lemma is satisfied by \texttt{CGTVR-stag}, with proof presented in Appendix \ref{appdx:lemma:potential}.
\begin{lemma}
\label{lemma: potential descent 2}
Under the same Assumptions of Lemma \ref{lemma:stag-descent}, and we select the batch size and epoch length s.t. $B_i = \tau_i$ for all $i\in[m]$. Then there is an appropriate set of constants $\alpha_1, \alpha_2, \theta_0>0$ s.t. 
\begin{align}
\sum_{t=0}^{T-1}\!\EE\bigg[\!\frac{(\theta\!-\!2)L_{\min}^t}{32}\big\|\cT_{\!\beta_{\min}^t}^\delta\!\!(\nabla f(\bbxt))\big\|^2 \!+\! \frac{\theta L_{\min}^t}{4m}\big\|\ctdb\!(\byt)\big\|_F^2 \!+\! \frac{3L_{\min}^t}{4m}\|\bxt\!-\!\bone\bbxt\|_F^2\!+\!\frac{\|\byt\!-\!\bone\bbyt\|_F^2}{16mL_{\min}^t}\bigg] \!\leq P_0\!-\!P_T\nonumber.
\end{align}
for arbitrarily selected parameter $\theta\geq\theta_0$. 
\end{lemma}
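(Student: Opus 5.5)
The plan is a Lyapunov argument. We combine the two descent inequalities with one-step recursions for the two consensus errors. The step sizes change over time, so each recursion is written with the weights $L_{\min}^t$ and $1/L_{\min}^t$, and the RUC ratio bounds of Proposition \ref{proposition:stager-dist-bound} are used to compare $L_{\min}^{t+1}$ with $L_{\min}^t$.

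\textbf{Step 1 (weighted $x$-consensus recursion).} As in the proof of Lemma \ref{lemma:clip-distance2mean}, Young's inequality with parameter $\zeta=1-\eta$ gives
$$\|\bxte-\bone\bbxte\|_F^2\le (2-\eta)\eta^2\|\bxt-\bone\bbxt\|_F^2+\tfrac{2}{1-\eta}\|\ctdb(\byt)\|_F^2 .$$
Multiply by $L_{\min}^{t+1}$ and use $L_{\min}^{t+1}\le\gamma L_{\min}^t$, which holds since for each $j$ we have $L_j^{t+1}\le\gamma L_i^t$ by Proposition \ref{proposition:stager-dist-bound}. The choice $\gamma\le\frac{1+\eta}{2\eta}$ was made exactly so that $\gamma(2-\eta)\eta^2\le\rho:=\frac{1+\eta}{2}\cdot(2-\eta)\eta<1$. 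This yields a contraction of $L_{\min}^t\|\bxt-\bone\bbxt\|_F^2$ plus a term $O(L_{\min}^t\|\ctdb(\byt)\|_F^2/(1-\eta))$.

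\textbf{Step 2 ($y$-consensus recursion).} From $\byte=\bW\byt+\bGte-\bGt$ we get
$$\|\byte-\bone\bbyte\|_F^2\le \tfrac{1+\eta^2}{2}\|\byt-\bone\bbyt\|_F^2+\tfrac{C}{1-\eta}\|\bGte-\bGt\|_F^2 .$$
Dividing by $L_{\min}^{t+1}\ge L_{\min}^t/\gamma$ again keeps a contraction factor below one, by the same choice of $\gamma$. To bound $\EE\|\bGte-\bGt\|_F^2$, split by agent:
\begin{itemize}
\item If agent $i$ continues its epoch, the SARAH increment has second moment at most $\bL_i^2\|x_i^{t+1}-x_i^t\|^2$ on the segment, which lies in $\cX_j^{l(j,t)}$ by Remark \ref{remark:Stag-point-membership}. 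This gives $O(\gamma^2(L_{\min}^t)^2\|x_i^{t+1}-x_i^t\|^2)$.
\item If agent $i$ restarts, we add the accumulated SARAH error of agent $i$, bounded as in Lemma \ref{lemma:bound variance} by $\sum_{r=l(i,t)}^{t}\frac{(L^r_{\min})^2}{B_i}\|x_i^{r+1}-x_i^r\|^2$ up to $\gamma$ factors.
\end{itemize}
After dividing by $L_{\min}^t$, everything becomes $L_{\min}^t$-weighted squared increments. Summing over $t$ and using the counting argument \eqref{eqn:summation} with $B_i=\tau_i$ bounds these by $5L_{\min}^t(\|\bxt-\bone\bbxt\|_F^2+\|\ctdb(\byt)\|_F^2)$.

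\textbf{Step 3 (telescoping).} Average the two descent inequalities: half of \eqref{lm:stag-descent-gradmap-main} plus half of \eqref{lm:stag-descent-main}. This keeps a $\frac{(\theta-2)L_{\min}^t}{16\gamma^2}$ gradient-mapping term and a $\frac{3(\theta-2)L_{\min}^t}{4m}\|\ctdb(\byt)\|_F^2$ term. Add $\alpha_1$ times Step 1 and $\alpha_2$ times Step 2, sum over $t=0,\dots,T-1$, take expectations, and bound the variance term using Lemma \ref{lemma:bound variance} together with \eqref{eqn:summation}. The constants are chosen in this order:
\begin{itemize}
\item $\alpha_1$ of order $1/(m(1-\rho))$, so that the contraction gap dominates both $\frac{9}{16m}$ and the $O(1/m)$ contributions from the variance and $y$-error terms.
\item $\alpha_2$ of order $1/(m(1-\eta))$, so that its gap leaves $\frac{1}{16m}\|\byt-\bone\bbyt\|_F^2/L_{\min}^t$.
\item Finally $\theta_0$ large, so that $(\theta-2)/m$ absorbs the $\|\ctdb(\byt)\|_F^2$ contributions, which are $O(\alpha_1/(1-\eta)+\alpha_2/(1-\eta)+\gamma/m)$ times $L_{\min}^t$, while keeping $\theta/(4m)$.
\end{itemize}
The $\gamma^{-2}\ge 1/4$ factor turns the gradient coefficient into $(\theta-2)/32$.

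\textbf{Main obstacle.} The hard step is Step 2: controlling $\|\bGte-\bGt\|_F^2$ at staggered restarts. The full-gradient reset creates a jump equal to the accumulated estimator error, so it must be tied back to past increments with the correct $L_{\min}^r$ weights, and the RUC comparisons across different epochs must be tracked. I would handle this by expanding the reset term as $\nabla f_i(x_i^{t+1})-g_i^t=(\nabla f_i(x_i^{t+1})-\nabla f_i(x_i^t))+(\nabla f_i(x_i^t)-g_i^t)$ and reusing the martingale bound behind Lemma \ref{lemma:bound variance}.
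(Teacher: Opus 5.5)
Your route is the paper's: the same potential $P_t$ with weights $\alpha_1L_{\min}^t$ and $\alpha_2/L_{\min}^t$, and the comparisons $L_{\min}^t/\gamma\le L_{\min}^{t+1}\le\gamma L_{\min}^t$ from Proposition \ref{proposition:stager-dist-bound}. You also use the same continue/restart split of $\bGte-\bGt$ with the decomposition $\nabla f_i(x_i^{t+1})-g_i^t=(\nabla f_i(x_i^{t+1})-\nabla f_i(x_i^t))+(\nabla f_i(x_i^t)-g_i^t)$, then Lemma \ref{lemma:bound variance} and the counting bound \eqref{eqn:summation}. However, two steps fail as written.

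\textbf{Step 2 contraction.} Your Young split with factor $\frac{1+\eta^2}{2}$ is not compatible with $\gamma=\min\{2,\frac{1+\eta}{2\eta}\}$. After dividing by $L_{\min}^{t+1}\ge L_{\min}^t/\gamma$ the factor becomes $\gamma\frac{1+\eta^2}{2}$. This equals $1+\eta^2>1$ for every $\eta\le 1/3$; for example it is $17/16$ at $\eta=1/4$. So there is no contraction, and the $\alpha_2$-gap you rely on in Step 3 does not exist. The constant $\gamma$ was calibrated for a contraction factor $\eta$, via $\gamma\eta\le\frac{1+\eta}{2}$. You therefore need the same split $\zeta=1-\eta$ as in your Step 1: $\|\byte-\bone\bbyte\|_F^2\le\eta\|\byt-\bone\bbyt\|_F^2+\frac{2}{1-\eta}\|\bGte-\bGt\|_F^2$. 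This is what the paper uses for both sequences, and it gives the gap $1-\gamma\eta\ge\frac{1-\eta}{2}$.

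\textbf{Step 3 gradient constant.} Averaging the two descent inequalities halves the gradient-mapping coefficient to $\frac{(\theta-2)L_{\min}^t}{16\gamma^2}$. Since $\gamma$ can equal $2$, this only gives $\frac{\theta-2}{64}$, not the claimed $\frac{\theta-2}{32}$. Instead, use Lemma \ref{lemma:stag-descent-gradmap} with weight one, as the paper does. Its coefficient is $\frac{\theta-2}{8\gamma^2}\ge\frac{\theta-2}{32}$. Its term $-\frac{(\theta-2)L_{\min}^t}{2m}\|\ctdb(\byt)\|_F^2$ already absorbs all the $\|\ctdb(\byt)\|_F^2$ contributions and still leaves $\frac{\theta}{4m}$ once $\theta\ge\theta_0$.

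\textbf{Order of constants.} This is a smaller point: $\alpha_2$ must be fixed before $\alpha_1$, not after. The tracking recursion passes through $\|\bxte-\bxt\|_F^2\le5(\|\bxt-\bone\bbxt\|_F^2+\|\ctdb(\byt)\|_F^2)$ and \eqref{eqn:summation}. It thereby feeds a term of size about $\frac{120\alpha_2}{1-\eta}L_{\min}^t\|\bxt-\bone\bbxt\|_F^2$ into the $x$-consensus coefficient. With $\alpha_2\sim\frac{1}{m(1-\eta)}$, this term is of order $\frac{1}{m(1-\eta)^2}$, not $O(1/m)$. Hence $\alpha_1$ must be of order $\frac{1}{m(1-\eta)^3}$. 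The paper takes $\alpha_2=\frac{5}{2(1-\eta)m}$ and then $\alpha_1=\frac{14}{(1-\eta)m}+\frac{600}{(1-\eta)^3m}$.

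With these three corrections your argument coincides with the paper's proof.
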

\noindent In the above lemma, the detailed values of $\alpha_1,\alpha_2$ and $\theta_0$ are presented in Appendix \ref{appdx:lemma:potential} for succinctness. We should notice that $P_0-P_T$ is upper bounded by a constant in that   
\begin{align*}
&P_0 = f(\bar{\bf x}^0) + \alpha_1L_{\min}^0\|{\bf x}^0-\bone\bar{\bf x}^0\|^2_F + \frac{\alpha_2\|{\bf y}^0-\bone\bar{\bf y}^0\|^2_F}{L_{\min}^0} = f(x^0) + \frac{5\sum_{i=1}^m\|\nabla f(x^0)-\nabla f_i(x^0)\|^2}{2m(1-\eta)L_{\min}^0},\\
&P_T = \mathbb{E}\bigg[f(\bar{\bf x}^T) + \alpha_1L_{\min}^T\|{\bf x}^0-\bone\bar{\bf x}^T\|^2_F + \frac{\alpha_2\|{\bf y}^T-\bone\bar{\bf y}^T\|^2_F}{L_{\min}^T}\bigg] \geq \mathbb{E}\big[f(x^T)\big]\geq f^*, 
\end{align*}  
where $f^*$ is defined by Assumption~\ref{ass:existence-minimizer}. This further implies the upper bound on the potential difference for all $T\geq0$ that
\begin{equation}
\label{defn:Delta_f}
P_0-P_T\leq f(x^0) - f^* + \frac{5\sum_{i=1}^m\|\nabla f(x^0)-\nabla f_i(x^0)\|^2}{2m(1-\eta)L_{\min}^0}=:\Delta_f<+\infty.
\end{equation}

\noindent Therefore, combining this upper bound with Lemma \ref{lemma: potential descent 2}, we know that $\EE[\|\bxt-\bone\bbxt\|_F^2]\leq \cO(1/T)$, which indicates that the agents can achieve consensus in a sublinear rate. However, due to the presence of clipping operator, it is not clear what 
$\EE\big[\|\cT_{\beta_{\min}^t}^\delta(\nabla f(\bbxt))\|^2\big]=\cO(1/T)$ stands for. Therefore, we present the following observation of this term. 
\begin{proposition}\label{proposition:clip-measure}
For any $\beta,\delta,\epsilon>0$, and any vector $u\in\RR^d$, we have \vspace{0.1cm}\\
\emph{\bf (i).} Suppose $\epsilon<\delta^2$, then $\|\cT_\beta^\delta(u)\|^2\leq \epsilon$ implies $u/\beta = \cT_\beta^\delta(u)$ and $\|u/\beta\|^2 \leq \epsilon.$ \vspace{0.1cm}\\
\emph{\bf (ii).} Suppose $\epsilon<\delta^2/2$, and $u$ is a random vector, then the bound $\EE\big[\|\cT_\beta^\delta(u)\|^2\big]\leq \epsilon$ implies that the event  $\mathcal{E}:=\{\omega:u/\beta = \cT_\beta^\delta(u)\}$ satisfies 
$${\rm Prob}(\mathcal{E}) \geq  1-\epsilon/\delta^2 = 1-\cO(\epsilon) \qquad\mbox{and}\qquad \EE\big[\|u/\beta\|^2\mid\mathcal{E}\big]\leq \epsilon+ 2\epsilon^2/\delta^2 = O(\epsilon).$$
\end{proposition}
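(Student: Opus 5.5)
The plan is to read everything off the two-case definition \eqref{defn:clip-operator} of $\cT_\beta^\delta$: the clipped norm is $\|\cT_\beta^\delta(u)\| = \min\{\|u\|/\beta,\delta\}$, since $\|u\|\le\beta\delta$ gives $\|u/\beta\|\le\delta$ and otherwise the norm equals exactly $\delta$. The argument then splits along the two regimes $\|u\|\le\beta\delta$ and $\|u\|>\beta\delta$.

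For part (i), I argue by contradiction. If $\|u\|>\beta\delta$, then $\|\cT_\beta^\delta(u)\|^2=\delta^2>\epsilon$, which contradicts the hypothesis. Hence $\|u\|\le\beta\delta$. In that regime $\cT_\beta^\delta(u)=u/\beta$, so $\|u/\beta\|^2=\|\cT_\beta^\delta(u)\|^2\le\epsilon$.

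For part (ii), the random version follows the same dichotomy.

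\textbf{Probability of $\mathcal{E}$.} On the complement $\mathcal{E}^c$ we have $\|u\|>\beta\delta$, so $\|\cT_\beta^\delta(u)\|^2=\delta^2$ there. Markov's inequality, or simply restricting the expectation to $\mathcal{E}^c$, gives
$$\delta^2\,{\rm Prob}(\mathcal{E}^c)\le\EE\big[\|\cT_\beta^\delta(u)\|^2\big]\le\epsilon.$$
Therefore ${\rm Prob}(\mathcal{E})\ge1-\epsilon/\delta^2$.

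\textbf{Conditional expectation.} On $\mathcal{E}$ we have $\|u/\beta\|^2=\|\cT_\beta^\delta(u)\|^2$. Hence
$$\EE\big[\|u/\beta\|^2\mid\mathcal{E}\big]=\frac{\EE\big[\|\cT_\beta^\delta(u)\|^2\mathbf{1}_{\mathcal{E}}\big]}{{\rm Prob}(\mathcal{E})}\le\frac{\epsilon}{1-\epsilon/\delta^2}.$$
The last step is a routine scalar estimate. Set $s=\epsilon/\delta^2$; the assumption $\epsilon<\delta^2/2$ means $s<1/2$. Then
$$\frac{1}{1-s}=1+\frac{s}{1-s}\le1+2s.$$
This yields the claimed bound $\epsilon(1+2\epsilon/\delta^2)=\epsilon+2\epsilon^2/\delta^2$.

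I expect no real obstacle. The only points needing care are that the hypothesis $\epsilon<\delta^2/2$ keeps ${\rm Prob}(\mathcal{E})$ bounded away from zero, so the conditioning is well defined, and that it makes the factor $(1-s)^{-1}\le 1+2s$ valid. The $\cO(\epsilon)$ statements then follow because $\delta$ is a fixed constant.
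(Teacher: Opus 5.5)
Your proof is correct and follows essentially the same route as the paper. Part (i) uses the fact that the clipped vector has norm exactly $\delta$ whenever clipping is active. Part (ii) splits $\EE[\|\cT_\beta^\delta(u)\|^2]$ over $\mathcal{E}$ and $\mathcal{E}^c$, where the clipped squared norm equals $\delta^2$, and finishes with $1/(1-s)\le 1+2s$ for $s\le 1/2$. Your explicit note that $\epsilon<\delta^2/2$ forces ${\rm Prob}(\mathcal{E})\ge 1/2$, so that the conditioning is well defined, is a small point the paper leaves implicit.
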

\begin{proof}
The proof of (i) is straightforward because the definition of clipping operator \ref{defn:clip-operator} directly states that  $u/\beta\neq\cT_\beta^\delta(u)$ implies $\|\cT_\beta^\delta(u)\|^2=\delta^2$, which is impossible when $\|\cT_\beta^\delta(u)\|^2\leq\epsilon<\delta^2$. To prove (ii), denote probability ${\rm Prob}(\mathcal{E}^c) = p$, then it suffices to notice that 
\begin{align*}
    \epsilon &\geq \EE\big[\|\cT_\beta^\delta(u)\|^2\big]\\
    & = (1-p)\EE\big[\|\cT_\beta^\delta(u)\|^2\mid\mathcal{E}\big] + p\EE\big[\|\cT_\beta^\delta(u)\|^2\mid\mathcal{E}^c\big] \\
    & =  (1-p)\EE\big[\|\cT_\beta^\delta(u)\|^2\mid\mathcal{E}\big] + p\delta^2,
\end{align*}
which immediately gives 
$$p\leq \epsilon/\delta^2\qquad\mbox{and}\qquad\EE[\|\cT_\beta^\delta(u)\|^2|\mathcal{E}]\leq \frac{\epsilon}{1-\epsilon/\delta^2}\leq \epsilon(1+2\epsilon/\delta^2),$$ where we use the fact that $\frac{1}{1-x}\leq 1+2x$ when $x\in[0,1/2]$, which completes the proof. 
\end{proof}

As a consequence of the above proposition, we know that the behavior of $\cT_\beta^\delta(u)$ and $u/\beta$ are almost identical for small enough $\epsilon$. And we can define the following gradient mapping to measure the near-stationarity of the \texttt{CGTVR-stag}algorithm: 
\begin{equation}\label{defn:GradMap}
\cG(\bbxt) = \beta_{\min}^t\cdot\cT_{\beta_{\min}^t}^\delta(\nabla f(\bbxt))
\end{equation}
where we multiply the inverse stepsize $\beta_{\min}^t$ to cancel out the scaling in the clipping operator $\cT_{\beta_{\min}^t}^\delta(\cdot)$. As a consequence, we obtain the following theorem. 

\begin{theorem}\label{theorem:Iter-Complexity-Bounded-L}
Under the same Assumptions of Lemma \ref{lemma:stag-descent} and we select the parameters according to Lemma \ref{lemma: potential descent 2}. In addition, suppose there is an upper bound $\bar{L}>0$ such that $L_{\min}^t\leq\bar{L}, \forall t\geq0$, then  
\begin{equation}
    \label{thm:Iter-Comp-Bounded-L-0}
    \EE\big[\|\cG(\bar{\bf x}^{t_{\rm out}})\|^2\big] \leq \frac{32\theta^2\bar{L}\Delta_f}{(\theta-2)T}\qquad\mbox{and}\qquad\EE\Big[\frac{1}{m}\big\|x^{t_{\rm out}}_i-\frac{1}{m}\sum_{j=1}^mx_j^{t_{\rm out}}\big\|^2\Big]\leq\frac{4\Delta_f}{3T},
\end{equation}
where the time point $t_{\rm out}$ is randomly selected from $\{0,1,\cdots, T-1\}$.
\end{theorem}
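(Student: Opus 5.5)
The plan is to read both bounds directly off Lemma~\ref{lemma: potential descent 2} combined with the potential gap bound \eqref{defn:Delta_f}. First I drop the nonnegative second and fourth terms on the left-hand side of Lemma~\ref{lemma: potential descent 2}, keeping only the gradient-mapping term and the consensus term, and use $P_0-P_T\le\Delta_f$. This gives
\begin{equation*}
\sum_{t=0}^{T-1}\EE\Big[\frac{(\theta-2)L_{\min}^t}{32}\big\|\cT_{\beta_{\min}^t}^\delta(\nabla f(\bbxt))\big\|^2 + \frac{3L_{\min}^t}{4m}\|\bxt-\bone\bbxt\|_F^2\Big]\le \Delta_f .
\end{equation*}

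For the gradient mapping, the definition \eqref{defn:GradMap} gives $\|\cG(\bbxt)\|^2=(\beta_{\min}^t)^2\|\cT_{\beta_{\min}^t}^\delta(\nabla f(\bbxt))\|^2=\theta^2(L_{\min}^t)^2\|\cT\|^2$. I rewrite this as $\theta^2 L_{\min}^t\cdot\big(L_{\min}^t\|\cT\|^2\big)$ and bound the first factor $L_{\min}^t$ by $\bar L$, which uses the theorem's extra hypothesis. The summed inequality then yields $\sum_t\EE\|\cG(\bbxt)\|^2\le \theta^2\bar L\cdot 32\Delta_f/(\theta-2)$. Since $t_{\rm out}$ is uniform on $\{0,\dots,T-1\}$ and independent of the iterates, $\EE\|\cG(\bar{\bf x}^{t_{\rm out}})\|^2=\frac1T\sum_t\EE\|\cG(\bbxt)\|^2$, and the first claim follows.

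For consensus, I use $L_{\min}^t\ge1$ from Assumption~\ref{assumption:RMSS}, which gives $\frac{3}{4m}\sum_t\EE\|\bxt-\bone\bbxt\|_F^2\le\Delta_f$. Because $\|\bxt-\bone\bbxt\|_F^2=\sum_i\|x_i^t-\frac1m\sum_jx_j^t\|^2$ dominates every single row term, each $\frac1m\EE\|x_i^t-\frac1m\sum_j x_j^t\|^2$ is at most $\frac1m\EE\|\bxt-\bone\bbxt\|_F^2$. Averaging over the random $t_{\rm out}$ then gives the bound $4\Delta_f/(3T)$.

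The only delicate point is the mixed scaling between $L_{\min}^t$ and $\beta_{\min}^t$. It is resolved by pulling out exactly one factor of $L_{\min}^t$ through $\bar L$, while the other factor stays absorbed in the potential term. I also need to check that the lower bound $L_{\min}^t\ge1$ legitimately removes the weight in the consensus term.
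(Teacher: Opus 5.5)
Your proposal is correct and follows the route the paper intends. The paper gives no separate proof of this theorem, but in Section~\ref{section:discussion} it uses the same manipulation: drop the nonnegative terms of Lemma~\ref{lemma: potential descent 2}, use $P_0-P_T\le\Delta_f$, rewrite $\frac{(\theta-2)L_{\min}^t}{32}\|\cT_{\beta_{\min}^t}^\delta(\nabla f(\bbxt))\|^2=\frac{\theta-2}{32\theta^2L_{\min}^t}\|\cG(\bbxt)\|^2$, and bound $L_{\min}^t\le\bar L$; your appeal to $L_{\min}^t\ge1$ from Assumption~\ref{assumption:RMSS} is the right justification for the consensus bound, and both constants you obtain match the statement.
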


Therefore, it takes $T=\mathcal{O}(1/\epsilon)$ iterations to make the expected squared gradient mapping $\epsilon$-small. 
Beside iteration complexity, an important issue is to bound the sample consumption during the algorithm. However, due to the existence of the early stop mechanism (Algorithm \ref{algorithm:ATVRGT}, Line 14-15), a worst scenario could be that the agents restart their local epoch in every iteration and full gradient is always computed. In this case, no improvement can be expected compared to the naive full gradient deterministic variant of \texttt{CGTVR-stag}. Next, we prove that the restart cannot happen too frequently, using which an expected sample complexity can be obtained. 

\begin{theorem}\label{theorem:restart}
Suppose Assumptions \ref{assumption:RMSS} - \ref{assumption:mix-spectral} hold, we set $B_i = \tau_i = \sqrt{n_i}$ for each agent $i\in[m]$, and each agent $i$ executes Algorithm \ref{algorithm:ATVRGT} Line 15 and early restarts the epoch for $K_i$ times among all the iterations,  if we select $\delta$ and $d$ such that  $c\delta\leq d/2$, then 
$$\EE[K_i]\leq \frac{16\Delta_f\tau_i}{\theta d^2} = \cO(\sqrt{n_i})$$
for $\forall i\in[m]$. If in addition that the upper bound $\bar{L}$ introduced in Theorem \ref{theorem:Iter-Complexity-Bounded-L} exists, then combined with \eqref{thm:Iter-Comp-Bounded-L-0}, we can upper bound the expected total sample consumption for finding some point $\bar{\bf x}^{t_{\rm out}}$ such that  $\EE[\|\cG(\bar{\bf x}^{t_{\rm out}})\|^2]\leq \cO(\epsilon)$ by  $\sum_{i=1}^m \cO\big(n_i^{1.5}+n_i^{0.5}\epsilon^{-1}\big)$. 
\end{theorem}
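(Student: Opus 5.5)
The plan is to charge each early restart of agent $i$ to a fixed amount of squared movement of its iterates, and then bound the total squared movement through the potential-descent inequality of Lemma \ref{lemma: potential descent 2}.

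\textbf{Step 1: each early restart forces a large displacement.} Consider an early restart at time $t+1$ in an epoch that began at $l=l(i,t)$. Line 14 then gives $\|x_i^{t+1}-x_i^{l}\|\geq d$, and the epoch contains at most $\tau_i$ steps. Expanding with the compact update \eqref{eqn:CGT} and Lemma \ref{lemma:clip-distance2mean},
$$x_i^{t+1}-x_i^{l}=\big(x_i^{t+1}-\bar{\bf x}^{t+1}\big)-\big(x_i^{l}-\bar{\bf x}^{l}\big)+\sum_{r=l}^{t}\big(\bar{\bf x}^{r+1}-\bar{\bf x}^{r}\big).$$
The two consensus terms are each at most $c\delta/2$, so together at most $c\delta\le d/2$. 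The average increments are $\bar{\bf x}^{r+1}-\bar{\bf x}^r=-\frac1m\mathbf{1}^\top\ctdb(\mathbf{y}^r)$. Hence
$$\sum_{r=l}^{t}\big\|\tfrac1m\mathbf{1}^\top\ctdb(\mathbf{y}^r)\big\|\geq d/2.$$
By Cauchy--Schwarz over at most $\tau_i$ terms, and using $\|\frac1m\mathbf 1^\top V\|^2\le \frac1m\|V\|_F^2$, we get
$$\sum_{r=l}^{t}\frac1m\|\ctdb(\mathbf{y}^r)\|_F^2\geq \frac{d^2}{4\tau_i}.$$

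\textbf{Step 2: sum over restarts.} Epochs of agent $i$ are disjoint time windows. Summing Step 1 over the $K_i$ early-restart epochs therefore gives
$$K_i\,\frac{d^2}{4\tau_i}\le \sum_{t=0}^{T-1}\frac1m\|\ctdb(\mathbf{y}^t)\|_F^2 .$$

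\textbf{Step 3: bound the right-hand side.} Lemma \ref{lemma: potential descent 2} together with \eqref{defn:Delta_f} gives
$$\sum_t\EE\Big[\tfrac{\theta L^t_{\min}}{4m}\|\ctdb(\mathbf{y}^t)\|_F^2\Big]\le\Delta_f .$$
Since $L^t_{\min}\ge1$ by Assumption \ref{assumption:RMSS}, this yields $\sum_t\EE\big[\frac1m\|\ctdb(\mathbf y^t)\|_F^2\big]\le 4\Delta_f/\theta$. Taking expectations in Step 2 then gives $\EE[K_i]\le 16\Delta_f\tau_i/(\theta d^2)=\cO(\sqrt{n_i})$.

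\textbf{Step 4: sample complexity.} Each epoch costs $n_i$ samples at its check point plus $B_i=\sqrt{n_i}$ samples per subsequent step. The number of epochs is at most $\lceil T/\tau_i\rceil+K_i$, since non-early restarts occur only after $\tau_i$ steps. The expected cost is therefore
$$n_i\big(T/\sqrt{n_i}+1+\EE[K_i]\big)+T\sqrt{n_i}=\cO\big(n_i^{1.5}+\sqrt{n_i}\,T\big).$$
Taking $T=\cO(\epsilon^{-1})$ from Theorem \ref{theorem:Iter-Complexity-Bounded-L} (this is where $\bar L$ is used) and summing over $i$ gives $\sum_i\cO(n_i^{1.5}+n_i^{0.5}\epsilon^{-1})$.

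The main obstacle is Step 1: the restart test is on the local iterate, whose motion includes mixing and is not purely the agent's own clipped steps. I handle this by routing through the average iterate and absorbing the consensus drift using $c\delta\le d/2$. A further subtlety is that the restart counts are random and correlated with the iterates. I avoid that by proving the inequality of Step 2 pathwise and only then taking expectations.
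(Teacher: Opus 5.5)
Your proposal is correct and follows essentially the same route as the paper. It lower-bounds the squared movement of the averaged iterate over each early-terminated epoch by $d^2/(4\tau_i)$, using Lemma \ref{lemma:clip-distance2mean}, $c\delta\le d/2$, and Cauchy--Schwarz (the paper's ``Jensen'' step), and caps the total by $4\Delta_f/\theta$ via Lemma \ref{lemma: potential descent 2} with $L_{\min}^t\ge 1$; your explicit disjointness-of-epochs and pathwise-then-expectation remarks only make precise what the paper leaves implicit.
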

\begin{proof}
Using the fact that $\bbxte-\bbxt = (\bone/m)^\top\ctdb(\byt)$, Assumption \ref{assumption:RMSS}, and  Lemma \ref{lemma: potential descent 2}, we get
\begin{equation}\label{thm:restart-1}
\sum_{t=0}^{T-1}\EE\big[\|\bbxte-\bbxt\|^2\big]\leq \frac{1}{m}\sum_{t=0}^{T-1}\EE\big[\|\ctdb(\byt)\|^2\big]\leq \frac{4\Delta_f}{\theta}.
\end{equation}
If an agent $i$ start an epoch at time $r$ and execute an early restart at time $r+s$ with $s<\tau_i$. Then Line 14 of Algorithm \ref{algorithm:ATVRGT} suggests that $\|x_i^{r+s}-x_i^r\|\geq d$, together with Lemma \ref{lemma:clip-distance2mean}, we obtain 
$$\|\bar{\bf x}^{r+s}-\bar{\bf x}^{r}\|\geq \|x_i^{r+s}-x_i^r\|-\|x_i^{r+s}-\bar{\bf x}^{r+s}\| - \|x_i^r-\bar{\bf x}^r\| \geq d-c\delta\geq \frac{d}{2}.$$
Jensen's inequality for quadratic function implies that
$$\sum_{t=r}^{r+s-1}\|\bbxte-\bbxt\|^2\geq\frac{\|\bar{\bf x}^{r+s}-\bar{\bf x}^{r}\|^2}{s} \geq \frac{d^2}{4\tau_i}.$$ 
That is, whenever an early restart happens at agent $i$, at least a $d^2/4\tau_i$ constant is contributed to the summation \eqref{thm:restart-1}. And therefore,  we have $$\frac{d^2}{4\tau_i}\cdot \EE[K_i] \leq \frac{4\Delta_f}{\theta},$$ which proves the first bound. Observe that the total number of epochs for agent $i$ in $T$ iterations is upper bounded by $K_i+\lceil T/\tau_i\rceil$. That is, at most this number of full local gradient is computed for agent $i$, while all the other steps are using $B_i$-size batches. The total sample consumption of the algorithm can then be upper bounded by 
$$\sum_{i=1}^m\Big((K_i+\lceil T/\tau_i\rceil)\cdot n_i + TB_i\Big) = \sum_{i=1}^m\cO\big( n_i^{1.5} + \sqrt{n_i}T\big) = \sum_{i=1}^m\cO\big(n_i^{1.5}+n_i^{0.5}\epsilon^{-1}\big),$$
where the last step is due to Theorem \ref{theorem:Iter-Complexity-Bounded-L}, which suggests that $T = \cO(\epsilon^{-1})$.
\end{proof}

\section{Extended discussion on local smoothness constants}
\label{section:discussion}
In Section \ref{section:cvg-analysis}, we assume all the local Lipschitz constants $L_{\min}^t$ to be upper bounded by some constant $\bar{L}$ for the ease of presentation, which commonly holds when the objective function is coercive, or more generally, level bounded. However, this simplification leaves two important questions unanswered. First, how does the method differ from the standard non-adaptive gradient tracking method that directly estimates and uses $\bar{L}$ as their ``global'' Lipschitz constant? Second, what if the problem does not have bounded level sets so that the $\bar{L}$ does not exist? In this section, we discuss these questions. To decouple the stochasticity between the correlated random variables $L_{\min}^t$ and $\|\mathcal{G}(\bar{x}^t)\|$, let us combine Lemma \ref{lemma: potential descent 2} and \eqref{defn:GradMap} to obtain that 
\begin{align}
\sum_{t=0}^{+\infty} \EE\bigg[\frac{\theta-2}{32\theta^2 L_{\min}^t}\|\mathcal{G}(\bar{\bx}^t)\|^2 + \frac{\theta L_{\min}^t}{4m}\big\|\ctdb(\byt)\big\|_F^2\bigg] \leq \Delta_f<+\infty\nonumber,
\end{align}
where we let $T\to+\infty$ in Lemma \ref{lemma: potential descent 2} because it holds for any $T\geq1$, while the upper bound \eqref{defn:Delta_f} is always true. Note that all the random variables in the expectations are all non-negative, then the monotone convergence theorem \cite{yeh2006real} implies that we can exchange the summation (infinite) and the expectation. Then applying Markov's inequality to the inequality after exchanging summation and expectation gives 
\begin{align}
\label{eqn:Markov-Bound}
\sum_{t=0}^{+\infty} \left(\frac{\theta-2}{32\theta^2 L_{\min}^t}\|\mathcal{G}(\bar{\bx}^t)\|^2 + \frac{\theta L_{\min}^t}{4m}\big\|\ctdb\!(\byt)\big\|_F^2\right) \leq \frac{\Delta_f}{p}  \quad\text{w.p.}\quad 1-p.
\end{align}

Next, we can proceed our analysis based on this bound.

\subsection{Discussion given finite upper bound $\bar{L}$}
By assuming the existence of a large enough $\bar{L}<+\infty$ so that the $L_{\min}^t$ in 
inequality \eqref{eqn:Markov-Bound} are all upper bounded by $\bar{L}$, we can obtain for any $T\geq1$ that
\begin{equation}
\label{eqn:min-Lbar}
\min_{0\leq t\leq T-1}\|\cG(\bar{\bf x}^{t})\|^2 \leq \bar{L}\cdot \frac{32\theta^2\Delta_f}{p(\theta-2)T}\quad\text{w.p.}\quad 1-p,
\end{equation}
which is a high probability variant of Theorem \ref{theorem:Iter-Complexity-Bounded-L} that utilizes Markov's inequality. First of all, we should admit that, in the worst case, this bound is tight because one can always consider quadratic functions whose local Lipschitz constant $\bL_i(\cdot)\equiv L$ is a constant. In this case, our adaptive selection of $L_i^t$ exhibits no difference from standard constant $L$ approaches. However, in some cases where a significant part of local $L_i^t$ are much smaller than the maximum $\max_t L_i^t$, the adaptive selection of $L_i^t$ can indeed achieve some advantage. In this situation, an alternative sharper bound can be derived from \eqref{eqn:Markov-Bound}:
\begin{equation}
\label{eqn:min-Harmoric}
\min_{0\leq t\leq T-1}\|\cG(\bar{\bf x}^{t})\|^2 \leq \frac{\sum_{t=0}^{T-1} \frac{1}{L_{\min}^t}\|\mathcal{G}(\bar{\bx}^t)\|^2}{\sum_{t=0}^{T-1} \frac{1}{L_{\min}^t}} \leq \frac{T}{\sum_{t=0}^{T-1}\frac{1}{L_{\min}^t}}\cdot  \frac{32\theta^2\Delta_f}{p(\theta-2)T} \quad\text{w.p.}\quad 1-p.
\end{equation}
Note that through harmonic-arithmetic inequality, one must have 
$$\frac{T}{\sum_{t=0}^{T-1}\frac{1}{L_{\min}^t}} \leq \frac{1}{T}\sum_{t=0}^{T-1} L_{\min}^t \leq \max_{0\leq t\leq T-1} L_{\min}^t\leq \bar{L}$$
and the equality can only be achieved when $L_{\min}^0 = \cdots = L_{\min}^{T-1} = \bar{L}$. That is, an ``effective'' Lipschitz constant in the upper bound is the harmonic average of local Lipschitz constants, which can be much smaller than their common upper bound $\bar{L}$. In fact, in the first experiment on quadratic inverse problem in Section \ref{section:numerical}, where the maximum local Lipschitz constant is much larger than the average, \eqref{eqn:min-Harmoric} can provide a much sharper bound than \eqref{eqn:min-Lbar}, explaining the potential advantage of adaptively estimating $L_i^t$.

\subsection{Discussion without finite upper bound $\bar{L}$}

We should note that although an upper bound $\bar{L}$ is assumed to exist, it is not required a prior in the implementation of the algorithm due to the exploitation of local smoothness constants. When the problem is coercive, the existence of such an $\bar{L}$ is often satisfied by practical applications. Yet, theoretically, there indeed exist corner cases where such an $\bar{L}$ does not exists. This can only happen in  case where the iteration sequence, although with monotonically decreasing potential function $P_t$, diverges to infinity. For example, one may consider the following hand-crafted two-dimensional instance 
$$\min_{x_1,x_2}\,\, f(x_1,x_2):=\frac{1}{\ln (x_1^2+2)} + (\sin^2 x_1^2 - x_2^2)^2.$$
Clearly, a global optimal solution does not exist as the infimum $0$ is only approached (but not obtained) when $|x_1|\to+\infty$. In the meantime, a direct computation shows that $\|\nabla^2f(x_1,x_2)\|\to+\infty$, indicating an unbounded local smoothness constant.

In this case, we cannot simply argue the existence of a finite $\bar{L}<+\infty$ to upper bound all the locally estimated Lipschitz constants. However, akin to the analysis in \cite[Section 4]{zhang2025stochastic}, it is still possible to bound the local Lipschitz constants before finding an  $\epsilon$-solution, as stated in the next proposition.  
\begin{proposition}
\label{thm:bounded-region}
Under the setting of Lemma \ref{lemma:stag-descent} while taking $\theta\geq4$, and we select the parameters according to Lemma \ref{lemma: potential descent 2}. Let $T_\epsilon := \inf\{ t : \|\mathcal{G}(\bbxt)\|^2 \le \epsilon\}$
be the random time point when the algorithm first hits an $\epsilon$-small squared gradient mapping. Then w.p. $1-p$, it holds for all $t<T_\epsilon$ and $i\in[m]$ that 
$$\big\|x_i^t-\bar{\bx}^{0}\big\|
\le
\frac{8\,\Delta_f}{\sqrt{\epsilon}\cdot p}+\frac{\sqrt{2m}\delta}{1-\eta}\qquad\mbox{and}\qquad L_i^t
\leq  
\bL_i\left(B\left(\bar{\bx}^0,\frac{8\Delta_f}{\sqrt{\epsilon}\cdot p} + \frac{\sqrt{2m}\delta}{1-\eta}+r_0\right)\right).$$ 
\end{proposition}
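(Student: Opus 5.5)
The plan is to start from the high-probability bound \eqref{eqn:Markov-Bound}. On the event of probability $1-p$ where it holds, I use an AM–GM pairing of its two terms to bound the path length of the averaged iterates $\bar{\bx}^t$ before $T_\epsilon$. For every $t<T_\epsilon$ we have $\|\cG(\bbxt)\|^2>\epsilon$. Since $\bbxte-\bbxt=-(\bone/m)^\top\ctdb(\byt)$, Jensen gives $\|\bbxte-\bbxt\|^2\le\frac1m\|\ctdb(\byt)\|_F^2$. AM–GM then yields
$$\frac{\theta-2}{32\theta^2L_{\min}^t}\|\cG(\bbxt)\|^2+\frac{\theta L_{\min}^t}{4m}\|\ctdb(\byt)\|_F^2\;\ge\;2\sqrt{\frac{\theta-2}{128\,\theta}}\,\|\cG(\bbxt)\|\,\|\bbxte-\bbxt\|.$$
The factor $L_{\min}^t$ cancels here, which is exactly why the unknown, possibly unbounded local constants do not enter. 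With $\theta\ge4$ we have $(\theta-2)/\theta\ge1/2$, so the right-hand side is at least $\frac18\|\cG(\bbxt)\|\,\|\bbxte-\bbxt\|>\frac{\sqrt\epsilon}{8}\|\bbxte-\bbxt\|$ for $t<T_\epsilon$.

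Summing over $t<T_\epsilon$ and applying \eqref{eqn:Markov-Bound} gives, on the same event,
$$\sum_{t=0}^{T_\epsilon-1}\|\bbxte-\bbxt\|\le\frac{8\Delta_f}{\sqrt\epsilon\,p}.$$
The triangle inequality then bounds $\|\bar{\bx}^{t}-\bar{\bx}^0\|$ by the same quantity for all $t\le T_\epsilon$. This event is a single event that holds simultaneously for all $t$, so no union bound is needed.

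Next I pass from the average to each local iterate. I invoke the refined version of Lemma~\ref{lemma:clip-distance2mean}, $\|x_i^t-(\bbxt)^\top\|\le\|\bxt-\bone\bbxt\|_F\le \sqrt{2m}\,\delta/(1-\eta)=c\delta/2$, which holds deterministically for all $t$. Combining it with the path-length bound gives the first claim $\|x_i^t-\bar{\bx}^0\|\le \frac{8\Delta_f}{\sqrt\epsilon p}+\frac{\sqrt{2m}\delta}{1-\eta}$.

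For the second claim, note that $L_i^t=\bL_i(\cX_i^{l(i,t)})$ with $\cX_i^{l(i,t)}=B(x_i^{l(i,t)},r_0)$, and that $l(i,t)\le t<T_\epsilon$. The check point $x_i^{l(i,t)}$ therefore satisfies the same distance bound, so $\cX_i^{l(i,t)}\subseteq B\big(\bar{\bx}^0,\frac{8\Delta_f}{\sqrt\epsilon p}+\frac{\sqrt{2m}\delta}{1-\eta}+r_0\big)$. Monotonicity of $\bL_i$ from Assumption~\ref{assumption:RMSS} then gives the bound. If the algorithm uses $\bL$ rather than $\bL_i$ in Line 7, the same argument applies with $\bL$.

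I expect the main obstacle to be justifying the use of \eqref{eqn:Markov-Bound} up to the random time $T_\epsilon$, together with keeping the constants aligned. The pathwise series bound controls all $t$ simultaneously, so restricting to $t<T_\epsilon$ is harmless, and the only care needed is the $\theta\ge4$ constant $\frac18$ in the AM–GM step.
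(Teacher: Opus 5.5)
Your proposal is correct and follows essentially the same route as the paper's proof. You apply AM--GM termwise to \eqref{eqn:Markov-Bound} so that $L_{\min}^t$ cancels and $\theta\geq 4$ gives the factor $1/8$, truncate at $T_\epsilon$ to bound the path length $\sum_{t<T_\epsilon}\|\bar{\bx}^{t+1}-\bar{\bx}^t\|$, and then use Lemma~\ref{lemma:clip-distance2mean} for the local iterates and monotonicity of $\bL_i$ for the ball around the check point. Your explicit treatment of the check point $x_i^{l(i,t)}$, and your observation that Line 7 of Algorithm~\ref{algorithm:ATVRGT} actually uses $\bL$ rather than $\bL_i$, are slightly more careful than the paper's write-up but do not change the argument.
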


\begin{proof}

Applying the relationship between 
$\|\bbxte-\bbxt\|^2$ and $\|T^\delta_{\beta_t}(\byt)\|^2$ to \eqref{eqn:Markov-Bound} yields w.p. $1-p$ that
\begin{eqnarray*}
    \frac{\Delta_f}{p} & \geq & 
\sum_{t=0}^{+\infty} \bigg(
\frac{\theta-2}{32\theta^2 L_{\min}^t}\|\mathcal{G}(\bbxt)\|^2
+
\frac{\theta L_{\min}^t}{4}\|\bbxt-\bbxte\|^2\bigg)\\
& \geq & \sum_{t=0}^{+\infty}2\sqrt{\frac{\theta-2}{32\theta^2 L_{\min}^t}\cdot\frac{\theta L_{\min}^t}{4}\cdot\|\mathcal{G}(\bbxt)\|^2 \cdot \|\bbxt-\bbxte\|^2} \\
& = & \sum_{t=0}^{+\infty}\sqrt{\frac{\theta-2}{32\theta}} \|\mathcal{G}(\bbxt)\| \cdot \|\bbxt-\bbxte\| \\
&\geq& \frac{1}{8}  
\sum_{t=0}^{+\infty}
\|\mathcal{G}(\bbxt)\| \cdot \|\bbxt-\bbxte\|,
\end{eqnarray*} 
where the last line is due to $\theta\geq4$. Then truncating the infinite summation at $T_\epsilon$ yields
\[
\frac{8\Delta_f}{p} 
\ge 
\sum_{t=0}^{T_\epsilon-1}
\|\mathcal{G}(\bbxt)\|\cdot \|\bbxt-\bbxte\|
\ge  
\sum_{t=0}^{T_\epsilon-1}
\sqrt{\epsilon}\, \|\bxt-\bbxte\|  \quad \mbox{ w.p. }\quad 1-p.
\]
Therefore, we have with probability at least $1-p$ that 
\begin{align}
\max_{k<T_\epsilon} \max_{i\in[m]} \|x_i^k - \bar{\bx}^0\| & \leq  \max_{k<T_\epsilon}\max_{i\in[m]}\Big(\|\bar{\bx}^k - \bar{\bx}^0\| + \|\bar{\bx}^k - x_i^k\|\Big)\nonumber\\
& \leq \sum_{t=0}^{T_\epsilon-1}
\|\bar{\bx}^t-\bar{\bx}^{t+1}\| + \frac{\sqrt{2m}\delta}{1-\eta} \leq 
\frac{8\Delta_f}{\sqrt{\epsilon}\cdot p} + \frac{\sqrt{2m}\delta}{1-\eta}. \nonumber
\end{align} 
where we use the fact that the clipped update ensures that each local iterate to lie within  $\frac{\sqrt{2m}\delta}{1-\eta}$-distance of the averaged iterate. Therefore, for all pairs $(i,t)$ at which the local Lipschitz  constant $L_i^t = \bL_i(\cX_i^t)$ are estimated, we know 
$$\cX_i^t = B(x_i^t,r_0)\subseteq B\left(\bar{\bx}^0,\frac{8\Delta_f}{\sqrt{\epsilon}\cdot p} + \frac{\sqrt{2m}\delta}{1-\eta}+r_0\right).$$
Then using the monotonicity of the set function $\bL_i$ proves the proposition. 
\end{proof}

Note that the upper bound on $L_i^t$ only relies on the target accuracy $\epsilon$, the confidence level $p$, and predetermined algorithmic parameters. It shows that, even without a finite $\bar{L}$ for all $t\geq0$, one can still bound $L^t_i$ for $t<T_\epsilon$ with high probability, and hence provide a high probability bound for $T_\epsilon$.  Define $$R_{\epsilon,p}:=\frac{8\Delta_f}{\sqrt{\epsilon}\cdot p} + \frac{\sqrt{2m}\delta}{1-\eta}+r_0 = \mathcal{O}(\epsilon^{-1/2}p^{-1})$$
as the shorthand for the radius specified in Proposition \ref{thm:bounded-region}, we provide the following remark. 
\begin{remark}\label{remark:poly}
    Under the setting of Proposition \ref{thm:bounded-region}, and suppose in addition that the local smoothness constants exhibit a power growth in the sense that  $\mathbb{L}_i(B(\bar{\bf x}^0,R)) = \mathcal{O}(R^Q)$ for some $Q\geq0$. Then, combining this proposition and the analysis of Theorem \ref{theorem:Iter-Complexity-Bounded-L} implies that, with probability at least $1-p$, we have $T_\epsilon\leq\mathcal{O}(R^Q_{\epsilon,p}/\epsilon) = \mathcal{O}(\epsilon^{-(Q/2+1)}p^{-Q})$, matching the $\mathcal{O}(\epsilon^{-(Q/2+1)})$ complexity in \cite{zhang2024first} for deterministic and centralized problems.
\end{remark} 
Therefore, the worst-case iteration complexity of the method can depend on the growth rates of $\mathbb{L}_i$, which, to some degree, represent the level of non-Lipschitz smoothness. For general growth function that does not have an explicit form, one may replace the $R_{\epsilon,p}^Q$ factor in the above iteration complexity bound by $\max_i\{\mathbb{L}_i(B(\bar{\bf x}^0,R_{\epsilon,p}))\}$. Nevertheless, this additional $\epsilon$-dependency is inevitable for problems without globally Lipschitz gradients, see discussion in \cite{zhang2025stochastic}.

\section{Numerical experiments}  
\label{section:numerical}

In this section, we conduct numerical experiments on quadratic inverse and dimension reduction problems to evaluate \texttt{CGTVR-stag} and \texttt{CGTVR-sync}, with benchmarks being the established decentralized stochastic variance reduction methods, including \texttt{GT-VR} \cite{jiang2022distributed}, \texttt{GT-SARAH} \cite{xin2022fast}, and \texttt{D-GET} \cite{sun2020improving}.  \vspace{0.2cm}

\noindent{\bf Quadratic inverse problem. }  
First, we consider the distributed quadratic inverse problem studied in \cite{zhao2018distributed}, which is formulated as 
\begin{equation}
\label{eq:quardratic inverse}
\mathop{\rm minimize}_{{x}_1,\cdots, {x}_m \in\RR^d}\,\, \frac{1}{m}\sum_{i=1}^{m}\frac{1}{n_i}\sum_{j=1}^{n_i} \big(b_{i,j}-\langle a_{i,j},x_i\rangle^2\big)^2,  \qquad \st \quad x_i = x_j, \forall\, i,j\in[m],
\end{equation}
where $a_{i,j}\in\RR^d$ are sampling vectors and $b_{i,j}=\langle a_{i,j}, {x}_{\rm true}\rangle^2+\epsilon_{i,j}$ are measurement data  with ${x}_{\rm true}$ being the signal vector and $\epsilon_{i,j}$ being i.i.d. Gaussian noises.  We consider two 32 × 32 image signals, Barbara and Baboon, reshaped and then normalized as  1024-dimensional vectors, named $x_{\text{true}}$. We use Gaussian sampling scheme \cite{candes2015phase} where each $a_{i,j}\sim \mathcal{N}(0,I_d)$ is generated by standard Gaussian distribution and the additive Gaussian noise is generated by $\epsilon_{i,j}\sim \mathcal{N}(0,0.05)$. Each agent possesses a local dataset of $n_1 =\cdots=n_m=5000$ sampling vectors and the corresponding measurement data. We will denote this number as $n$ for simplicity. For this formulation, 
direct estimation of the local smoothness constant for each agent $i$ should be 
$$\bL_i(\cX) = \max\bigg\{\Big\|\frac{1}{n}\sum_{j=1}^n b_{i,j}a_{i,j}a_{i,j}^\top\Big\| + \frac{3}{n}\sum_{j=1}^n\|a_{i,j}\|^4\cdot\|x\|^2:x\in\cX\bigg\},$$
see e.g. \cite{bolte2018first}, which easily explodes to $10^{10}$ due to its dimension dependence. This estimation can be too conservative for practical computation. In fact, using the property of Gaussian distribution, with high probability, an approximated estimation of local smoothness constant will be around
$$\bL_i(\cX) \approx \max\bigg\{\Big\|\frac{1}{n}\sum_{j=1}^n b_{i,j}a_{i,j}a_{i,j}^\top\Big\| + 5\|x\|^2: x\in\cX\bigg\},$$
which corresponds to case (3) of Proposition \ref{proposition:growth}. In this case, according to Appendix \ref{appdx:proposition:growth}, $r_0$ can in fact be selected adaptively as $r_0\propto\|x^{\rm c}_i\|$ where $x^{\rm c}_i$ is the starting point of each local epoch, and then by Theorem \ref{theorem:Iter-Complexity-Bounded-L} and \ref{theorem:restart}, we can select $d = \frac{2r_0}{3}$ and $\delta = \frac{1r_0}{3(c+1)}$ so that $d+(c+1)\delta\le r_0$ and $c\delta<d$ for \texttt{CGTVR-stag} and select $\delta=r_0/(c+\tau)$ for \texttt{CGTVR-sync} so that \eqref{eqn:sync-epoch-radii} is satisfied. Here $c = \frac{2\sqrt{2m}}{1-\eta}$ and $\eta$ can be computed from the mixing matrix that depends on the input network, where we consider $m=16$ agents with three different network topologies, a ring network, a 4 × 4 grid network, and an Erdős–Rényi random graph $G(16,0.2)$, see \cite{renyi1959random}.

\vspace{0.25cm}

For \texttt{GT-VR} that uses a loopless variance reduction scheme associated with a restart probability $p$, the value $p>0.5$ suggested in \cite[Corollary IV.1]{jiang2022distributed} does not behave well. Such a value of $p$ suggests that a restart will happen every 2 steps on average, where a full batch needs to be taken. This makes algorithm essentially ``deterministic'' because the full batch is too frequently taken and the numerical performance will be very bad. In our experiment, we do not implement the default batch size and $p$ in \cite{jiang2022distributed}. Instead, we set $p= n^{-1/3}$ and a batch size of $B=n^{2/3}$ in \texttt{GT-VR}. For \texttt{GT-SARAH}, \texttt{D-GET}, \texttt{CGTVR-sync} and \texttt{CGTVR-stag}, all of them take the same batch size $B=\sqrt{n}$ and a (maximal) epoch length $\tau = B$. For the stepsize, \texttt{GT-VR}, \texttt{GT-SARAH}, and \texttt{D-GET}  are all tuned from a logarithmic grid $\{10^{-8},10^{-6},\cdots,10^{2}\}$ and we select the best performing one in the report. For \texttt{CGTVR-stag} and \texttt{CGTVR-sync}, we tune $\theta$ from the grid $\{10^{-3},10^{-1},\cdots,10^{3}\}$ and select the best performing one to report.  We report the numerical result in Figure \ref{fig:Baboon} and \ref{fig:Barbara}, where the curves of the squared gradient norm $\|\nabla f(\bar{x}_t)\|^2$ (top row) and objective value (middle row) versus the number of data passes are plotted, where a data pass is defined by 
$$\text{\# data pass}:=\frac{\text{\# of samples consumed}}{\text{full batch size}}.$$ 
We also report the consensus error which measured by $\sqrt{\frac{1}{n}\sum_{i=1}^n \|x_i^t-\bar{x}_t\|^2}$ where $
\bar{x}_t:=\frac{1}{n}\sum_{i=1}^n x_i^t$ stands for the average iterate.

\begin{figure}[H]
    \centering
    \begin{minipage}{0.32\linewidth}
    \centering
    \includegraphics[width=1\linewidth]{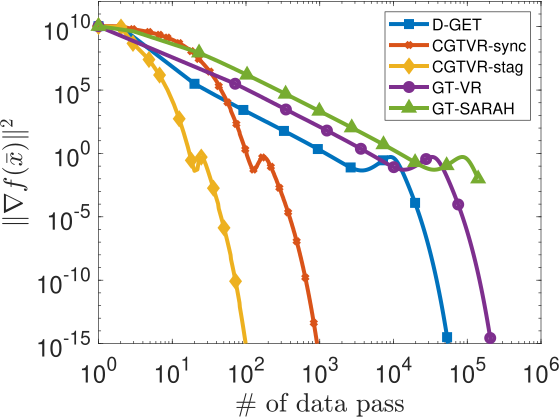} 
    \end{minipage}
    \hspace{0.001\linewidth}
     \begin{minipage}{0.32\linewidth}
    \centering
    \includegraphics[width=1\linewidth]{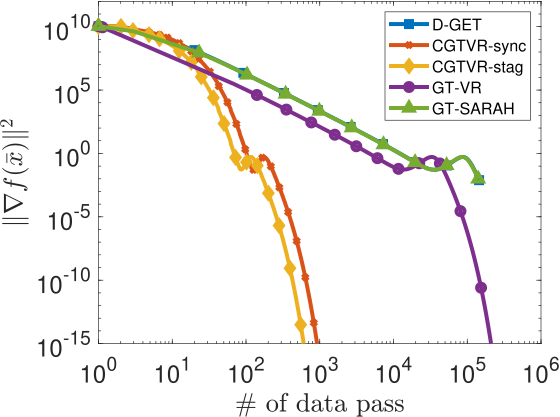} 
    \end{minipage}  
    \hspace{0.001\linewidth}
    \begin{minipage}{0.32\textwidth}
        \centering
        \includegraphics[width=\linewidth]{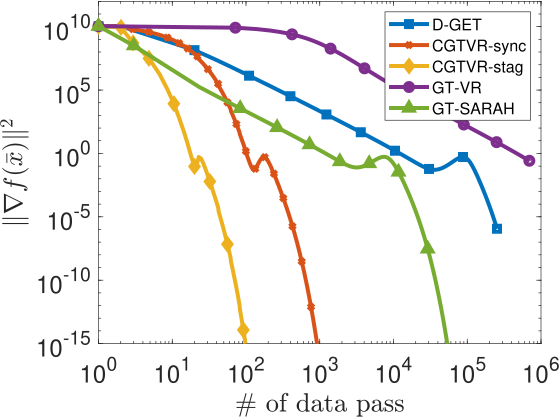} 
    \end{minipage}
    \begin{minipage}{0.32\linewidth}
    \centering
    \includegraphics[width=1\linewidth]{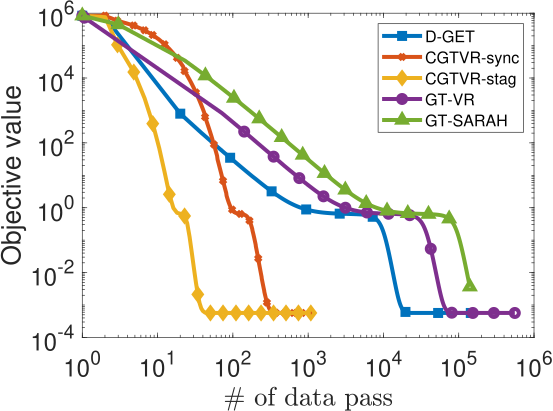}
    \end{minipage}  
    \hspace{0.001\linewidth}
    \begin{minipage}{0.32\linewidth}
    \centering
    \includegraphics[width=1\linewidth]{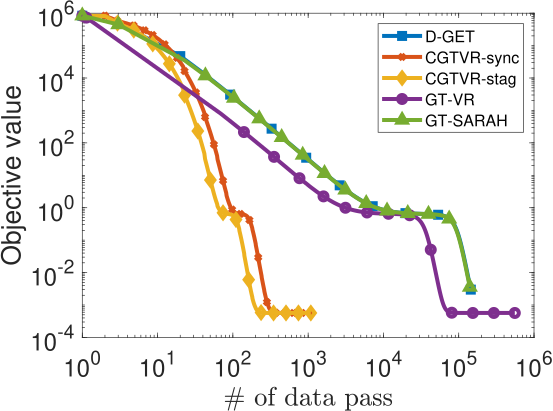}
    \end{minipage}  
    \hspace{0.001\linewidth}
    \begin{minipage}{0.32\linewidth}
    \centering
    \includegraphics[width=1\linewidth]{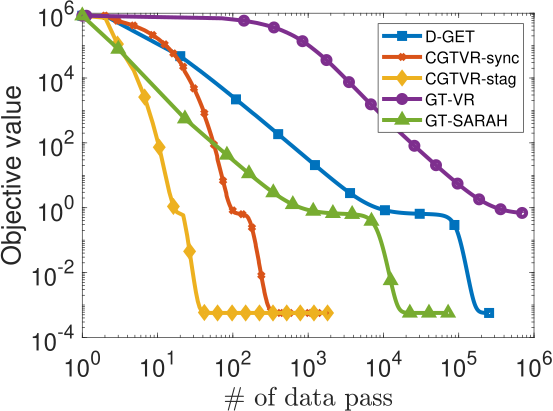}
    \end{minipage}  
    \begin{minipage}{0.32\linewidth}
    \centering
    \includegraphics[width=1\linewidth]{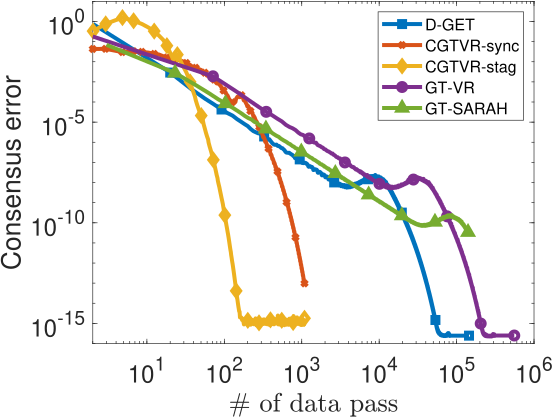}
    \end{minipage}  
    \hspace{0.001\linewidth}
    \begin{minipage}{0.32\linewidth}
    \centering
    \includegraphics[width=1\linewidth]{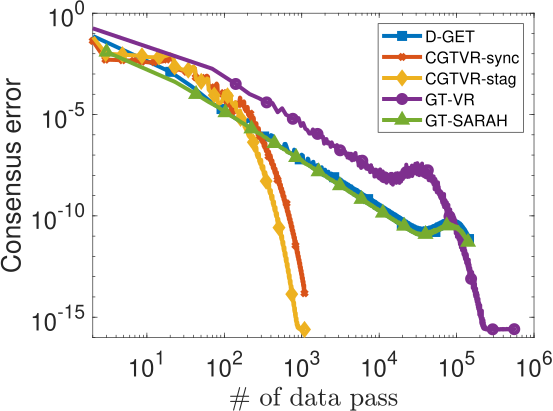}
    \end{minipage}  
    \hspace{0.001\linewidth}
    \begin{minipage}{0.32\linewidth}
    \centering
    \includegraphics[width=1\linewidth]{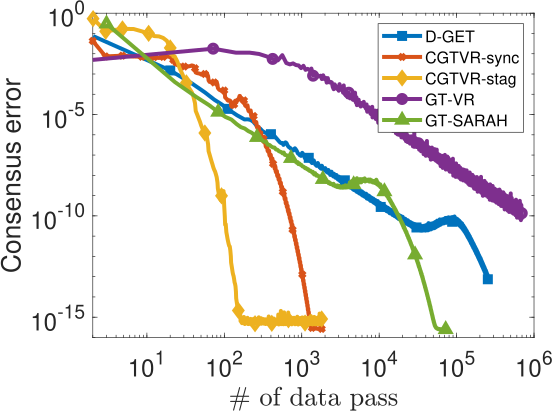}
    \end{minipage}  
    
    \caption{Result of Baboon data. Three columns stand for ring, grid, and random networks. }
    \label{fig:Baboon}
\end{figure}

In Figure~\ref{fig:Baboon} and Figure~\ref{fig:Barbara}, we compare the performance of \texttt{D-GET}, \texttt{CGTVR-sync}, \texttt{CGTVR-stag}, \texttt{GT-VR}, and \texttt{GT-SARAH} under ring, grid, and random network topologies. 

Among the three network topologies, \texttt{CGTVR-stag} and \texttt{CGTVR-sync} consistently demonstrate the most favorable convergence performance, often reducing the gradient to a sufficiently small size within a few hundred data passes. Except for the grid network experiments where \texttt{CGTVR-stag} and \texttt{CGTVR-sync} share similar performance, the staggered scheme outperforms the synchronized scheme in all other experiments. In terms of the compared benchmarks, after resetting $p$ and $B$ instead of using their suggested values in the corresponding paper, \texttt{GT-VR} performs comparable to the other two benchmarks;  \texttt{GT-SARAH} and \texttt{D-GET} share similar behavior in most cases, but \texttt{D-GET} is faster in some cases. Note that in this example, the local smoothness constant is significant better around the optimal solution than that of a Gaussian random initial point, \texttt{CGTVR-stag} can be much faster than the compared non-adaptive benchmarks so that we must use a log-log scale plot. This observation validates our theoretical comparison between \eqref{eqn:min-Lbar} and \eqref{eqn:min-Harmoric} in Section \ref{section:discussion}. To plot the point of initial solution with 0 data pass, we modify the x-axis by adding 1 data pass to every point on the plotted curves. Finally, for the consensus error curves, because all agents share the same consensus initialization with 0 consensus error, the first point (0,0) in these curves is not representable on a log--log axis and is therefore omitted. Consequently, the consensus-error curves start from the first nonzero entry. The performance of all compared algorithms in terms of the consensus error is similar to the gradient and function value curves, and the comments are thus omitted.

\vspace{0.5cm}
\noindent{\bf Dimension reduction problem. } Let \( {x}_{i,k} \in \mathbb{R}^d, \) denote the $k$-th local data point at agent $i$. We consider the following dimension reduction task:
\begin{equation}
\min_{U_i\in\RR^{d\times d'}} \sum_{i=1}^m \sum_{k=1}^{n_i} \left\| {x}_{i,k} - U_i U_i^\top {x}_{i,k} \right\|^2, \quad \text{subject to } {U}_i = U_j, \quad \forall\, i,j \in [m],
\end{equation}
where $d'<d$ and we aim to compress the data point $x\in\RR^d$ to a lower dimensional vector $x'=U^\top x\in\RR^{d'}$. Similar to the quadratic inverse problem, the local smoothness constant for each agent $i$ can be estimated by 
\begin{eqnarray}
\bL_i(\cX) &\leq& \max\bigg\{ \sum_{k=1}^{n_i}\|x_{i,k}\|^2\cdot\|UU^\top-I\|+2\|U\|^2_{F} : U\in\cX\bigg\}, \nonumber\\
 & \leq & \sum_{k=1}^{n_i}\|x_{i,k}\|^2+ \Big(\sum_{k=1}^{n_i}\|x_{i,k}\|^2+2\Big)\cdot\max_{U\in\mathcal{X}}\|U\|^2_F, \nonumber
\end{eqnarray}
which also corresponds to case (3) of Proposition \ref{proposition:growth},
and the parameter selections of $r_0, d$, and $\delta$ are also similar. 

We test the algorithms on a public Bank Customer Segmentation dataset from Kaggle, which contains 8,950 records of bank customers with $d=17$ features, and we aim to reduce the dimension to $d'=2$. We partition the dataset randomly among 6 agents with 4000, 250, 250, 250, 250, and 3950 samples, respectively.  We still test the algorithms on ring, 3 × 2 grid, and Erdős–Rényi random graph $G(6,0.2)$ network topologies.

\begin{figure}[H]
    \centering
    \begin{minipage}{0.32\linewidth}
    \centering
\includegraphics[width=1\linewidth]{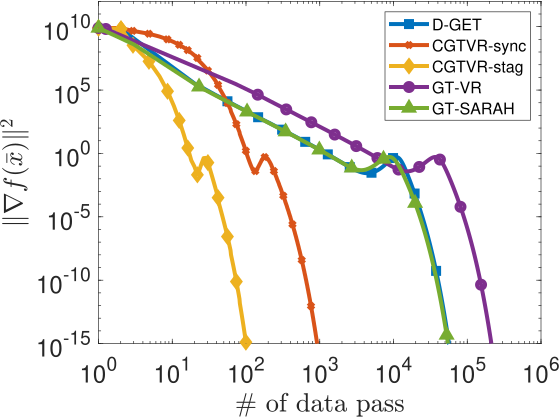} 
    \end{minipage}
    \hspace{0.001\linewidth}
     \begin{minipage}{0.32\linewidth}
    \centering
\includegraphics[width=1\linewidth]{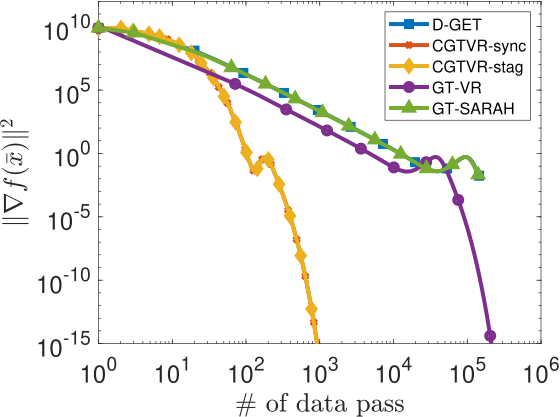} 
    \end{minipage}  
    \hspace{0.001\linewidth}
    \begin{minipage}{0.32\linewidth}
        \centering
\includegraphics[width=\linewidth]{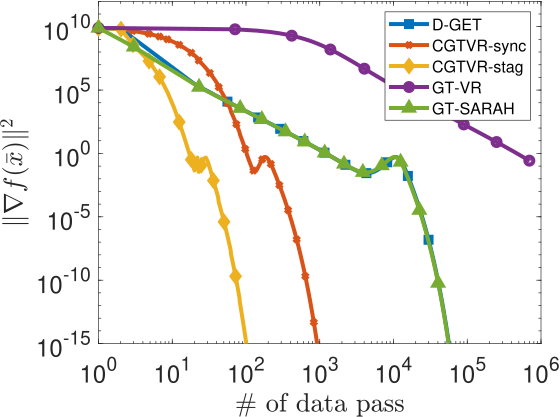} 
    \end{minipage}
    \begin{minipage}{0.32\linewidth}
    \centering
    
\includegraphics[width=1\linewidth]{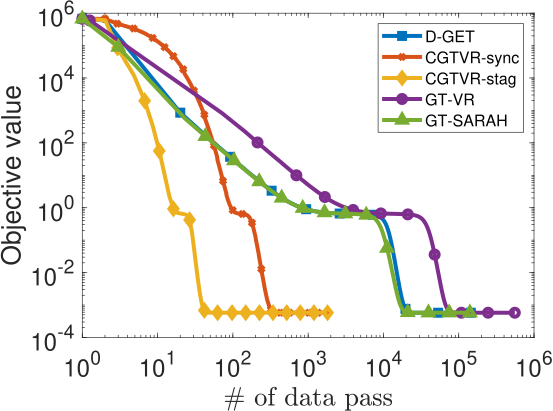}
    \end{minipage}  
    \hspace{0.001\linewidth}
    \begin{minipage}{0.32\linewidth}
    \centering
    \includegraphics[width=1\linewidth]{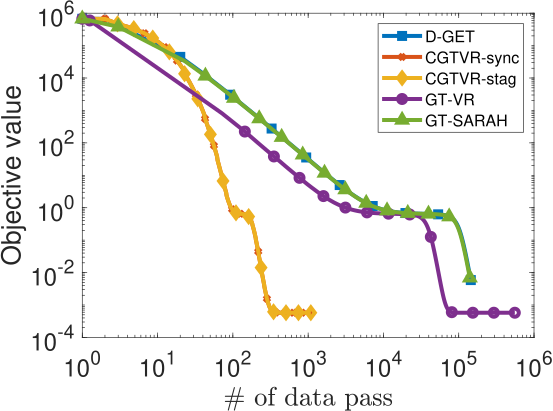}
    \end{minipage}  
    \hspace{0.001\linewidth}
    \begin{minipage}{0.32\linewidth}
    \centering
\includegraphics[width=1\linewidth]{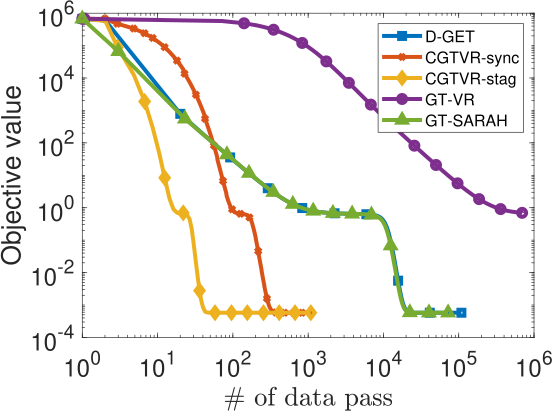}
    \end{minipage}  
    \begin{minipage}{0.32\linewidth}
    \centering
    
\includegraphics[width=1\linewidth]{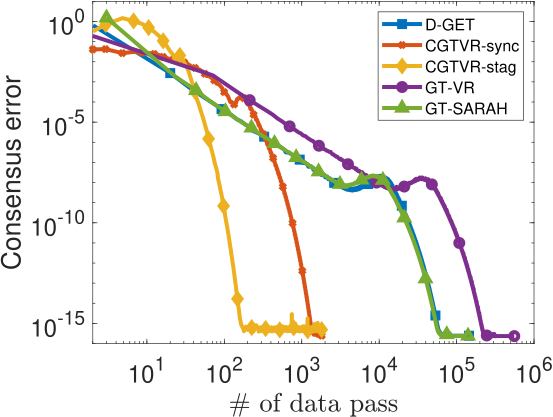}
    \end{minipage}  
    \hspace{0.001\linewidth}
    \begin{minipage}{0.32\linewidth}
    \centering
    \includegraphics[width=1\linewidth]{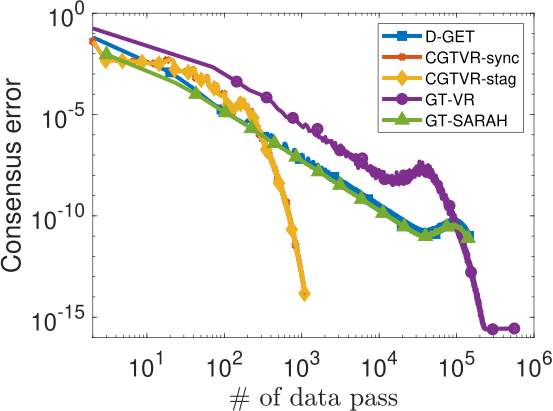}
    \end{minipage}  
    \hspace{0.001\linewidth}
    \begin{minipage}{0.32\linewidth}
    \centering
\includegraphics[width=1\linewidth]{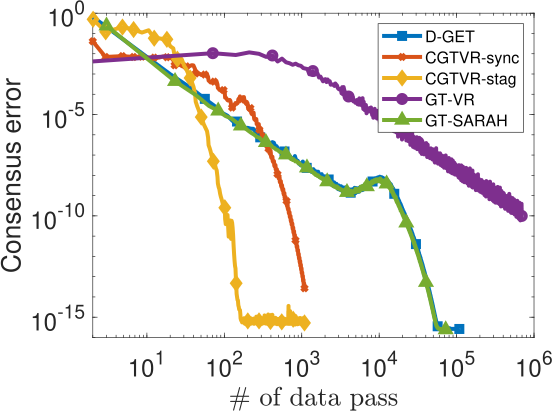}
    \end{minipage}  
     \caption{Result of Barbara data. Three columns stand for ring, grid, and random networks. }
    \label{fig:Barbara}
\end{figure}

The parameter selection is almost the same as that of quadratic inverse problem experiment, and hence we only emphasize the difference on batch size selection. For \texttt{GT-SARAH} and \texttt{D-GET}, the batch size is selected as $B = \max_i\sqrt{n_i}$ and $\tau = B$. For \texttt{GT-VR} and \texttt{CGTVR-stag} that allow staggered restarts at different agents, \texttt{GT-VR} implements a local batch size $B_i = {n_i}^{2/3}$ with local restart probability $p_i = n_i^{-1/3}$, \texttt{CGTVR-stag} implements a local batch size of $B_i = \sqrt{n_i}$ and epoch length of $\tau_i = B_i.$ For \texttt{CGTVR-sync}, the batch size is also selected as $B = \max_i\sqrt{n_i}$ and $\tau = B$.

\begin{figure}[H]
    \centering
    \begin{minipage}{0.32\linewidth}
    \centering    \includegraphics[width=1\linewidth]{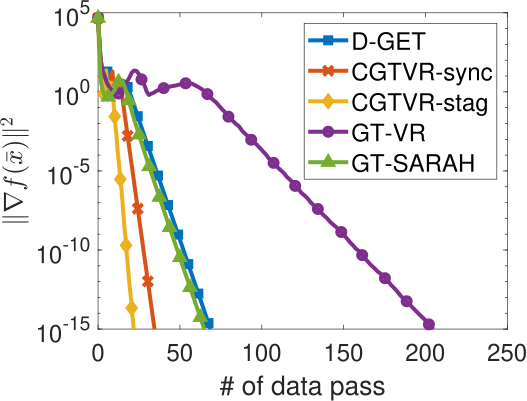} 
    \end{minipage}
    \hspace{0.001\linewidth}
     \begin{minipage}{0.32\linewidth}
    \centering
    \includegraphics[width=1\linewidth]{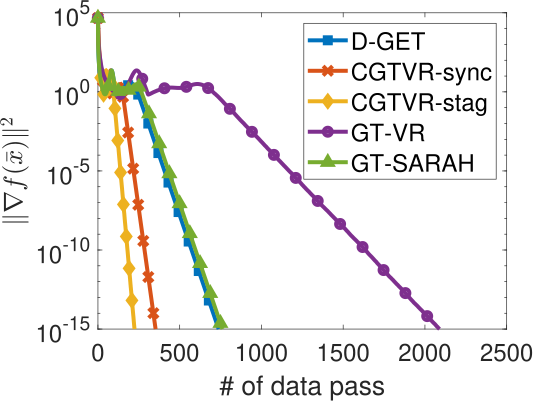} 
    \end{minipage}  
    \hspace{0.001\linewidth}
    \begin{minipage}{0.32\textwidth}
        \centering
        \includegraphics[width=\linewidth]{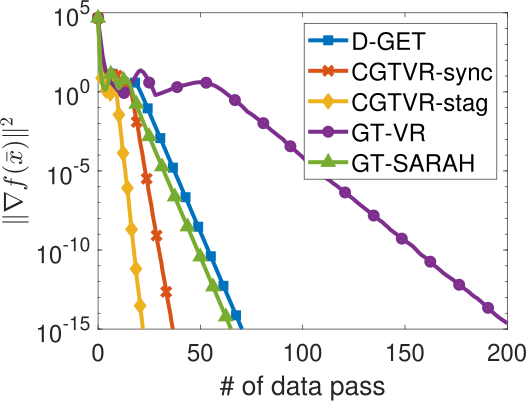} 
    \end{minipage}
    \vskip 0.25cm
    \begin{minipage}{0.32\linewidth}
    \centering
    \includegraphics[width=1\linewidth]{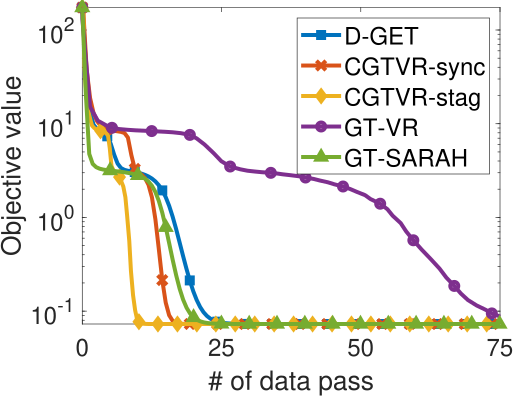}
    \end{minipage}  
    \hspace{0.001\linewidth}
    \begin{minipage}{0.32\linewidth}
    \centering
    \includegraphics[width=1\linewidth]{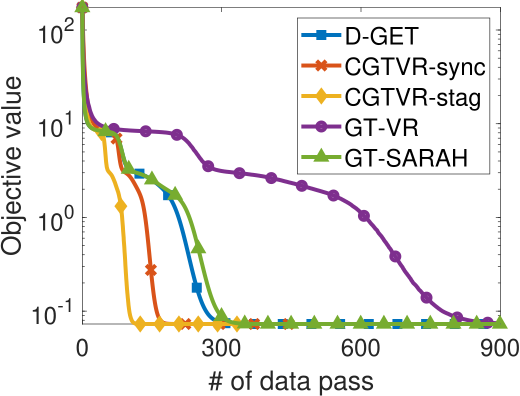}
    \end{minipage}  
    \hspace{0.001\linewidth}
    \begin{minipage}{0.32\linewidth}
    \centering
    \includegraphics[width=1\linewidth]{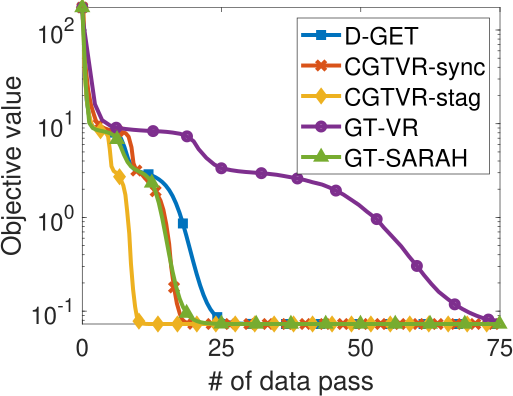}
    \end{minipage}  
    \begin{minipage}{0.32\linewidth}
    \centering
    \includegraphics[width=1\linewidth]{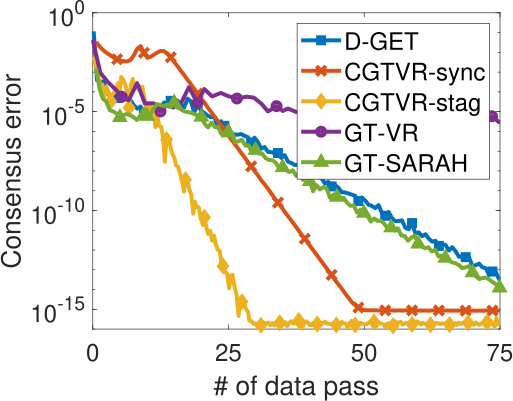}
    \end{minipage}  
    \hspace{0.001\linewidth}
    \begin{minipage}{0.32\linewidth}
    \centering
    \includegraphics[width=1\linewidth]{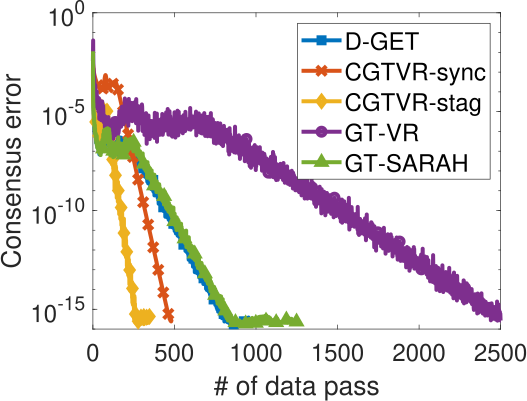}
    \end{minipage}  
    \hspace{0.001\linewidth}
    \begin{minipage}{0.32\linewidth}
    \centering
    \includegraphics[width=1\linewidth]{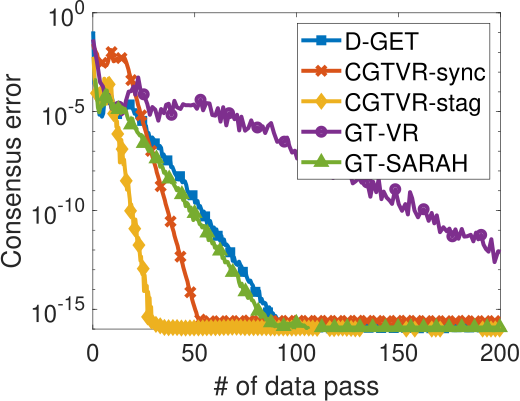}
    \end{minipage}  
     \caption{Bank Customer Segmentation Data}
    \label{fig:bank}
\end{figure}

As illustrated in Figure \ref{fig:bank}, the performance of  the four algorithms is similar to the previous quadratic inverse experiments, where \texttt{CGTVR-stag} and \texttt{CGTVR-sync} consistently outperform the non-adaptive benchmarks. The only difference is that even after resetting $p$ and $B$, the \texttt{GT-VR} algorithm still performs significantly worse than the other two benchmarks. We will not repeat the similar comments again for the sake of succinctness.

\section*{Data Availability} The datasets analyzed during the current study are as follows. 
\begin{itemize}
    \item The simulated datasets were generated using standard test images: \textbf{Baboon} and \textbf{Barbara}. These images are publicly available and widely used in the image processing literature.

    \item The real-world dataset used in this study is publicly available on Kaggle and can be accessed at: \\
\url{https://www.kaggle.com/datasets/sonugupta0932/bank-customer-segmentation}.
\end{itemize}

\section*{Declarations}
\noindent{\bf Conflict of interest} 
The authors have no relevant financial or non-financial interests to disclose.

\newpage
\appendix
\titleformat{\section}{\Large\bfseries}{Appendix \thesection:}{0.5em}{}

\section{Proof of Section \ref{section:RUC-regularity}}
\label{appdx:proposition:growth}

\subsection{Proof of Remark \ref{remark:growth}}
\label{appdx:remark:growth}
\begin{proof}
For this remark, we only need to verify the uniform continuity for each $\log(f)$ and then apply Proposition \ref{proposition:growth} to finish the proof.  First, for case (1) where $f(x)\equiv L$, the Lipschitz continuity is obvious. For case (2) where 
$f(x)=\log(c+\|x\|)$ with $c>1$ and define
$\phi(t):=\log\log(c+t)$ for $t\ge 0$. Then $
\phi'(t)=\frac{1}{(c+t)\log(c+t)}\le \frac{1}{c\log c}$ for all $t\ge 0,$
and $\phi$ is Lipschitz continuous on $[0,\infty)$ with constant $1/(c\log c)$. Together with the 1-Lipschitz continuity of the Euclidean norm function $\|\cdot\|$, we obtain the $1/(c\log c)$-Lipschitz continuity of the composite function 
$\log(f(x)) = \phi(\|x\|)$.

Next, let us consider the case (3) where $f(x)=L+\|x\|^\nu$ with $L>0$ and $\nu>0$. Similar to case (2), we only need to prove the Lipschitz (or H\"older) continuity of  
$\phi(t):=\log(L+t^\nu)$ on $\mathbb{R}_{+}$.

When $0<\nu\le 1$, it holds for all $t,s\ge 0$ that $|t^\nu-s^\nu|\le |t-s|^\nu$. Together with the fact that $|\log a-\log b|\le |a-b|/\min\{a,b\}$ for all $a,b>0$, we obtain
\[
|\phi(t)-\phi(s)|
=\big|\log(L+t^\nu)-\log(L+s^\nu)\big|
\le \frac{|t^\nu-s^\nu|}{L}
\le \frac{1}{L}|t-s|^\nu.
\]
Therefore $\phi$ is $1/L$-H\"older continuous with exponent $\nu$. Again, combined with the 1-Lipschitz continuity of Euclidean norm function, we know $\log(f) = \phi(\|x\|)$ is also H\"older continuous when $\nu\leq1$.

When $\nu>1$, one can directly compute that 
\[
\sup_{t\geq0}\psi'(t)=\sup_{t\geq0}\frac{\nu t^{\nu-1}}{L+t^\nu} = L^{-\frac{1}{\nu}}(\nu-1)^{1-\frac{1}{\nu}} < +\infty.
\]
That is, $\phi$ is globally Lipschitz continuous. Similar to previous argument, we know $\log(f(x)) = \phi(\|x\|)$ is Lipschitz continuous when $\nu>1$. 

Finally, for case (4) where $f(x)=e^{\|x\|/r}$, we have  $\log(f(x))=\|x\|/r$, which is a standard globally $1/r$-Lipschitz continuous. This completes the proof of the remark.
\end{proof}

\begin{remark}
For completeness and application on the numerical experiment, we also provide a direct argument that quantifies the
RUC bound for case (3) with \( F(\cX) := \max\{L + \|x\|^\nu : x \in \cX\} \) with \( L > 0 \) and \( \nu > 0 \) when $\|x\|\ge1$. Let \( \epsilon > 0 \) and suppose \( d_{\rm H}(\cX, \cY) \leq \delta \). Let \( y^* \in \cY \) be such that \( F(\cY) = L + \|y^*\|^\nu \), and let \( x \in \cX \) satisfy \( \|x - y^*\| \leq \delta \). Then: \[ \left| \|x\| - \|y^*\| \right| \leq \|x - y^*\| \leq \delta. \] Assume without loss of generality \( F(\cX) \leq F(\cY) \), then: \begin{equation} \label{prop:power-1} \max\left\{ \frac{F(\cX)}{F(\cY)}, \frac{F(\cY)}{F(\cX)} \right\} = \frac{F(\cY)}{F(\cX)}\leq \frac{L+\|y^*\|^\nu}{L+\|x\|^\nu}\leq\frac{L+(\|x\|+\delta)^\nu}{L+\|x\|^\nu}. \end{equation} \textbf{(i)} \( 0 < \nu < 1 \). Using the subadditivity of \( r \mapsto r^\nu \), \( (\|x\| + \delta)^\nu \leq \|x\|^\nu + \nu \delta \|x\|^{\nu - 1}. \)\\Therefore, \[ \frac{F(\cY)}{F(\cX)} \leq 1 + \frac{\nu \delta \|x\|^{\nu - 1}}{L + \|x\|^\nu}. \] If \( \|x\| \geq 1 \), then \(\frac{F(\cY)}{F(\cX)} \leq \left(1 + \frac{\delta}{\|x\|}\right)^\nu, \, \text{so} \,\, d_{F}^{\rm rel}(\cX,\cY) \leq \left(1 + \frac{\delta}{\|x\|} \right)^\nu - 1. \) Choosing \( \delta = \|x\|\left((\epsilon + 1)^{1/\nu} - 1\right) \) suffices.\\ \textbf{(ii):} \( \nu \geq 1 \). By the mean value theorem: \( (\|x\| + \delta)^\nu - \|x\|^\nu \leq \nu \delta (\|x\| + \delta)^{\nu - 1}, \) so \[ \frac{F(\cY)}{F(\cX)} \leq 1 + \frac{\nu \delta (\|x\| + \delta)^{\nu - 1}}{L + \|x\|^\nu}. \] If \( \|x\| \geq 1 \), the argument reduces to the same inequality as in the case \( \nu < 1 \), and we can again choose: \[ \delta = \|x\|\left((\epsilon + 1)^{1/\nu} - 1\right). \] $\delta$ can also achieve large values based on $\|x\|$. In all cases, the RUC condition is satisfied, so \( F \) is RUC-regular.
\end{remark}

\subsection{Proof of Example \ref{example:gen-smooth}}
\label{appdx:example:gen-smooth}
\begin{proof}
    First, we want to prove the gradient‐growth bound under generalized smoothness. 
Set \(d = x - \bar x\) and define the scalar function
$g(t) \;=\; \bigl\|\nabla f(\bar x + t\,d)\bigr\|,t\in[0,1].$
By direct computation, we have 
\[
g'(t)
 \le \|\nabla^2 f(\bar x + t\,d)\|\;\|d\|.
\]
By the $\alpha$-generalized smooth assumption, we obtain
\[
g'(t)
\;\le\;\bigl(L_0 + L_1\,g(t)^\alpha\bigr)\,\|d\|
\;=\;L_0\,\|d\| \;+\; L_1\,\|d\|\,g(t)^\alpha.
\]
Integrating from \(0\) to \(t\) yields
\[
g(t)
\;\le\;g(0) \;+\; L_0\,\|d\|\,t
 \;+\; L_1\,\|d\|\int_0^t g(s)^\alpha\,ds.
\]
Because $t\in[0,1]$, we can upper bound the linear term $L_0\|d\|t$ by a constant $L_0\|d\|$. Then we define
\[
h(t) \;=\; g(0) + L_0\,\|d\|
  + L_1\,\|d\|\int_0^t g(s)^\alpha\,ds,
\]
so that \(g(t)\le h(t)\) for all \(t\in[0,1]\). Compute the gradient of $h$, we obtain
\[
h'(t) \;=\; L_1\,\|d\|\,g(t)^\alpha
\;\le\;L_1\,\|d\|\,h(t)^\alpha.
\]
Consider \(u(t)=h(t)^{1-\alpha}\).  Then $
u'(t)
=(1-\alpha)h'(t)h(t)^{-\alpha}
\le(1-\alpha)L_1\|d\|.
$
Consequently, 
\[
h(1)^{1-\alpha}
\;=\;u(1)
\;\le\;u(0)+(1-\alpha)\,L_1\,\|d\|
=\bigl[g(0)+L_0\|d\|\bigr]^{1-\alpha} + (1-\alpha)L_1\|d\|.
\]
With \(\|\nabla f(x)\| = g(1)\leq h(1)\) and \(\|d\|=\|x-\bar x\|\), for any reference point \(\bar x\) and all \(x\in\RR^d\), we have 
\[
\|\nabla f(x)\|
\;\le\;
\Bigr(\Bigl[\|\nabla f(\bar x)\| + L_0\,\|x-\bar x\|\Bigr]^{1-\alpha}
\;+\;(1-\alpha)\,L_1\,\|x-\bar x\|
\;\Bigr)^{\tfrac1{1-\alpha}}.
\]
Substituting to the Hessian bound of $\alpha$-generalized smoothness assumption, we can get
\begin{align*}
&\|\nabla^2 f(x)\|\;\le\;L_0 \;+\;L_1\,\|\nabla f(x)\|^\alpha \le 
L_0 \;+\;L_1\,\left(\Bigr(\Bigl[\|\nabla f(\bar x)\| + L_0\,\|x-\bar x\|\Bigr]^{1-\alpha}
\;+\;(1-\alpha)\,L_1\,\|x-\bar x\|
\;\Bigr)^{\tfrac1{1-\alpha}}\right)^\alpha\\
&\leq L_0 \;+\;L_1\,\Bigg(2^\frac{\alpha}{1-\alpha}\|\nabla f(\bar x)\| + 2^\frac{\alpha}{1-\alpha} L_0\,\|x-\bar x\|
\;+\;2^\frac{\alpha}{1-\alpha} \Bigr((1-\alpha)\,L_1\,\|x-\bar x\|
\;\Bigr)^{\tfrac1{1-\alpha}}\Bigg)^\alpha\\
& \leq L_0 \;+\;L_1\,\left[2^\frac{\alpha}{1-\alpha}\|\nabla f(\bar x)\| + 2^\frac{\alpha}{1-\alpha}L_0\,\|x-\bar x\|\right]^\alpha
\;+L_1 2^\frac{\alpha^2}{1-\alpha}\,\;\Biggr((1-\alpha)\,L_1\,\|x-\bar x\|
\;\Biggr)^{\frac{\alpha^2}{1-\alpha}}\\
&\leq L_0 \;+\;L_1\,2^\frac{\alpha}{1-\alpha}\|\nabla f(\bar x)\| + L_1\,2^\frac{\alpha^2}{1-\alpha}L_0\,\|x-\bar x\|^\alpha
\;+L_1 2^\frac{\alpha^2}{1-\alpha}\,\;\Biggr((1-\alpha)\,L_1\,\|x-\bar x\|
\;\Biggr)^{\frac{\alpha}{1-\alpha}}\\
&\overset{(i)}{\leq} 
L_0 \;+\;L_1\,2^\frac{\alpha}{1-\alpha}\|\nabla f(\bar x)\| + L_1\,2^\frac{\alpha^2}{1-\alpha}L_0+L_1\,2^\frac{\alpha^2}{1-\alpha}L_0\,\|x-\bar x\|^{\frac{\alpha}{1-\alpha}}
\;+L_1 2^\frac{\alpha^2}{1-\alpha}\,\;\Biggr((1-\alpha)\,L_1\,\|x-\bar x\|
\;\Biggr)^{\frac{\alpha}{1-\alpha}}\\
& \leq A_0(\bar{x})+A_1(\bar{x})\|x-\bar{x}\|^{\frac{\alpha}{1-\alpha}},
\end{align*}
where $(i)$ is from that $\|x-\bar x\|^\alpha \leq 1+\|x-\bar x\|^{\frac{\alpha}{1-\alpha}}$
and 
\[
\begin{aligned}
A_0(\bar{x}) &:= L_0 
+ L_1\,2^{\tfrac{\alpha}{1-\alpha}}\,\|\nabla f(\bar{x})\| 
+ L_1\,2^{\tfrac{\alpha^2}{1-\alpha}}\,L_0, \\
A_1(\bar{x}) &:= L_1\,2^{\tfrac{\alpha^2}{1-\alpha}}\,L_0 
+ L_1\,2^{\tfrac{\alpha^2}{1-\alpha}}\,\left((1-\alpha)\,L_1\right)^{\tfrac{\alpha}{1-\alpha}}.
\end{aligned}
\]
This completes the proof.
\end{proof}

\section{Proof of Section \ref{section:cvg-analysis}}
\subsection{Proof of Lemma \ref{lemma:stag-descent}}
\label{appdx:lemma:stag-descent}
\begin{proof}     Note that by Assumption \ref{assumption:RMSS} and the definition of $\bL(\cdot)$, the standard argument leads to  
$$
f(\bbxte)\leq f(\bbxt) + \langle \nabla f(\bbxt), (\bbxte)^\top-(\bbxt)^\top\rangle + \frac{\bL([\bbxt,\bbxte])}{2}\|\bar{\mathbf{x}}^{t}-\bar{\mathbf{x}}^{t+1}\|^2.
$$
By Remark \ref{remark:Stag-point-membership}, we know the line segment $[\bbxt,\bbxte]\subseteq\cX_i^{l(i,t)}$ for any $i\in[m]$. Consequently, we know $\bL([\bbxt,\bbxte])\leq L_{\min}^t:=\min_i L_i^t$. Using the fact that $\bbxte = \bbxt - (\bone/m)^\top\ctdb(\byt)$, we obtain
\begin{eqnarray}
\label{lm:stag-descent-1}
f(\bbxte) &\leq& f(\bbxt) + \frac{1}{m}\left\langle \bone\nabla f(\bbxt)^\top - \byt + \byt, -\ctdb(\byt)\right\rangle + \frac{L_{\min}^t}{2}\left\|(\bone/m)^\top\ctdb(\byt)\right\|^2 \\
& \leq & f(\bbxt)  -  \frac{(2\theta-1)L_{\min}^t}{2m} \left\|\ctdb(\byt)\right\|_F^2 + \frac{1}{m}\left\langle \bone\nabla f(\bbxt)^\top - \byt , -\ctdb(\byt)\right\rangle  \nonumber
\end{eqnarray} 
where the last inequality is due to the fact that $\beta_i^t = \theta L_i^t$, and $\langle\mathcal{T}_\beta^\delta(v),v\rangle \geq \beta\|\mathcal{T}_\beta^\delta(v)\|^2$ for any $\beta,\delta>0$ and any vector $v\in\RR^d$.  In the above inequality, the last term can be decomposed as 
\begin{eqnarray} \label{lm:stag-descent-2}
&&\frac{1}{m}\left\langle\bone\nabla f(\bbxt)^\top - \byt , -\ctdb(\byt)\right\rangle \\
&=& \underbrace{\frac{1}{m}\left\langle \bone\nabla f(\bbxt)^\top - \bone(\omega^t)^\top , -\ctdb(\byt)\right\rangle}_{T_1} 
+ \underbrace{\frac{1}{m}\left\langle\bone(\omega^t)^\top - \bone\bbyt , -\ctdb(\byt)\right\rangle}_{T_2} + \underbrace{\frac{1}{m}\left\langle \bone\bbyt - \byt , -\ctdb(\byt)\right\rangle}_{T_3}. \nonumber
\end{eqnarray}  
For the term $T_1$, we have 
\begin{align*}
   T_1
   & = \frac{1}{m}\sum_{i=1}^m\left \langle \nabla f_i(\bar{\mathbf{x}}^{t})^\top- \nabla f_i(x_i^t)^\top, - (\bone/m)^\top\ctdb(\byt)\right\rangle \\
   & \leq\frac{1}{m}\sum_{i=1}^m\frac{1}{2L_{\min}^t}\left\|\nabla f_i(\bar{\mathbf{x}}^{t})- \nabla f_i(x_i^t)\right\|^2+ \frac{L_{\min}^t}{2} \left\|(\bone/m)^\top\ctdb(\byt)\right\|^2 \\
   &\leq \sum_{i=1}^m\frac{\bL([\xit,(\bbxt)^\top])^2}{2mL_{\min}^t}\|(\bar{\mathbf{x}}^t)^\top-x_i^t\|^2+ \frac{L_{\min}^t}{2m}\left\|\ctdb(\byt)\right\|_F^2\\
   &\overset{(i)}{\leq}\frac{L_{\text{min}}^t}{2m}\|\mathbf{1}\bar{\mathbf{x}}^t-\mathbf{x}^t\|^2_F+\frac{L^t_{\min}}{2m}\left\|\ctdb(\byt)\right\|^2_F,
\end{align*} 
where (i) is due to Remark \ref{remark:Stag-point-membership}, which states that the line segment $[\xit,(\bbxt)^\top]\subseteq\cX_j^{l(j,t)}$ for any $j\in[m]$. Thus $\bL([\xit,(\bbxt)^\top])\leq L^t_{\min}, \forall i\in[m]$. 
Next,  we bound the term $T_2$ by
\begin{eqnarray}
    T_2  &=&  \left \langle \omega^t - (\bbyt)^\top, - (\bone/m)^\top\ctdb(\byt)\right\rangle\nonumber\\
    &\leq& \frac{1}{2 L_{\min}^t}\left\|\omega^t-(\bbyt)^\top\right\|^2 + \frac{L_{\min}^t}{2m}\left\|\ctdb(\byt)\right\|^2_F. \nonumber
\end{eqnarray}
Finally, we bound the term $T_3$ by 
\begin{align*}
    T_3 \leq \frac{1}{2mL_{\min}^t}\left\|\bone\bbyt - \byt\right\|^2_F + \frac{L_{\min}^t}{2m}\left\|\ctdb(\byt)\right\|_F^2.
\end{align*}
Combining \eqref{lm:stag-descent-1}, \eqref{lm:stag-descent-2}, and the bounds on $T_1,T_2,T_3$, we prove \eqref{lm:stag-descent-main}.
\end{proof}

\subsection{Proof of Lemma \ref{lemma:stag-descent-gradmap}}
\label{appdx:lemma:stag-descent-gradmap}

\begin{proof}
First, by Young's inequality: 
\begin{align}
\label{lm:GrdMap-error-1}
&\left\|\ctdb\big(\bone\nabla f(\bbxt)^\top\big)\right\|_F^2 \,\, \leq  \,\,4\left\|\ctdb\big(\byt\big)\right\|_F^2 + \frac{4}{3}\left\|\ctdb\big(\bone\nabla f(\bbxt)^\top\big) - \ctdb\big(\byt\big)\right\|_F^2\nonumber\\
& \quad\overset{(i)}{\leq} \,\, 4\left\|\ctdb\big(\byt\big)\right\|_F^2 + \frac{4}{3(\theta L_{\min}^t)^2} \left\|\bone\nabla f(\bbxt)^\top- \byt\right\|_F^2\\
& \quad\leq \,\, 4\left\|\ctdb\big(\byt\big)\right\|_F^2 + \frac{4}{(\theta L_{\min}^t)^2} \Big(\left\|\bone\nabla f(\bbxt)^\top-\bone(\omega^t)^\top\right\|_F^2 + \left\|  \bone(\omega^t)^\top - \bone\bbyt\right\|_F^2 + \left\|\bone\bbyt - \byt\right\|_F^2\Big)\nonumber\\
& \quad\!\overset{(ii)}{\leq}  \, 4\left\|\ctdb\big(\byt\big)\right\|_F^2 + \frac{4}{(\theta L_{\min}^t)^2} \Big((L_{\min}^t)^2\big\|\bone\bbxt-\bxt\big\|_F^2 + m\left\|   (\omega^t)^\top -  \bbyt\right\|_F^2 + \left\|\bone\bbyt - \byt\right\|_F^2\Big).\nonumber
\end{align} 
In (i), for each row $i$, $\cT_{\beta_i^t}^\delta(v)$ is projecting the vector $v/\beta_i^t$ to the ball $B(0,\delta)$. Then one can utilize the non-expansive property of projection operator to obtain the bound. And (ii) is because
\begin{align*}
& \left\|\bone\nabla f(\bbxt)^{\top}-\bone(\omega^t)^{\top}\right\|_F^2 = \frac{1}{m}\Big\|\sum_{i=1}^m\nabla f_i(\bbxt)-\nabla f_i(\xit)\Big\|^2\\    
&\qquad\qquad\qquad\quad\leq  \sum_{i=1}^m\!\big\| \nabla f_i(\bbxt)-\nabla f_i(\xit)\big\|^2 
\leq  (L_{\min}^t)^2\big\|\bone\bbxt-\bxt\big\|_F^2,
\end{align*} 
where the use of $L_{\min}^t$ follows the same logic of bounding $T_1$ in the proof of Lemma \ref{lemma:stag-descent}, see Appendix \ref{appdx:lemma:stag-descent}. Denote $P_\delta(\cdot)$ the projection to a $\delta$-radius ball, then 
$$\gamma\big\|\ctdbi(\nabla f(\bbxt))\big\|\overset{(i)}{\geq} \frac{\beta_i^t}{\beta_{\min}^t}\big\|\ctdbi(\nabla f(\bbxt))\big\| = \frac{\beta_i^t}{\beta_{\min}^t}\big\|P_\delta(\nabla f(\bbxt)/\beta_i^t)\big\|^2\overset{(ii)}{\geq} \big\|\cT_{\beta_{\min}^t}^\delta(\nabla f(\bbxt))\big\|,$$
where $\gamma = \left\{2,\frac{1+\eta}{2\eta}\right\}$ is defined in \eqref{eqn:ruc-r0} for RUC property, (i) is by Proposition \ref{proposition:stager-dist-bound}, and (ii) is because $P_\delta$ is a projection and the fact that $\beta_i^t/\beta_{\min}^t\geq1$. Substituting this bound to \eqref{lm:GrdMap-error-1} proves that 
\begin{align}
\label{lm:GrdMap-error-main}
\frac{m}{\gamma^2}\big\|\cT_{\beta_{\min}^t}^\delta(\nabla f(\bbxt))\big\|^2 \leq & \,\,\, 4\left\|\ctdb\big(\byt\big)\right\|_{F}^2 + \frac{4(L_{\min}^t)^2\big\|\bone\bbxt-\bxt\big\|_{F}^2}{(\theta L_{\min}^t)^2} \\
& + \frac{4}{(\theta L_{\min}^t)^2} \Big(m\left\|(\omega^t)^{\top} -  \bbyt\right\|_{F}^2 + \left\|\bone\bbyt - \byt\right\|_{F}^2\Big).  \nonumber
\end{align}  
Based on Lemma \ref{lemma:stag-descent}, we can take the weighted sum \eqref{lm:stag-descent-main}$+$\eqref{lm:GrdMap-error-main}$\times\frac{(\theta-2)L_{\min}^t}{8m}$ to obtain
\begin{eqnarray}
\label{lm:stag-descent-5}
f(\bbxte) &\leq& f(\bbxt)  -  \frac{(\theta-2)L_{\min}^t}{8\gamma^2} \left\|\ctdb(\nabla f(\bbxt))\right\|_F^2  -  \frac{(\theta-2)L_{\min}^t}{2m}\left\|\ctdb(\byt)\right\|_F^2 \\
&& +  \Big(\frac{L_{\text{min}}^t}{2m} + \frac{(\theta-2)L_{\min}^t}{2m\theta^2}\Big)\|\mathbf{1}\bar{\mathbf{x}}^t-\mathbf{x}^t\|^2_F  
 + \Big(\frac{1}{2mL_{\min}^t} + \frac{\theta-2}{2m\theta^2L_{\min}^t}\Big)\|\bone\bbyt - \byt\|^2_F\nonumber \\
 & &+ \Big(\frac{1}{2 L_{\min}^t}+\frac{\theta-2}{2\theta^2 L_{\min}^t}\Big)\|\omega^t-(\bbyt)^\top\|^2   \nonumber. 
\end{eqnarray} 
Then using the fact that $\max\left\{\frac{1}{2}+\frac{\theta-2}{2\theta^2}:\theta>2\right\} = 9/16$, we prove the lemma. 
\end{proof}

\subsection{Proof of Lemma \ref{lemma: potential descent 2}}
\label{appdx:lemma:potential}
\begin{proof}
The proof of this lemma consists of several steps. Let us go through them step by step. \vspace{0.2cm}

\noindent{\bf Step 1: Establishing contractive consensus error bound.}  Note that for arbitrary sequence $\bvte = \bW\bvt + {\bf u}^t$ with the matrix $\bW$ satisfying Assumption \ref{assumption:mix-spectral}, it is a standard result in distributed optimization that 
\begin{eqnarray}
\label{eqn:mixing}
\big\|\bvte-\bone\bbvte\big\|_F^2 &=& \big\|\bW\bvt+{\bf u}^t - \bone(\bbvt+\bar{{\bf u}}^t)\big\|_F^2\\
& \leq & (1+\zeta)\big\|\bW\bvt\! - \bone \bbvt\big\|_F^2 + \Big(1\!+\!\frac{1}{\zeta}\Big)\big\|{\bf u}^t \!- \bone\bar{{\bf u}}^t\big\|_F^2\nonumber\\
& \leq & (2-\eta)\eta^2\big\|\bvt - \bone \bbvt\big\|_F^2 + \frac{2-\eta}{1-\eta}\big\|{\bf u}^t - \bone\bar{{\bf u}}^t\big\|_F^2\nonumber\\
& \leq & \eta\big\|\bvt\! - \bone \bbvt\big\|_F^2 + \frac{2}{1-\eta}\big\|{\bf u}^t \big\|_F^2\nonumber 
\end{eqnarray} 
where we select $\zeta = 1-\eta$ and use the fact that $\|{\bf u}^t\! - \bone\bar{{\bf u}}^t\|_F^2\leq \|{\bf u}^t\|_F^2$ for arbitrary ${\bf u}^t$. Applying this inequality to the $\bxt$ sequence, using the Proposition \ref{proposition:stager-dist-bound}, and then taking expectation gives 
\begin{equation}
\label{lm:potential-1}
\EE\big[L_{\min}^{t+1}\|\bxte-\bone\bbxte\|^2_F\big]\leq \EE\Big[\eta\gamma L_{\min}^t\big\|\bxt\! - \bone \bbxt\big\|_F^2 + \frac{2\gamma L_{\min}^t}{1-\eta}\big\|\ctdb(\byt) \big\|_F^2\Big].
\end{equation}
Similarly, applying this inequality to the $\byt$ sequence and using Proposition \ref{proposition:stager-dist-bound} gives 
\begin{equation}
\label{lm:potential-2}
\frac{\|\byte-\bone\bbyte\|^2_F}{L_{\min}^{t+1}} \leq \frac{\eta\gamma\|\byt-\bone\bbyt\|_F^2}{L_{\min}^{t}} + \frac{2\gamma\|\bGte- \bGt\|_F^2}{(1-\eta)L_{\min}^t}.
\end{equation}
Next, let us bound the last term by bounding each row of the difference $\bGte- \bGt$. In case agent $i$ does not start a new epoch in the next iteration $t+1$, then by Line 10 of \texttt{CGTVR-stag}, we have
\begin{align*}
    \frac{\|{g}_i^{t+1}\!-g_i^t\|^2}{L_{\min}^t} &= \frac{\|\frac{1}{B_i}\!\!\sum_{j\in\cB_i^t}\!(\nabla\fij(\xite)-\nabla\fij(\xit)\!)\|^2}{L_{\min}^t} \\
    &\leq \frac{\bL([\xit,\xite])^2}{L_{\min}^t}\|x_i^{t+1}\!-x_i^{t}\|^2\leq L_{\min}^t\|x_i^{t+1}\!-x_i^{t}\|^2 
\end{align*}
where the replacement of $\bL([\xit,\xite])$ by $L_{\min}^t$ is due to Remark \ref{remark:Stag-point-membership}. In case agent $i$ does start a new epoch in the next time step $t+1$, then $g_i^{t+1} = \nabla f_i(x_i^{t+1})$ due to Line 6 of \texttt{CGTVR-stag}. Consequently, the following inequality holds that 
\begin{eqnarray}
\frac{\|\gite-\git\|^2}{L_{\min}^t}  &\leq&  \frac{2\|\nabla f_i(\xite)-\nabla f_i(\xit)\|^2}{L_{\min}^t} 
+ \frac{2\|\nabla f_i(\xit) - \git\|^2}{L_{\min}^t}\nonumber \\
&\leq & 2L_{\min}^t\|\xite\!-\xit\|^2  
+ \frac{2\|\nabla f_i(\xit) - \git\|^2}{L_{\min}^t}.\nonumber
\end{eqnarray}  
Comparing the upper bounds in two cases, find that the above bound holds no matter the agent $i$ starts a new epoch at time $t+1$ or not. Therefore, we can sum up the above bound over $i$ and obtain
\begin{equation}
\label{lm:potential-3}
\frac{2\gamma\|\bGte- \bGt\|_F^2}{(1-\eta)L_{\min}^t} \leq \frac{4\gamma L_{\min}^t}{1-\eta}\|\bxte\!-\bxt\|^2_F  
+ \sum_{i=1}^m\frac{4\gamma\|\nabla f_i(\xit) - \git\|^2}{(1-\eta)L_{\min}^t},
\end{equation}
where the $i$-th term in the last summation error term can be bounded in expectation by Lemma \ref{lemma:bound variance} because it can be viewed as a special case of this lemma with only $m=1$ agent. Therefore, combining \eqref{lm:potential-2} and \eqref{lm:potential-3}, taking expectation on both sides, and then utilizing the bound in Lemma \ref{lemma:bound variance} gives 
\begin{align}
\label{lm:potential-4}
\EE\bigg[\frac{\|\byte - \bone\bbyte\|^2_F}{L_{\min}^{t+1}}\bigg]   \leq & \,\,\,\EE\bigg[\frac{\eta\gamma\|\byt -\bone\bbyt\|_F^2}{L_{\min}^{t}} + \frac{4\gamma L_{\min}^t}{1-\eta}\|\bxte-\bxt\|^2_F\bigg]\\
& + \EE\bigg[\frac{4\gamma^2}{1-\eta}\sum_{i=1}^m \sum_{r=l(i,t)}^{t-1}\frac{L_{\min}^{r}}{B_i}\left\|x_i^{r+1}-x_i^{r}\right\|^2\bigg].\nonumber
\end{align} 
Hence we finish the first step of establishing contractive consensus error bound. \vspace{0.25cm}

\noindent \textbf{Step 2: Telescoping potential function difference.} By definition of $P_t$,  direct computation gives
\begin{eqnarray}
P_{t+1}-P_t & = & \EE\Big[f(\bbxte)-f(\bbxt)\Big] +   \alpha_2\EE\bigg[\frac{\|\byte-\bone\bbyte\|_F^2}{L_{\min}^{t+1}}- \frac{\|\byt-\bone\bbyt\|_F^2}{L_{\min}^{t}}\bigg] \nonumber\\
& & + \alpha_1\EE\Big[L_{\min}^{t+1}\|\bxte-\bone\bbxte\|_F^2- L_{\min}^{t}\|\bxt-\bone\bbxt\|_F^2\Big] \nonumber.
\end{eqnarray}
In the above inequality, let us bound the first term by Lemma \ref{lemma:stag-descent-gradmap} and Lemma \ref{lemma:bound variance}, bound the second term by \eqref{lm:potential-4}, bound the third term by \eqref{lm:potential-1}, 
we obtain 
\begin{eqnarray}
\label{lm:potential-5}
P_{t+1}-P_t & \leq & - \EE\bigg[\frac{(\theta-2)L_{\min}^t}{8\gamma^2}\big\|\cT_{\beta_{\min}^t}^\delta(\nabla f(\bbxt)\big\|^2 + \Big(\frac{\theta-2}{2m}-\frac{2\alpha_1\gamma}{1-\eta}\Big)L_{\min}^t\big\|\ctdb(\byt)\big\|_F^2\bigg] \\
& & -\EE\bigg[\Big((1-\eta\gamma)\alpha_1-\frac{9}{16m}\Big)L_{\min}^t\|\bxt-\bone\bbxt\|_F^2 + \Big((1-\eta\gamma)\alpha_2-\frac{9}{16m}\Big)\frac{\|\byt-\bone\bbyt\|_F^2}{L_{\min}^t}\bigg]\nonumber\\
& & + \EE\bigg[\frac{4\alpha_2\gamma L_{\min}^t}{1-\eta}\|\bxte-\bxt\|_F^2 + \Big(\frac{9\gamma}{16m}+\frac{4\alpha_2\gamma^2}{1-\eta}\Big)\sum_{i=1}^m \!\sum_{r=l(i,t)}^{t-1}\!\!\frac{L_{\min}^{r}}{B_i}\!\left\|x_i^{r+1}-x_i^{r}\right\|^2\bigg] \nonumber.
\end{eqnarray} 
Note that $\gamma = \min\big\{2,\frac{1+\eta}{2\eta}\big\}\leq 2$ and $1-\gamma\eta\geq \frac{1-\eta}{2}>0$, taking telescopic sum of \eqref{lm:potential-5}, we obtain  
\begin{align}
\label{lm:potential-6}
& P_{T}\!-\!P_0  \!\leq\!  -\! \sum_{t=0}^{T-1}\EE\bigg[\frac{(\theta\!-\!2)L_{\min}^t}{32}\big\|\cT_{\beta_{\min}^t}^\delta(\nabla f(\bbxt))\big\|^2 \!+\! \Big(\frac{\theta\!-\!14}{2m}\!-\!\frac{4\alpha_1}{1-\eta}\!-\!\frac{120\alpha_2}{1-\eta}\Big)L_{\min}^t\big\|\ctdb(\byt)\big\|_F^2\bigg] \\
& \qquad\qquad\,\, -\EE\bigg[\Big(\frac{\alpha_1(1-\eta)}{2} - \frac{25}{4m} - \frac{120\alpha_2}{1-\eta}\Big)L_{\min}^t\|\bxt-\bone\bbxt\|_F^2 + \Big(\frac{(1-\eta)\alpha_2}{2}-\frac{9}{16m}\Big)\frac{\|\byt-\bone\bbyt\|_F^2}{L_{\min}^t}\bigg].\nonumber
\end{align} 
Therefore, if we select the parameters so that 
$$\alpha_1 = \frac{14}{(1-\eta)m} + \frac{600}{(1-\eta)^3m},\quad\qquad\alpha_2 = \frac{5}{2(1-\eta)m},\quad\qquad\theta \geq \theta_0:= 28+\frac{1424}{(1-\eta)^2}+\frac{9600}{(1-\eta)^4},$$
then the coefficients in inequality \eqref{lm:potential-6} satisfies
$$\frac{(1-\eta)\alpha_2}{2}-\frac{9}{16m} = \frac{1}{16m}, \qquad \qquad  \frac{(1-\eta)\alpha_1}{2} - \frac{25}{4m} - \frac{120\alpha_2}{1-\eta} = \frac{3}{4m},$$
$$\frac{\theta-14}{2m}-\frac{4\alpha_1}{1-\eta}-\frac{120\alpha_2}{1-\eta} \geq \frac{\theta}{4m},$$
which proves the lemma. 
\end{proof}

\bibliographystyle{plain}
\bibliography{ref}
\end{document}